\newtheorem{theorem}{Theorem}[section]
\newtheorem*{theorem*}{Theorem}
\newtheorem{corollary}[theorem]{Corollary}
\newtheorem{proposition}[theorem]{Proposition}
\newtheorem{lemma}[theorem]{Lemma}
\theoremstyle{definition}
\newtheorem{definition}[theorem]{Definition}
\theoremstyle{remark}
\newtheorem{remark}[theorem]{Remark}
\DeclareMathOperator{\cC}{\mathcal{C}}
\DeclareMathOperator{\cP}{\mathcal{P}}
\DeclareMathOperator{\cZ}{\mathcal{Z}}
\DeclareMathOperator{\cS}{\mathcal{S}}
\DeclareMathOperator{\cF}{\mathcal{F}}
\DeclareMathOperator{\cK}{\mathcal{K}}
\DeclareMathOperator{\cM}{\mathcal{M}}
\DeclareMathOperator{\cV}{\mathcal{V}}
\DeclareMathOperator{\bI}{\mathbf{I}}
\DeclareMathOperator{\bC}{\mathbf{C}}
\DeclareMathOperator{\bH}{\mathbf{H}}
\DeclareMathOperator{\bF}{\mathbf{F}}
\DeclareMathOperator{\bM}{\mathbf{M}}
\DeclareMathOperator{\bL}{\mathbf{L}}
\DeclareMathOperator{\R}{\mathbb{R}}
\DeclareMathOperator{\RP}{\mathbb{RP}}
\DeclareMathOperator{\Z}{\mathbb{Z}}
\DeclareMathOperator{\N}{\mathbb{N}}
\DeclareMathOperator{\area}{area}
\DeclareMathOperator{\spt}{spt}
\DeclareMathOperator{\dmn}{dmn}
\DeclareMathOperator{\vol}{vol}
\DeclareMathOperator{\Int}{int}
\DeclareMathOperator{\ind}{index}
\DeclareMathOperator{\Ric}{Ric}
\DeclareMathOperator{\Ncat}{\mathcal{N}_1-cat}
\DeclareMathOperator{\cat}{cat}
\DeclareMathOperator{\Span}{span}
\let\c@equation\c@thm
\title[Arbitrarily Large Area]{Existence of minimal hypersurfaces with arbitrarily large area and possible obstructions}
\author{James Stevens}
\author{Ao Sun}
\address{University of Chicago, Department of Mathematics, 5734 S University Ave, Chicago IL, 60637, USA; Current address for A.S: Department of Mathematics, Lehigh University, Chandler-Ullmann Hall, Bethlehem, PA 18015, USA}
\email{jamesstevens@uchicago.edu; aos223@lehigh.edu}
\begin{document}
		\begin{abstract}
		We prove that in a closed Riemannian manifold with dimension between $3$ and $7$, either there are minimal hypersurfaces with arbitrarily large area, or there exist uncountably many stable minimal hypersurfaces. Moreover, the latter case has a very pathological Cantor set structure which does not show up in certain manifolds. Among the applications, we prove that there exist minimal hypersurfaces with arbitrarily large area in analytic manifolds. In the proof, we use the Almgren-Pitts min-max theory proposed by Marques-Neves, the ideas developed by Song in his proof of Yau's conjecture, and the resolution of the generic multiplicity-one conjecture by Zhou.
		
		\medskip
		Keywords: minimal surface; min-max theory
		
		\medskip
		2020 Mathematics Subject Classifications: 53A10
	\end{abstract}
	\maketitle

	\tableofcontents
	
	 \section{Introduction}
	
	Minimal surfaces are critical points of the area functional. They are the fundamental geometric models in the calculus of variations, and they play important roles in the study of geometry, topology, and general relativity. A central study is that of the existence of minimal hypersurfaces in a closed manifold. Yau \cite{Ya} conjectured that there exist infinitely many minimal hypersurfaces in a closed manifold. When the ambient manifold has dimension between $3$ and $7$, this conjecture was recently solved by Song \cite{So}, building on the work of Marques-Neves \cite{MN1}, using the novel Almgren-Pitts min-max theory.
	
	The main purpose of this paper is to provide further delicate information on the space of minimal hypersurfaces. Our main theorem shows that a manifold either admits minimal hypersurfaces with arbitrarily large area, or it carries some very pathological metric. Throughout, $(M^{n+1},g)$ is a closed Riemannian manifold with $3 \leq n+1 \leq 7$, and all minimal hypersurfaces considered will be smooth, closed, and embedded. The dimension restriction is due to the fact that min-max minimal hypersurfaces have nice regularity in these dimensions by Schoen-Simon \cite{SS} and Pitts \cite{Pi}. One version of our main result is the following.
	
	\begin{theorem}\label{thm:main}
		If $(M^{n+1},g)$ is a Riemannian manifold with $3 \leq n+1 \leq 7$, either:
		\begin{itemize}
			\item there exist connected minimal hypersurfaces of arbitrarily large area; or
			\item there exist uncountably many connected stable minimal hypersurfaces of uniformly bounded area and with infinitely many distinct areas.
		\end{itemize}
	\end{theorem}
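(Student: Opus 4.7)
The plan is to argue by contradiction. Assume there is a uniform area bound $A_0$ on all connected minimal hypersurfaces in $(M,g)$; I must then produce uncountably many connected stable minimal hypersurfaces with infinitely many distinct areas. The construction imitates the iterative min-max scheme used in Song's proof of Yau's conjecture, refined by Zhou's multiplicity-one theorem. First, I apply Almgren-Pitts min-max to $(M,g)$ at the first width $\omega_1(M,g)$; after approximating $g$ by a bumpy metric, invoking Zhou's generic multiplicity-one theorem, and passing to a limit via Sharp-Zhou compactness, this yields a multiplicity-one embedded minimal hypersurface $\Sigma_0$. Any stable connected component of $\Sigma_0$ is recorded in our collection; the remaining unstable components separate $M$ into open chambers whose metric completions are compact mean-convex domains with smooth minimal boundary. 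On each such chamber one runs boundary min-max (in the spirit of Li and Marques-Neves-Zhou) to obtain a new multiplicity-one minimal hypersurface strictly inside the chamber. Iterating generates a rooted tree $T$ whose nodes are nested chambers $\Omega_v$, each equipped with a min-max hypersurface $\Sigma_v$ and of strictly decreasing volume along every branch.

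Because every connected component of every $\Sigma_v$ has area at most $A_0$, and since Schoen-Simon-Sharp compactness controls limits with bounded area and index, the key point is that the process cannot terminate along ``most'' branches. Concretely, if at some node $v$ every min-max hypersurface in $\Omega_v$ were stable and already present in our list, then the Liokumovich-Marques-Neves Weyl law
\[
\omega_p(\Omega_v) \sim c_n \vol(\Omega_v)^{n/(n+1)} p^{1/(n+1)} \longrightarrow \infty
\]
would force some min-max family to produce a component of area exceeding $A_0$, contradicting our standing assumption. Consequently $T$ has at least two fresh children at infinitely many nodes, so the set of infinite rays has the cardinality of the continuum. Along each infinite ray, the nested chambers $\Omega_{v_1} \supset \Omega_{v_2} \supset \cdots$ shrink to a measurable set whose essential boundary, via Schoen-Simon compactness, converges to a connected stable minimal hypersurface in $M$ with area bounded by $A_0$. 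Distinct rays produce distinct limits, yielding the desired Cantor-like uncountable family of stable connected minimal hypersurfaces.

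To upgrade this family to one realizing infinitely many distinct area values, I would exploit the fact that along every infinite ray the widths $\omega_1(\Omega_v)$ decrease monotonically to a nonnegative limit which coincides with the area of the stable limit hypersurface, and that across different branching levels these monotone limits must populate infinitely many values (otherwise a pigeonhole together with the Weyl law applied inside a single chamber of smallest volume gives a contradiction). The main obstacle I anticipate is the combinatorial and regularity bookkeeping at each stage of the tree: one must verify that boundary min-max genuinely produces a \emph{new} connected minimal hypersurface in each chamber rather than merely recovering the chamber's boundary or a previously listed component; that Zhou's multiplicity-one property survives the bumpy-metric approximation uniformly in $v$; and, most delicately, that the uncountable family of limits does not collapse onto finitely many area values. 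This last point is precisely where the ``very pathological Cantor set structure'' mentioned in the abstract has to be built, and where I expect the most technical effort to be concentrated.
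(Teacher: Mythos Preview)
Your proposal diverges from the paper's strategy and contains a genuine gap at its very first cutting step. You propose to record the stable components of the width-one hypersurface and then cut $M$ along the \emph{unstable} components, claiming the resulting chambers are ``compact mean-convex domains with smooth minimal boundary.'' But an unstable minimal hypersurface has \emph{expanding} neighborhoods on both sides: the nearby leaves have mean curvature pointing \emph{away} from the hypersurface, i.e.\ \emph{into} the chamber. Hence the chamber boundary is not a barrier, and Song-type min-max on the chamber (or its cylindrical extension) has no reason to produce a minimal hypersurface in the interior; it will typically just detect the boundary itself. The entire iterative tree construction rests on this step, so the scheme does not get off the ground as written. The paper avoids this by cutting only along \emph{contracting} minimal hypersurfaces (necessarily stable), which \emph{do} serve as barriers.

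The paper's actual route is structural rather than iterative: it introduces a trichotomy. If no minimal hypersurface is contracting on a side (the \emph{weakly Frankel} case), a Lusternik--Schnirelmann argument combined with Zhou's multiplicity-one theorem and the Weyl law forces arbitrarily large areas. If cutting along contracting hypersurfaces eventually produces a \emph{weak core} (contracting boundary, no contracting hypersurface inside), Song's cylindrical-end min-max gives arbitrarily large areas. The remaining case---no weak core can ever be reached---forces the existence of \emph{accumulating} stable minimal hypersurfaces, and a further dichotomy (spindle vs.\ non-monotonic) yields either large area or the Cantor-set family of stable hypersurfaces. The uncountability and the infinitely many area values come essentially for free from the definition of ``non-monotonic accumulating'': each such $\Gamma$ is a limit of nearby stable minimal hypersurfaces whose areas oscillate. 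Your anticipated difficulties (showing distinct rays give distinct limits, extracting infinitely many area values) are thus handled not by combinatorics on a tree but by identifying the correct local model near a degenerate stable hypersurface.
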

	
	Note that the second case in the above theorem is very pathological, and should not show up under certain conditions, for example, when the manifold is analytic.
	
	\begin{theorem}\label{thm:analytic}
		If $(M^{n+1},g)$ is analytic with $3 \leq n+1 \leq 7$, then there exist connected minimal hypersurfaces of arbitrarily large area.
	\end{theorem}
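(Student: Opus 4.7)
The approach is to invoke Theorem~\ref{thm:main} and derive a contradiction from its second alternative under the analyticity hypothesis. Suppose the conclusion fails, i.e.\ there is no sequence of connected minimal hypersurfaces with areas tending to infinity. Then Theorem~\ref{thm:main} produces uncountably many connected stable minimal hypersurfaces of area uniformly bounded by some $A<\infty$, realizing infinitely many distinct area values; call this collection $\mathcal{S}$. The plan is to show that when $g$ is real-analytic the area functional attains only finitely many values on $\mathcal{S}$, contradicting this.

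The first step is compactness. By the Schoen-Simon curvature estimates in the range $3\leq n+1\leq 7$, any sequence in $\mathcal{S}$ has a subsequence converging smoothly and graphically (possibly with integer multiplicity) to a smooth embedded stable minimal hypersurface. Hence the closure $\overline{\mathcal{S}}$ in the varifold topology is compact, and every limit point is itself a smooth stable minimal hypersurface of area at most $A$.

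The second step uses analyticity of $g$ to endow the moduli space of minimal hypersurfaces with a local real-analytic structure near any limit $\Sigma_0 \in \overline{\mathcal{S}}$. The mean curvature equation for normal graphs over $\Sigma_0$ is a real-analytic elliptic PDE whose linearization at the zero section is the Jacobi operator $L_{\Sigma_0}$. A Lyapunov-Schmidt reduction parametrizes nearby minimal graphs as the zero set of a real-analytic map $\Phi:U\to\mathrm{coker}\,L_{\Sigma_0}$, where $U$ is a neighborhood of $0$ in the finite-dimensional space $\ker L_{\Sigma_0}$. The germ $\Phi^{-1}(0)$ is thus a real-analytic variety, which by Lojasiewicz decomposes into finitely many irreducible components. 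The induced area function on $U$ is itself real-analytic, and its differential vanishes identically along $\Phi^{-1}(0)$ because every point there corresponds to a critical point of area. By analyticity, area is therefore constant on each irreducible component, and only finitely many area values occur in a neighborhood of $\Sigma_0$.

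The third step combines compactness with this local finiteness: cover $\overline{\mathcal{S}}$ by finitely many such analytic neighborhoods, so that $\mathcal{S}$ achieves only finitely many distinct areas, contradicting the infinitely-many-distinct-areas part of the second alternative. The main technical obstacle is the possibility that the convergence in Step~1 occurs with multiplicity $m>1$ at $\Sigma_0$, in which case nearby elements of $\mathcal{S}$ are not necessarily single normal graphs over $\Sigma_0$. To handle this one must perform the Lyapunov-Schmidt reduction on the appropriate multi-sheeted model (or reduce to the multiplicity-one situation using the stability assumption and recent regularity results for stable varifolds), while preserving real-analyticity and the vanishing of the differential of area along the parameter variety. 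This is where the delicate analysis of the moduli space in the analytic category is concentrated.
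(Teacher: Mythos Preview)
Your approach is correct but takes a different route from the paper. Rather than invoking Theorem~\ref{thm:main}, the paper argues via the structural trichotomy of Sections~\ref{WeaklyFrankelCase}--\ref{AccumlatingCase}: analyticity rules out any minimal hypersurface that is \emph{accumulating} on a side (Corollary~\ref{cor:nonisolatedanalytic}, proved by a one-variable implicit function argument), so $(M,g)$ must be weakly Frankel, contain a weak core, or have stable hypersurfaces of unbounded area, and Theorems~\ref{thm:weaklyFlarge} and~\ref{thm:WC->large_area} finish each case. Your Step~2 in fact collapses to the paper's argument once you observe that every $\Sigma_0\in\overline{\mathcal S}$ is stable (as a smooth limit of stable hypersurfaces), hence $\ker L_{\Sigma_0}$ is at most one-dimensional: the Lyapunov--Schmidt reduction becomes a single real-analytic function $c:(-\varepsilon,\varepsilon)\to\R$, which is either identically zero (a local minimal foliation on which area is constant by first variation) or has isolated zeros, so no {\L}ojasiewicz stratification or irreducible-component analysis is needed. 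Your multiplicity worry is also milder than you state: when $\Sigma_0$ is two-sided, a connected embedded $\Sigma_k$ lying as an $m$-sheeted graph in $\Sigma_0\times(-\varepsilon,\varepsilon)$ is a disjoint union of $m$ ordered graphs, forcing $m=1$; one-sided limits are handled via the double cover as in Section~\ref{one-sided case}. What your route buys is directness once Theorem~\ref{thm:main} is available; what the paper's route buys is that the analytic input stays at the level of one analytic function of one real variable, and one sees precisely which geometric pathology (accumulating sides) analyticity forbids.
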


	In order to state the precise version of our main theorem, we need to introduce a new definition.

	\begin{definition}\label{def:non-monotonic}
		We say a minimal hypersurface $\Gamma \subset (M,g)$ is \emph{non-monotonic} if there exists no $\delta > 0$ along with $\varphi: \Gamma \times [-\delta,\delta]$ such that
		\begin{itemize}
			\item $\varphi(\Gamma \times \{0\}) = \Gamma$,
			\item the mean curvature vector of $\varphi(\Gamma \times \{t\})$ is either everywhere $0$, or non-vanishing, 
			\item $\varphi$ is a diffeomorphism onto its image, and
			\item $\area_g(\varphi(\Gamma \times \{t\}))$ is weakly monotonic on $[-\delta,0]$ and $[0,\delta]$.
		\end{itemize} 
	\end{definition}
	
	 Non-monotonicity is a very pathological behavior of minimal hypersurfaces: the area functional oscillates around a non-monotonic minimal hypersurface. Our main theorem asserts that the only possible obstruction to the existence of minimal hypersurfaces with arbitrarily large area is the space of non-monotonic minimal hypersurfaces is homeomorphic to a Cantor set.
	
	\begin{theorem}\label{thm:main-full}
		Given a closed Riemannian manifold $(M^{n+1},g)$ with $3 \leq n+1 \leq 7$, either:
		\begin{itemize}
			\item there exist connected minimal hypersurfaces of arbitrarily large area; or
			\item the space of non-monotonic accumulating minimal hypersurfaces is homeomorphic to a Cantor set.
		\end{itemize}
	\end{theorem}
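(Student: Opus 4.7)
The plan is to set up a recursive cutting procedure along contracting minimal hypersurfaces and exhaust all possibilities for how it can behave. Starting from $(M,g)$, if some minimal hypersurface is contracting on a side, cut along it and iterate on each resulting component. The analysis then splits according to whether this process terminates after finitely many steps.

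If the process terminates, every terminal piece is either \emph{weakly Frankel} or a \emph{weak core}. For a weakly Frankel piece, I would first show that the set of areas of connected stable minimal hypersurfaces below any fixed threshold is finite, using the foliation structure that the weak Frankel property forces between disjoint minimal hypersurfaces (together with the classical fact that area is constant along a minimal foliation). Next, I would invoke a Lusternik-Schnirelmann strict-jump argument generalizing Aiex's, combined with Zhou's observation that unstable min-max hypersurfaces appear with multiplicity one: if the $p$-width were eventually realized only by multiples of stable hypersurfaces, the finite area set would force $\omega_p$ to grow too slowly to satisfy the Weyl law of Liokumovich-Marques-Neves. Hence some connected unstable component realizes $\omega_p$ for arbitrarily large $p$, and the Weyl law supplies unbounded areas. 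For a weak core, I would adapt Song's volume-spectrum analysis of manifolds with cylindrical ends directly, since our weak core hypothesis is essentially the local structural input Song exploits.

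If the cutting process does not terminate, I distinguish two further subcases. In the \emph{spindle} subcase, where some non-isolated minimal hypersurface is one-sidedly approached by hypersurfaces of strictly larger area, the plan is to use an approximation argument: choose hypersurfaces sufficiently close to the spindle core to produce connected minimal hypersurfaces of unbounded area. Otherwise, the claim is that the space $\mathcal{S}$ of non-monotonic accumulating minimal hypersurfaces is homeomorphic to a Cantor set. I would encode $\mathcal{S}$ as the set of infinite paths in a binary tree, whose nodes at level $k$ correspond to the contracting hypersurfaces used at the $k$th cut and whose two children at each node correspond to the two sides of the cut. Compactness of $\mathcal{S}$ then follows from Schoen-Simon compactness for stable minimal hypersurfaces with uniformly bounded area; total disconnectedness follows from the tree encoding, since distinct infinite paths eventually separate; and perfectness follows because, absent a weak core or spindle, every contracting hypersurface used in the cutting is itself two-sidedly accumulated and therefore not isolated.

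The main obstacle will be verifying perfectness in the Cantor set subcase, i.e.\ ruling out under the non-termination and non-spindle hypotheses that any member of $\mathcal{S}$ is isolated. This demands a careful definition of the class of hypersurfaces being catalogued and a propagation argument that the contracting-versus-expanding dichotomy on the two sides of each cut behaves coherently along the branches of the tree. A secondary technical hurdle is to set up the cutting procedure so that the resulting tree is well-defined and essentially independent of the order in which contracting hypersurfaces are processed; once these organizational issues are settled, the remaining identification of $\mathcal{S}$ with a Cantor set is standard point-set topology.
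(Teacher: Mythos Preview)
Your global trichotomy (weakly Frankel / weak core / accumulating) matches the paper, and your treatment of the first two cases is essentially the paper's. The gaps are in the last two subcases.

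\textbf{Spindle subcase.} The sentence ``choose hypersurfaces sufficiently close to the spindle core to produce connected minimal hypersurfaces of unbounded area'' is not an argument: the nearby leaves have area close to that of the spindle, which is bounded by hypothesis. What the paper actually does is quite different. Around an accumulating spindle $\Gamma$ one first uses level set flow and the maximum principle to manufacture a compact region $U$ with contracting minimal boundary in which every minimal hypersurface intersecting $\Gamma$ is forced either to equal a leaf $\Gamma_t$ or to be unstable. One then perturbs the metric conformally (Song's lemma) so that $\Gamma$ becomes unstable and all $\Gamma_t$ acquire mean curvature pointing away from $\Gamma$; in the perturbed metric every interior minimal hypersurface meets $\Gamma$, so Song's cylindrical min-max produces \emph{connected} hypersurfaces realizing $\widetilde\omega_p$, and Sharp's compactness passes this back to $g$. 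The spindle itself never gives large area; it is only the scaffold that makes the region behave like a weak core for min-max purposes.

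\textbf{Cantor set subcase.} Your perfectness argument is about the wrong objects. You assert that ``every contracting hypersurface used in the cutting is itself two-sidedly accumulated,'' but the space $\mathcal{M}^N$ whose Cantor structure is claimed consists of \emph{non-monotonic} hypersurfaces, which are by definition \emph{not} contracting on either side. Knowing that contracting hypersurfaces accumulate tells you nothing about limit points inside $\mathcal{M}^N$. The paper supplies the missing step via a maximum-area lemma: in any Song region $U$, the element of $\overline{\mathcal{M}^C(U)}$ of largest area is accumulating on one side and not contracting on the other (otherwise one could cut further and produce a weak core). Under the no-spindle hypothesis, extending this hypersurface along its maximal minimal foliation yields a non-monotonic one. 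Perfectness then follows cleanly: given $\Gamma\in\mathcal{M}^N$, its accumulating side contains arbitrarily thin Song regions $U_k$, and the lemma produces $\Gamma_k\in\mathcal{M}^N(U_k)$ converging to $\Gamma$. Your binary-tree encoding, besides the order-independence issue you flag, never establishes that infinite branches correspond to non-monotonic (rather than merely accumulating, or merely limit-of-contracting) hypersurfaces, so both existence and perfectness of $\mathcal{M}^N$ are unproved.
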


In the rest of this introduction, we give some background information, and outline the proof.
	
	\subsection*{Min-max theory}
	Even before Yau's conjecture, Almgren \cite{Al1, Al2} and Pitts \cite{Pi} developed the min-max method to construct at least one closed embedded minimal hypersurface in a Riemannian manifold $(M,g)$. The method is known as the Almgren-Pitts theory. The progress from one to infinitely many was first made by Marques-Neves. In \cite{MN1}, they proved the existence of infinitely many closed embedded minimal hypersurfaces in $(M^{n+1},g)$ with $3\leq n+1\leq 7$ and which satisfies the Frankel property: any two closed embedded minimal hypersurfaces intersect. In particular, manifolds with positive Ricci curvature satisfy the Frankel property. Later, Irie-Marques-Neves \cite{IMN} showed that in a manifold $(M^{n+1},g)$ for $3\leq n+1\leq 7$ with a generic (bumpy) metric, the union of closed embedded minimal hypersurfaces is actually dense in the manifold. Later this result was quantified by Marques-Neves-Song \cite{MNS}. As a consequence, Yau's conjecture for generic metrics was settled.
	
	An important ingredient in \cite{MN1} are estimates of the volume spectrum first shown by Gromov \cite{Gr1} and Guth \cite{Gu}, which show that the min-max widths grow sublinearly. In fact, Gromov \cite{Gr2} conjectured a more precise growth rate called the Weyl law which was later proved by Marques-Neves-Liokumovich \cite{LMN} and played an important role in both \cite{IMN} and \cite{MNS}.
	
	Another approach to Yau's conjecture for generic metrics is proving the multiplicity-one conjecture for Almgren-Pitts min-max theory. This approach was first proposed by Marques-Neves in a series of papers \cite{Ma, MN4, Ne}. In \cite{MN3}, they discussed many nice properties that a multiplicity-one min-max minimal hypersurface would satisfy. The generic multiplicity-one conjecture was recently settled by Zhou in \cite{Zh}, using the ideas from the construction of prescribed mean curvature hypersurfaces by Zhou-Zhu in \cite{ZZ1, ZZ2}. The generic metrics considered in the multiplicity-one conjecture are the bumpy metrics. Recall that a metric is called bumpy if any immersed closed minimal hypersurface is non-degenerate, namely it has no non-trivial Jacobi field. White \cite{Wh} showed that bumpy metrics are generic. This approach was also studied in the Allen-Cahn min-max theory, see \cite{G, GG1, GG2}. The generic multiplicity-one conjecture for Allen-Cahn min-max theory was proved by Chodosh-Mantoulidis \cite{ChM1} for $3$-dimensional manifolds, building on the work of Wang-Wei in \cite{WW}. 
	
	For a general metric which may not be bumpy, both the denseness argument and the multiplicity-one argument fail. In \cite{So}, Song came up with a novel approach which proved the existence of infinitely many closed embedded minimal hypersurfaces in a closed manifold $(M^{n+1},g)$ with an arbitrary metric with $3\leq n+1\leq 7$. This settled Yau's conjecture in full generality. Song introduced a core decomposition and studied the volume spectrum of manifolds with cylindrical ends. These ideas are adapted in our work.
	
	Min-max theory can also be used to construct free boundary minimal hypersurfaces in a manifold with boundary, see \cite{LZ, GMWZ, Wa, SWZ}. We believe that our ideas can also be used to study free boundary minimal hypersurfaces.

	\subsection*{Large area minimal surfaces}
	Even though there exist infinitely many closed embedded minimal hypersurfaces in a closed manifold, it is not clear whether they can have arbitrarily large area. For example, this does not necessarily happen for embedded geodesics in a surface (which we can think as the case $n=2$). Moreover, for a general elliptic variational problem, the critical points may not have arbitrarily large values. For example, in Yang-Mills theory, all the self-dual connections over a  compact oriented $4$-manifold are minimizers to the Yang-Mills functional, and have a fixed value of Yang-Mills functional, see \cite{La}. This fixed value depends on the topology of the $4$-manifold.
	
	From a Morse theoretic point of view, the value of the elliptic functional can reflect the topology of the ambient space. Thus, the existence of minimal hypersurfaces with arbitrarily large area may reflect the topology of the space of minimal hypersurfaces in a closed manifold.
	
	Because of the Weyl law, the minimal hypersurfaces generated by min-max theory will have mass tending to infinity. However, these hypersurfaces coming from min-max theory can have multiple connected components where each component could appear with some multiplicity. But when the ambient metric is bumpy, Chodosh-Mantoulidis \cite{ChM2} utilized the multiplicity-one conjecture to prove there exist connected minimal hypersurfaces with arbitrarily large area. Although a bumpy metric is generic and has many nice properties, we cannot write down any explicit examples of such metrics because checking that a metric is bumpy requires one to know information about all minimal hypersurfaces in a manifold.
	
	Another important quantity studied in min-max theory is the Morse index. Recall that the index of a minimal hypersurface is the number of negative eigenvalues of the linearized operator, with multiplicity counted. Stable minimal hypersurfaces have index $0$. Large area minimal hypersurfaces may not always have large index. For example, Colding-Minicozzi \cite{CoM} constructed embedded stable minimal tori with arbitrarily large area in a closed $3$-manifold. On the other hand, in \cite{Li}, Li used Song's min-max theory and generalized Chodosh-Mantoulidis's work to show that in a manifold $M^{n+1}$ with a bumpy metric and $3\leq n+1\leq 7$, there exist minimal hypersurfaces with both arbitrarily large area and index. 
	
	When $M$ is a $3$-dimensional manifold with positive scalar curvature, Chodosh-Ketover-Maximo \cite{CKM} proved that the area of a closed embedded minimal surface in $M$ is bounded from above in terms of its index. With this result, Theorem \ref{thm:analytic} implies the following consequence.
	
	\begin{corollary}\label{cor:analytic+PSC}
		Given a closed analytic Riemannian manifold $(M^3,g)$ with positive scalar curvature, then there exist connected minimal surfaces of arbitrarily large area and index.
	\end{corollary}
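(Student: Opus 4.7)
The plan is to combine Theorem \ref{thm:analytic} with the area--index estimate of Chodosh--Ketover--Maximo \cite{CKM} quoted in the paragraph preceding the corollary. Since $(M^3,g)$ is analytic of dimension $3$, which lies in the admissible range $3 \le n+1 \le 7$, Theorem \ref{thm:analytic} immediately produces a sequence $\{\Sigma_k\}$ of connected, closed, embedded minimal surfaces with $\area(\Sigma_k) \to \infty$.

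Next I will invoke \cite{CKM}: on a closed $3$-manifold with positive scalar curvature, there is a function $\Lambda \colon \N \to \R_+$ such that every closed embedded minimal surface $\Sigma \subset M$ satisfies $\area(\Sigma) \le \Lambda(\ind(\Sigma))$. In particular, for each fixed $I_0$, the set of closed embedded minimal surfaces with $\ind \le I_0$ has uniformly bounded area.

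I conclude by a direct contrapositive argument: if the indices $\ind(\Sigma_k)$ were uniformly bounded by some $I_0$, then $\area(\Sigma_k) \le \Lambda(I_0)$ for all $k$, contradicting $\area(\Sigma_k) \to \infty$. Hence, after passing to a subsequence, $\ind(\Sigma_k) \to \infty$ as well, yielding connected embedded minimal surfaces with both arbitrarily large area and arbitrarily large index.

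There is essentially no obstacle; the corollary is a one-line combination of the two cited inputs. The only things to verify are that the minimal surfaces furnished by Theorem \ref{thm:analytic} are smooth, closed, and embedded --- which is built into the standing hypotheses of the paper --- and that the bound of \cite{CKM} applies to this same class of minimal surfaces, which it does.
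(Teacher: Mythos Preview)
Your proof is correct and follows exactly the paper's approach: apply Theorem \ref{thm:analytic} to obtain connected minimal surfaces of arbitrarily large area, then invoke \cite[Theorem 1.3]{CKM} to force the indices to be unbounded. The paper's own proof is the same two-line argument, just stated more tersely.
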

	
	Also, as a byproduct of our proof, we have the following corollary.
	
	\begin{corollary}\label{cor:foliation+PSC}
		If $(M^3,g)$ has positive scalar curvature and is foliated by minimal surfaces, there exist connected minimal surfaces of arbitrarily large area and index.
	\end{corollary}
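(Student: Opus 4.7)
The strategy is to reduce to Theorem \ref{thm:main-full} and rule out its pathological second alternative using the foliation together with positive scalar curvature. First, by the theorem of Chodosh--Ketover--Maximo \cite{CKM}, in a closed $3$-manifold of positive scalar curvature the area of any closed embedded minimal surface is bounded above by a function of its Morse index. Consequently, any sequence of connected embedded minimal surfaces with unbounded area automatically has unbounded index, so it suffices to produce connected minimal surfaces of arbitrarily large area.

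Each leaf $\Sigma$ of the foliation is a closed minimal surface that carries a positive Jacobi function $u$, namely the infinitesimal normal velocity of the foliation. Hence the first eigenvalue of the Jacobi operator of $\Sigma$ vanishes; testing the stability inequality against $\phi \equiv 1$, substituting the Gauss equation for a minimal surface in a $3$-manifold, and invoking Gauss--Bonnet reproduces the Fischer-Colbrie--Schoen estimate
\[
2\pi\chi(\Sigma) \;=\; \int_\Sigma K_\Sigma \;\geq\; \tfrac{1}{2}\int_\Sigma\bigl(|A|^2 + R\bigr) \;>\; 0,
\]
forcing each leaf to be a $2$-sphere with $\area(\Sigma) \leq 8\pi/\min_M R$. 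Thus the foliation realizes a smooth $1$-parameter family of stable minimal spheres of uniformly bounded area, and in particular a connected $1$-parameter family inside the space of stable minimal hypersurfaces of $(M,g)$.

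Now I would apply Theorem \ref{thm:main-full}. If we are in its first alternative, the reduction above already yields connected minimal surfaces of arbitrarily large area and index. Otherwise we are in its second alternative, in which (by the more precise Cantor-set structure asserted by the authors) the uncountable collection of stable minimal hypersurfaces witnessing the pathology is totally disconnected. But the leaves of our foliation form a connected $S^1$-family of stable minimal spheres, which cannot be embedded inside any totally disconnected collection of stable minimal hypersurfaces; this contradicts the second alternative, so the first holds and the corollary follows. The principal obstacle I anticipate is making this last incompatibility precise: one has to extract from Theorem \ref{thm:main-full} the exact sense in which the stable minimal hypersurfaces of the pathological case form a Cantor set, and show that this really is inconsistent with a smooth codimension-one foliation by stable minimal spheres rather than merely an uncountable collection into which the foliation's $S^1$-family could, a priori, be absorbed.
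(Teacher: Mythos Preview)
Your reduction via Chodosh--Ketover--Maximo is correct and matches the paper. The gap is in the final paragraph, and it is exactly the obstacle you flagged: the Cantor set in the second alternative of Theorem~\ref{thm:main-full} is the space $\cM^N$ of \emph{non-monotonic} accumulating minimal hypersurfaces, not the space of all stable ones. Your $S^1$-family of foliation leaves does not lie in $\cM^N$: along the foliation each leaf has a normal parametrization by minimal slices, so the area is constant (hence weakly monotone) on both sides, and the leaves are therefore \emph{not} non-monotonic. The connected family and the Cantor set can simply be disjoint subsets of $\cM^S$, so no contradiction arises from total disconnectedness alone.

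The paper closes this gap by a different and more direct route: it shows that $(M,g)$ is weakly Frankel and then invokes Theorem~\ref{thm:weaklyFlarge}, bypassing Theorem~\ref{thm:main-full} entirely. The key extra ingredient is Lemma~\ref{lem:minimalfoliations}: any closed minimal surface in a minimally foliated manifold is either a leaf of the foliation, or intersects every leaf and has infinite fundamental group. Since positive scalar curvature forces every stable minimal surface to be a sphere or $\RP^2$ (Schoen--Yau \cite{SY}), the second option is impossible for stable surfaces; hence every stable minimal surface is a leaf, and in particular is locally foliated by minimal hypersurfaces rather than contracting on a side. This gives weakly Frankel. Note that the same ingredients would also repair your argument (if every stable surface is a leaf then $\cM^N=\emptyset$, which is not a Cantor set), but at that point you have essentially reproduced the paper's proof.
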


For instance, one application of this is the following.

\begin{corollary}
	The following manifolds admit minimal surfaces with arbitrarily large area and genus: 
	\begin{itemize}
		\item $S^2\times S^1$ with the product metric and $S^2$ carries the standard sphere metric;
		\item More generally, $S^2\times S^1$ with the product metric and $S^2$ carries any smooth metric with positive curvature.
	\end{itemize}
	
\end{corollary}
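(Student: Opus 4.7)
The plan is to invoke Corollary~\ref{cor:foliation+PSC} and then upgrade the conclusion from unbounded index to unbounded genus via a compactness argument tailored to $S^2 \times S^1$.

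First I would verify the hypotheses of Corollary~\ref{cor:foliation+PSC} for $M = S^2 \times S^1$ with the product metric $g = g_0 + dt^2$, where $g_0$ is a smooth metric of positive Gaussian curvature on $S^2$. Scalar curvature is additive for Riemannian products, so $R_g = 2K_{g_0} > 0$ and $(M,g)$ has positive scalar curvature. The slices $S^2 \times \{t\}$ are totally geodesic by the product structure (the normal field $\partial_t$ is parallel in $M$) and foliate $M$. Corollary~\ref{cor:foliation+PSC} then yields a sequence of connected embedded minimal surfaces $\Sigma_k \subset M$ with $\area(\Sigma_k) \to \infty$ and $\ind(\Sigma_k) \to \infty$.

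To upgrade to unbounded genus, I would argue by contradiction: suppose $\operatorname{genus}(\Sigma_k) \leq G$ along a subsequence. I would then show that connected embedded minimal surfaces in $M$ of genus at most $G$ have uniformly bounded area, contradicting $\area(\Sigma_k) \to \infty$. The approach is to apply Colding--Minicozzi's lamination theorem to extract subsequential varifold convergence $\Sigma_{k_j} \to V$, where $V$ is supported on a minimal lamination $\mathcal{L}$ of $M$. Leaves of $\mathcal{L}$ along which the convergence has multiplicity at least two are stable by a one-sided second variation argument; testing the second variation against the constant function and using $R_g > 0$ forces each such stable leaf to have positive Euler characteristic, hence to be a topological $2$-sphere of area bounded in terms of $\min R_g$. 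The Fischer--Colbrie--Schoen classification of complete stable minimal surfaces in ambient $\Ric \geq 0$, together with the totally geodesic slice foliation of $M$, forces each such $2$-sphere to coincide with some slice $S^2 \times \{t\}$. Hence $\mathcal{L}$ is a sublamination of the slice foliation with uniformly bounded total mass, and bounded genus of the $\Sigma_{k_j}$ constrains the multiplicity of convergence and the number of necks joining sheets into a single connected component, yielding the uniform area bound.

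The main obstacle is this last compactness step in the borderline $\Ric \geq 0$ setting. Bounded-genus compactness of embedded minimal surfaces is classical in strictly positive Ricci (Choi--Schoen), but here one must rely on Colding--Minicozzi's multi-valued graph / lamination machinery and a careful multiplicity analysis, using crucially that PSC forces any limiting stable leaf to be a slice sphere, so that bounded genus rules out the multiplicity of convergence from blowing up and therefore keeps the area of the approximating surfaces bounded.
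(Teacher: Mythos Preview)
The paper does not supply a proof of this corollary; it is stated as an immediate application of Corollary~\ref{cor:foliation+PSC}. Your first step---checking that $(S^2\times S^1, g_0+dt^2)$ has positive scalar curvature and is foliated by the totally geodesic slices, then invoking Corollary~\ref{cor:foliation+PSC} to obtain connected minimal surfaces with $\area\to\infty$ and $\ind\to\infty$---is exactly what the paper intends, and is correct.

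The gap is in your upgrade from unbounded index to unbounded genus. The crucial assertion that ``bounded genus of the $\Sigma_{k_j}$ constrains the multiplicity of convergence and the number of necks'' is not justified, and a naive handle count in fact goes the wrong way: if a connected embedded surface is modeled on $k$ parallel sphere sheets joined by $m$ catenoidal necks, its Euler characteristic is $2k-2m$, so genus $g=m-k+1$. Fixing $g$ only forces $m=k+g-1$; it places \emph{no} upper bound on $k$. Thus a genus bound alone does not rule out multiplicity (and hence area) blowing up in the lamination limit. Your outline does not explain what extra input---a no-neck lemma in the degenerate-stable limit, a monotonicity argument in the cover $S^2\times\R$, or a direct ``bounded genus $\Rightarrow$ bounded area'' theorem in this product---closes this. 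You flag this yourself as ``the main obstacle,'' and it remains one: as written the argument does not establish the genus conclusion. If you want to complete it, you will need either a result from the literature giving an a priori area bound for bounded-genus embedded minimal surfaces in $S^2\times S^1$ (the Choi--Schoen theorem does not apply since $\Ric\geq 0$ is not strict), or a bespoke argument exploiting the cover $S^2\times\R$ together with the fact (Lemma~\ref{lem:minimalfoliations}) that every non-slice minimal surface wraps nontrivially around the $S^1$ factor.
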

	
	\subsection*{Outline of proof}
	The main idea is trying to cut the manifold into small pieces. This idea has been used by Song in \cite{So}. Nevertheless, Song's proof of Yau's conjecture used a contradictory argument so that the cutting process only involves in finitely many minimal hypersurfaces. In our case, the cutting process can get complicated since there may be infinitely many minimal hypersurfaces to cut along.
	
	\begin{figure}[h!]
		\includegraphics[width=0.8\textwidth]{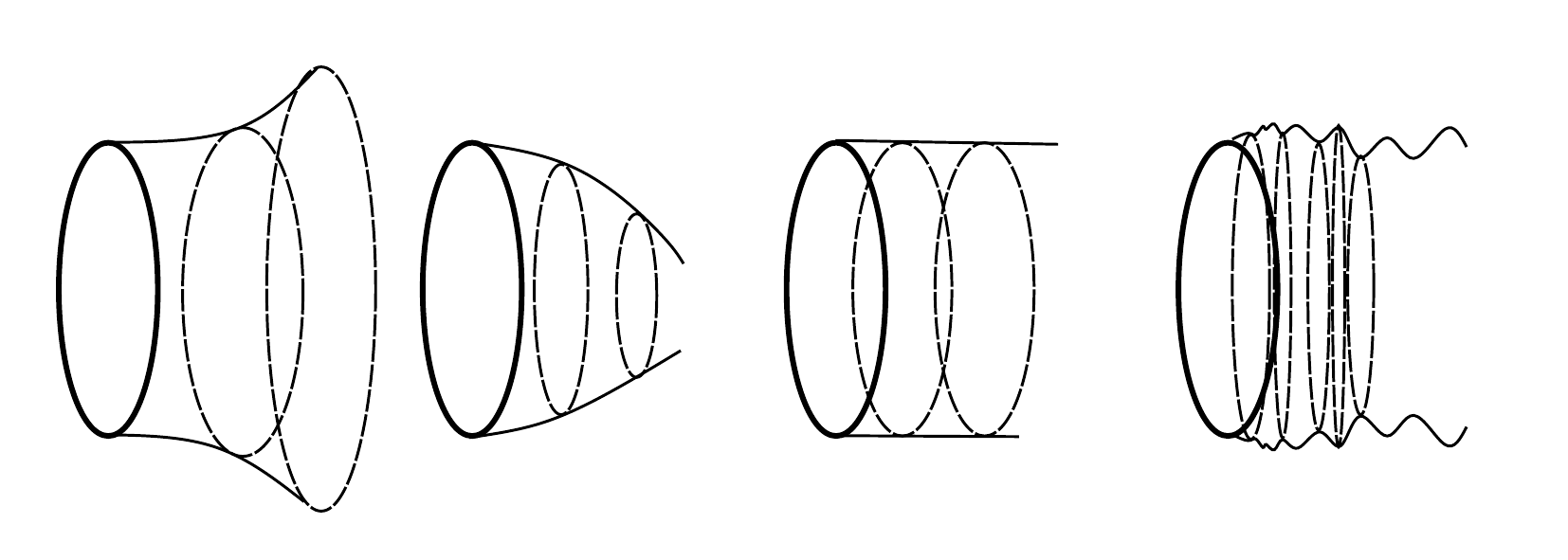}
		\caption{From the left to right: contracting neighborhood, expanding neighborhood, locally the neighborhood is foliated by embedded minimal hypersurfaces, and accumulating neighborhood.} \label{fig:nhbd}
	\end{figure}

	The implicit function theorem shows that a neighborhood of an embedded minimal hypersurface can be either contracting, expanding, locally foliated by minimal hypersurfaces, or accumulated by minimal hypersurfaces. The precise definitions of these concepts will be given in Definition \ref{def:4nbhds}. We refer the readers to Figure \ref{fig:nhbd} for the pictures of these neighborhoods. We will cut the manifold along contracting minimal hypersurfaces. There are several cases.

	\subsubsection*{Weakly Frankel case} The first possibility is that there is no minimal hypersurface which is contracting on a side. In this case, we say the manifold $(M,g)$ is \emph{weakly Frankel}. Recall that any two minimal hypersurfaces in a Frankel manifold intersect with each other. But in a weakly Frankel manifold, two minimal hypersurfaces can be disjoint, but these hypersurfaces must be connected by some minimal foliation.
	
	Furthermore, in weakly Frankel manifolds, we can show that for any $\omega>0$,
	\[\area_g(\cM^S_\omega):=\{\area_g(\Gamma):\text{$\Gamma$ is connected stable with $\area_g(\Gamma) \leq \omega$}\}\]
	is a finite set, see Corollary \ref{lem:WFStableAreas}. Therefore, either $\bigcup_{\omega>0}\area_g(\cM^S_\omega)$ is an infinite set, in which case we can find stable minimal hypersurfaces with arbitrarily large area, or $\bigcup_{\omega>0}\area_g(\cM^S_\omega)$ is a finite set, in which case there are only finitely many areas of stable minimal hypersurfaces. In the latter case, we need to find unstable minimal hypersurfaces with arbitrarily large area.
	
	Although the minimal hypersurface which realizes the $p$-width could have multiple components, but if so, these components are connected by a minimal foliation which implies they each are stable and have the same area. Thus,
	\[\omega_p=m_p\area_g(\Gamma_p)\] for some connected minimal hypersurface $\Gamma_p$ and multiplicity $m_p$. Moreover, Zhou observed that the unstable minimal hypersurfaces must have multiplicity-one, while the stable minimal hypersurfaces may possibly still have higher multiplicity (\cite[Theorem C]{Zh}). Thus, we only need to show that the $p$-width can not be all realized by multiples of stable minimal hypersurfaces for large enough $p$.
	
	To do this, we utilize Lusternik-Schnirelmann arguments shown in \cite{Ai}. In \cite{Ai} Aiex showed that the space of min-max minimal hypersurfaces in a manifold with positive Ricci curvature is noncompact. His idea is to study a generalization of the Lusternik-Schnirelmann category, and he can prove that under certain conditions, there is a strict jump of the width from $\omega_p < \omega_{p+N}$ for $p$ large. Then if there are eventually only stable minimal hypersurfaces appearing in the volume spectrum, we can show the growth violates the Weyl law. In this paper, we generalize Aiex ideas to weakly Frankel manifolds, without using the analyticity assumption in \cite{Ai}. One important new case which arises is when the manifold is foliated by minimal hypersurfaces. These can complicate the space of minimal hypersurfaces, and so we need a more precise study of its topology.

	\subsubsection*{Weak core case} The second possibility is that after cutting along finitely many contracting minimal hypersurfaces, we get a component that has minimal contracting boundary, and there are no contracting minimal hypersurfaces in the interior of this component. Motivated by Song's work, we call such a compact manifold a \emph{weak core}. If we can find a weak core, then we can still use Song's results with slight modifications in this more general setting to show that there exist embedded minimal hypersurfaces with arbitrarily large area.

	\subsubsection*{Accumulating case}
	
	The last possibility is that we can not get a weak core after finitely many cutting steps. Even when we cannot get a weak core, there are some relatively mild cases that we are able to handle. For example, there is a kind of metric called which we will call a \emph{spindle}, where the minimal hypersurface in the middle is non-isolated, and hypersurfaces have larger area the closer they are to the middle one (see Figure \ref{fig:spindle}). Although we may not be able to cut the manifold to a weak core in finitely many steps, we can use the approximation argument to show that there exist minimal hypersurfaces with arbitrarily large area.
	
	Finally, there is an extremely pathological case, that we can not get a weak core after finitely many cuttings, and we also can not find a spindle part. In this case, we can show that the space of certain pathological stable minimal hypersurfaces (see Figure \ref{fig:pathological}) is homeomorphic to a Cantor set. 

	\begin{figure}[h] \label{fig:spindle}
	\includegraphics[width=0.7\textwidth]{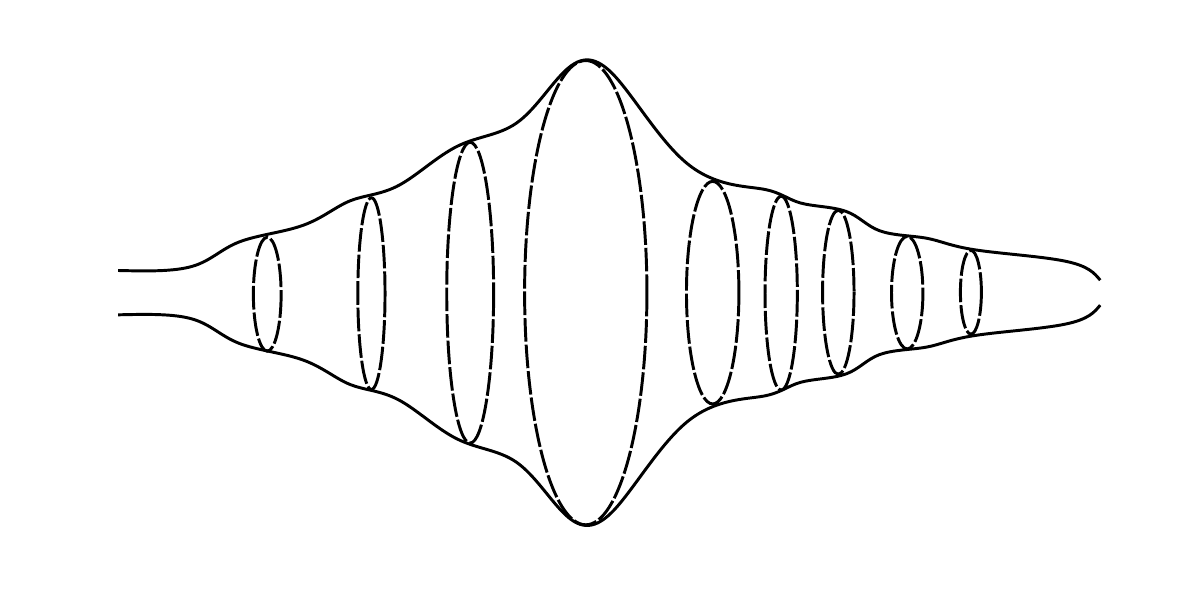}
	\caption{This is a model of a spindle metric on a cylinder. All the dashed lines represent stable minimal hypersurfaces. There are infinitely many minimal hypersurfaces approaching the center one.}
\end{figure}

	\begin{figure}[h] \label{fig:pathological}
		\includegraphics[width=0.9\textwidth]{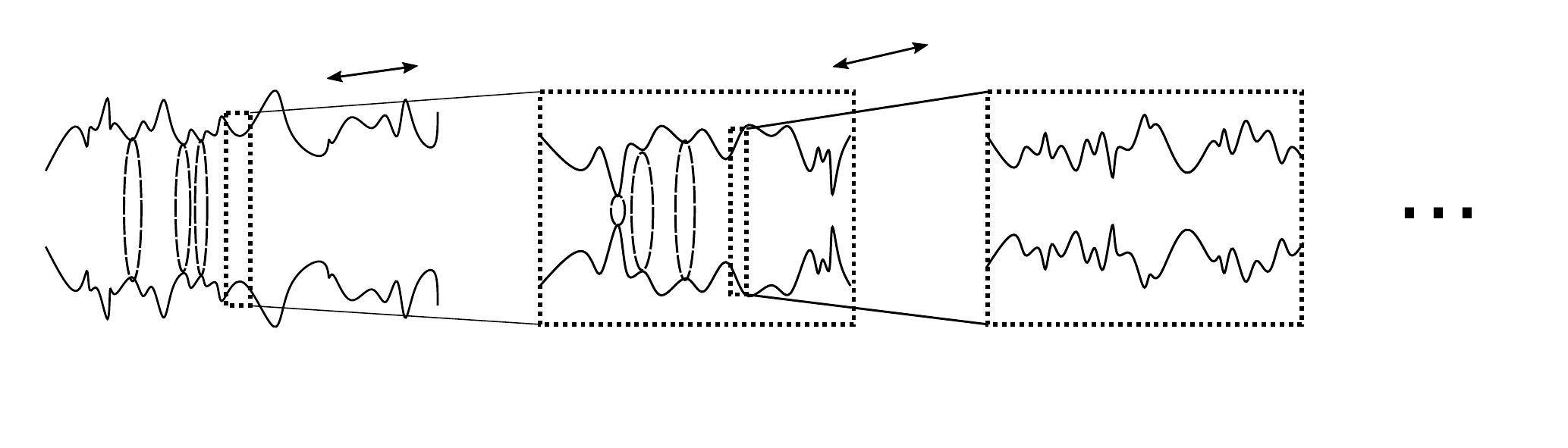}
		\caption{This diagram gives an idea of the fractal-like behavior of the pathological non-monotonic accumulating minimal hypersurfaces which appear in this the later case.}
	\end{figure}
	
	We give an explicit example of such a metric in the Appendix. Nevertheless, we cannot actually rule out the possibility that there are minimal hypersurfaces with arbitrarily large area in this case.

	\subsubsection*{One-sided minimal hypersurfaces}
	One-sided minimal hypersurfaces do not show up from min-max in a bumpy metric. This fact is proved by \cite{Zh}. In a general metric, one-sided minimal hypersurfaces may show up, but they are not trouble to the proof in the paper. But to make the presentation more clear, we will assume in Sections 2 through 5 that $(M,g)$ contains no one-sided minimal hypersurfaces. We discuss the necessary modifications to handle one-sided hypersurfaces in Section \ref{one-sided case}.

	\subsection*{Organization of the paper}
	
	In Section \ref{S:Preliminaries}, we give preliminary tools and definitions for this paper. In Section \ref{WeaklyFrankelCase}, we discuss the weakly Frankel case. In Section \ref{CoreCase}, we discuss the weak core case. In Section \ref{AccumlatingCase}, we discuss the pathological case. In Section \ref{one-sided case}, we discuss the case that one-sided minimal hypersurfaces show up. Finally, in Section \ref{S:Applications to special manifolds}, we discuss applications. We also have an appendix to record an explicit construction of the pathological metric.

\section{Preliminaries}\label{S:Preliminaries}

In Sections 2 through 5, we will assume that $(M,g)$ contains no one-sided minimal hypersurfaces. This is just to make the exposition easier to follow. But in Section 6, we describe the necessary technical modifications needed in general.

\subsection{Minimal hypersurfaces}

Let $\Gamma$ be a smooth two-sided closed embedded hypersurface of $M$. Recall that $\Gamma$ is a minimal hypersurface of $(M,g)$ if it is a critical point for the area functional, that is, for all smooth variations $\Gamma_t = \varphi(\Gamma \times \{t\})$ given by $\varphi: \Gamma \times (-\delta,\delta) \to M$ with $\varphi(x,0) = x$ for all $x \in \Gamma$, we have
\[
\left.\frac{d}{dt}\right|_{t=0} \area_{g}(\Gamma_t) = 0.
\]
Moreover, by the first variation formula,
\[
\left.\frac{d}{dt}\right|_{t=0} \area_{g}(\Gamma_t) = - \int_\Gamma \langle X,\bH \rangle
\]
where $X(x) = \frac{\partial \phi}{\partial t}(x,0)$ is the vector field on $\Gamma$ associated to the variation and $\bH$ is the mean curvature vector for $\Gamma$. So this says $\Gamma$ is a minimal hypersurface for $(M,g)$ if and only if its mean curvature vector $\bH$ vanishes identically.

Recall that a minimal hypersurface $\Gamma$ is stable if
\[
\left.\frac{d^2}{dt^2}\right|_{t=0} \area_{g}(\Gamma_t) \geq 0
\]
for all smooth variations of $\Gamma$. Let $\nu$ be a choice of unit normal on $\Gamma$. When $\Gamma_t$ is a normal variation of $\Gamma$ which means that we can write $X = f\nu$ for some $f \in C^\infty(\Gamma)$, then the second variation formula says
\[
\left.\frac{d^2}{dt^2}\right|_{t=0} \area_{g}(\Gamma_t) = \int_\Gamma |\nabla f|^2-|A|^2 f^2 - \Ric_M(\nu,\nu) f^2 = \int_\Gamma f L_\Gamma(f)
\]
where $A$ is the second fundamental form of $\Gamma$ and $L_\Gamma$ is the Jacobi operator
\[
L_\Gamma: C^\infty(\Gamma) \to C^\infty(\Gamma) \quad \text{given by} \quad L_\Gamma(f) = -\Delta f - (|A|^2+\Ric_M(\nu,\nu))f.
\]
Recall that the spectrum of $L_\Gamma$ is discrete, bounded from below, and the finite-dimensional eigenspaces give an orthogonal decomposition of $L^2(\Gamma)$. We let $\ind_g(\Gamma)$ denote the Morse index of a minimal hypersurface $\Gamma$ which is defined to be the number of negative eigenvalues of $L_\Gamma$. We say that a minimal hypersurface $\Gamma$ is degenerate if $0$ is an eigenvalue of $L_\Gamma$, that is, there exists a non-trivial $f$ such that $L_\Gamma(f)=0$ which we call a Jacobi field.

Recall that the eigenspace associated to the least eigenvalue is $1$-dimensional and the corresponding eigenfunction does not change sign, and hence, can be chosen to be positive. In particular, if we can find a non-trivial Jacobi field which changes sign, then $0$ is not the least eigenvalue of $L_\Gamma$ which implies that $\Gamma$ is unstable.

\begin{definition}\label{def:4nbhds}
	Let $\Gamma$ be a minimal hypersurface in $(M,g)$, and suppose that $\varphi: \Gamma \times [-\delta,\delta] \to M$ is a diffeomorphism onto its image such that $\varphi(x,0) = x$ for all $x \in \Gamma$. We say that the closed halved tubular neighborhood $\varphi(\Gamma \times [0,\delta])$ is
	\begin{itemize}
		\item \emph{contracting}: if the mean curvature of $\varphi(\Gamma \times \{t\})$ is non-vanishing and points towards $\Gamma$ for all $t \in (0,\delta]$;
		\item \emph{expanding}: if the mean curvature of $\varphi(\Gamma \times \{t\})$ is non-vanishing and points away from $\Gamma$ for all $t \in (0,\delta]$;
		\item \emph{foliated by minimal hypersurfaces}: if $\varphi(\Gamma \times \{t\})$ is a minimal hypersurface for all $t \in (0,\delta]$; and 
		\item \emph{accumulating}: if for each $t \in (0,\delta]$, the mean curvature of $\varphi(\Gamma \times \{t\})$ is either zero or non-vanishing, and moreover, exist arbitrarily small $r,s \in (0,\delta]$ such that $\varphi(\Gamma \times \{r\})$ is minimal and the mean curvature of $\varphi(\Gamma \times \{s\})$ is non-vanishing.
	\end{itemize}
	Similarly, we can define each of the above for the other closed halved tubular neighborhood $\phi(\Gamma \times [-\delta,0])$ by replacing each $(0,\delta]$ above with $[-\delta,0)$.
\end{definition}

Again, see Figure \ref{fig:nhbd} for models of these neighborhoods. It was noted in \cite[Lemma 11]{So}\footnote{In Song's case, he only needed to define and consider expanding and contracting neighborhoods because he assumed his manifold only had finitely many stable minimal hypersurfaces.} that each side of a minimal hypersurface must be given by one the four neighborhoods above and is generated by flowing along the first eigenfunction of $L_\Gamma$. For the reader's convenience, we give a proof of this in Appendix \ref{sec:NiceNeighborhood}. 

\begin{lemma}[\cite{So}] \label{TubularNeighborhood}
	Let $\Gamma \subset (M,g)$ be a minimal hypersurface. There exists a diffeomorphism $\varphi: \Gamma \times [-\delta,\delta] \to M$ onto a tubular neighborhood of $\Gamma$ such that
	\begin{itemize}
		\item $\varphi(x,0) = x$ for all $x \in \Gamma$;
		\item the closed halved tubular neighborhood $\varphi(\Gamma \times [0,\delta])$ is either contracting, expanding, foliated by minimal hypersurfaces, or accumulating; and
		\item the closed halved tubular neighborhood $\varphi(\Gamma \times [-\delta,0))$ is either contracting, expanding, foliated by minimal hypersurfaces, or accumulating.
	\end{itemize}
\end{lemma}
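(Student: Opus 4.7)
The plan is to construct $\varphi$ by combining the normal-exponential flow along the first eigenfunction $\phi$ of the Jacobi operator $L_\Gamma$ with a Lyapunov-Schmidt correction chosen so that each leaf's scalar mean curvature is a multiple of $\phi$. The positivity of $\phi$ then forces the required "either zero or strictly non-vanishing" property on each leaf. Throughout, let $\phi>0$ be the first eigenfunction of $L_\Gamma$ with eigenvalue $\lambda_1$; recall that $\lambda_1$ is simple and $\phi$ is nowhere vanishing.

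First, suppose $\lambda_1\neq 0$. Set $\varphi(x,t)=\exp_x(t\phi(x)\nu(x))$. Linearizing the mean curvature operator, the scalar mean curvature of $\Gamma_t=\varphi(\Gamma\times\{t\})$ (in a smoothly varying unit normal) expands as $H(x,t)=-\lambda_1 t\phi(x)+O(t^2)$. For $\delta>0$ small enough the leading term dominates uniformly on $\Gamma$, so $H(\cdot,t)$ is strictly of one sign for every $t\in(0,\delta]$: contracting if $\lambda_1>0$ and expanding if $\lambda_1<0$, with the opposite direction on $t\in[-\delta,0)$.

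Next, suppose $\lambda_1=0$. Then $\ker L_\Gamma=\mathrm{span}(\phi)$ since $\lambda_1$ is simple and $\lambda_1<\lambda_2$. I apply a Lyapunov-Schmidt reduction: writing small normal perturbations of $\Gamma$ as graphs of $u=\alpha\phi+v$ with $v$ in the $L^2$-orthogonal complement $\phi^\perp$, the projection of the mean curvature equation onto $\phi^\perp$ has invertible linearization in $v$ (this linearization is $-L_\Gamma$ restricted to $\phi^\perp$, which has trivial kernel by assumption on $\ker L_\Gamma$). The implicit function theorem in a suitable H\"older space produces a smooth map $\alpha\mapsto\psi(\alpha)\in\phi^\perp$ with $\psi(0)=0$ such that the scalar mean curvature of $\Sigma_\alpha:=\{\exp_x((\alpha\phi+\psi(\alpha))(x)\,\nu(x)):x\in\Gamma\}$ lies in $\mathrm{span}(\phi)$, equal to $h(\alpha)\phi$ for a smooth scalar $h$ with $h(0)=0$. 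Since $\partial_\alpha(\alpha\phi+\psi(\alpha))|_{\alpha=0}=\phi$ is nowhere vanishing, $\varphi(x,\alpha):=\exp_x((\alpha\phi+\psi(\alpha))(x)\,\nu(x))$ is a diffeomorphism onto a tubular neighborhood for small $\delta_0>0$. Because $\phi>0$, the mean curvature of each $\Gamma_\alpha=\Sigma_\alpha$ is either identically zero or strictly of one sign, and the four-case classification emerges from the closed zero set $T_+:=\{\alpha\in[0,\delta_0]:h(\alpha)=0\}$. After shrinking $\delta_0$ if needed: if $T_+=\{0\}$, the sign of $h$ on $(0,\delta_0]$ is constant, yielding contracting or expanding; if $T_+=[0,\delta_0]$, we have foliated; otherwise $0$ is an accumulation point of both $T_+\setminus\{0\}$ and its complement in $(0,\delta_0]$, giving the accumulating case. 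The symmetric analysis handles $\alpha\in[-\delta_0,0)$.

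The main technical obstacle is the Lyapunov-Schmidt step: one must choose an appropriate H\"older space so that the implicit function theorem applies and yields $\psi$ smooth jointly in $\alpha$ and $x$, and verify that the resulting $\varphi$ is a bona fide diffeomorphism from $\Gamma\times[-\delta_0,\delta_0]$ onto a tubular neighborhood. A subtlety worth noting is that in the accumulating case $T_+$ can have very intricate structure near $0$ (for instance a Cantor-like set), but this does not affect the classification, because the definition of \emph{accumulating} is precisely about the simultaneous presence of zeros and nonzeros of $h$ arbitrarily close to $0$.
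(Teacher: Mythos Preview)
Your proposal is correct and follows essentially the same approach as the paper's proof in Appendix~\ref{sec:NiceNeighborhood}: flow along the first eigenfunction $\phi_0$ in the nondegenerate case, and in the degenerate case apply a Lyapunov--Schmidt reduction so that the mean curvature of each leaf is a scalar multiple $c(t)\phi_0$ of the positive first eigenfunction, which forces the ``zero or strictly non-vanishing'' dichotomy. Your write-up is in fact slightly more explicit than the paper's in two respects: you spell out the trichotomy on the zero set $T_+$ of $h$ that yields the four-case classification (the paper stops at constructing $\varphi$), and you flag the need to work in a genuine Banach space such as a H\"older space to invoke the implicit function theorem, whereas the paper writes the argument informally in $C^\infty(\Gamma)$.
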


As expected, unstable minimal hypersurfaces will always be expanding on both sides, while strictly stable minimal hypersurfaces will always be contracting on both sides. For degenerate stable minimal hypersurfaces, any of the above can happen independently on either side.

The following lemma will help us find contracting minimal hypersurfaces.

\begin{lemma} \label{lem:MinimizeAreaHomology}
	Let $(N^{n+1},g)$ be a compact manifold with minimal boundary $\partial N$. If $\Sigma \subset \partial N$ is a non-contracting boundary component such that
	\[
	\area_g(\Sigma) \leq \area_g(\partial N \setminus \Sigma),
	\]
	then exactly one of the following must be true:
	\begin{enumerate}
		\item there exists minimal $\Gamma \subset \Int(N)$ which is contracting on a side, or
		\item the manifold $(N,g)$ is foliated by minimal hypersurfaces of the form $\Sigma_t = \varphi(\Sigma \times \{t\})$ for some diffeomorphism $\varphi: \Sigma \times [0,1] \to N$ such that $\Sigma_0 = \Sigma$.
	\end{enumerate}
\end{lemma}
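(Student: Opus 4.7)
The strategy combines a one-sided area-minimization with the classification of tubular neighborhoods from Lemma \ref{TubularNeighborhood}. First I would minimize the interior perimeter $\mathcal{F}(E) := \mathcal{H}^n(\partial^* E \cap \Int N)$ over Caccioppoli sets $E \subset N$ that contain a measure-theoretic collar of $\Sigma$ and are disjoint from $\partial N \setminus \Sigma$. Thin collars of $\Sigma$ show $\inf \mathcal{F} \leq \area_g(\Sigma)$, standard BV compactness yields a minimizer $E^*$, and Schoen-Simon regularity (valid since $n+1 \leq 7$) ensures $\Gamma^* := \partial^* E^* \cap \Int N$ is a smooth embedded stable minimal hypersurface. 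The area hypothesis $\area_g(\Sigma) \leq \area_g(\partial N \setminus \Sigma)$ is used to exclude the competing degenerate minimizer $E^* \approx N \setminus (\text{collar of } \partial N \setminus \Sigma)$, so a minimizer on the $\Sigma$ side of any such competitor can always be chosen.

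Next I would analyze the stability of $\Gamma^*$ to sort it into the two cases of the conclusion. If some component $\Gamma_0$ of $\Gamma^*$ is strictly stable, then using the positive first eigenfunction $\phi_1$ of $L_{\Gamma_0}$ with positive eigenvalue $\lambda_1$, the expansion
\[
H(\Gamma_0 + t\phi_1\nu) = t\lambda_1 \phi_1 + O(t^2)
\]
shows that the mean curvature of nearby parallel surfaces points back toward $\Gamma_0$, so $\Gamma_0$ is contracting on both sides and we are in case (1). If instead every interior component of $\Gamma^*$ has a nonzero Jacobi field, then the minimizing property rules out the "expanding" case in Lemma \ref{TubularNeighborhood} (expanding would allow a strict decrease of area by pushing), so each side of each component is contracting, foliated, or accumulating; again a contracting side yields case (1).

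In the remaining situation, every interior minimizer we encounter is foliated or accumulating on both sides, so I would build the foliation of conclusion (2) by extending the local foliation emanating from $\Sigma$ maximally. At each limiting leaf a further application of Lemma \ref{TubularNeighborhood} either produces a new contracting interior minimal (case (1)) or lets the foliation continue, and compactness for stable minimal hypersurfaces of uniformly bounded area controls the limit. The main obstacle is this final extension step: one must control the frontier of the maximal foliation to ensure it reaches $\partial N \setminus \Sigma$ without leaving a gap, and this is where the area hypothesis is used most crucially, together with Sharp-type compactness, to rule out pathological accumulation that would prevent the foliation from filling all of $N$.
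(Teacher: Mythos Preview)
Your overall strategy---minimize area, then classify the minimizer via Lemma~\ref{TubularNeighborhood}---is close in spirit to the paper's, but the order of operations is reversed, and this creates genuine difficulties that the paper's ordering avoids.

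The paper first pushes the maximal minimal foliation out from $\Sigma$ (using compactness of stable minimal hypersurfaces) and only invokes area minimization after this foliation terminates at a leaf $\Sigma_\varepsilon$ that is \emph{expanding} on its outward side. The point of waiting is that one then knows the homological minimizer $\Gamma$ satisfies $\area_g(\Gamma) < \area_g(\Sigma_\varepsilon) = \area_g(\Sigma)$ strictly; combined with the hypothesis $\area_g(\Sigma) \le \area_g(\partial N \setminus \Sigma)$ this forces some component $\Gamma'$ of $\Gamma$ into $\Int(N)$, since $\Gamma \subset \partial N \setminus \Sigma$ would give $\area_g(\Gamma) = \area_g(\partial N \setminus \Sigma) \ge \area_g(\Sigma)$. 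One then pushes the maximal foliation out from $\Gamma'$; being locally area-minimizing, its terminal leaf is contracting or accumulating, and either way we land in case~(1).

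By minimizing first, you lose this strict inequality. If $\Sigma$ is foliated on its inward side, your constrained minimizer $E^*$ could simply be the region bounded by $\Sigma$ and any leaf of that foliation (all with the same interior perimeter $\area_g(\Sigma)$), so $\Gamma^*$ gives no new information and you are thrown back to the foliation-extension argument anyway. Worse, when $\area_g(\Sigma) = \area_g(\partial N \setminus \Sigma)$, you cannot rule out $E^* = N$ and $\Gamma^* = \emptyset$; the paper's strict inequality is exactly what breaks this tie. Your final paragraph essentially restarts the paper's argument (extend the foliation from $\Sigma$, analyze the frontier), but now disconnected from the minimizer $\Gamma^*$ you spent the first two steps constructing.

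Two smaller points. First, an accumulating side immediately yields a nearby \emph{contracting} interior minimal hypersurface (between adjacent minimal leaves in the accumulating neighborhood the mean curvature is nonvanishing, so one of those leaves is contracting toward the gap); the paper disposes of this case in one line, and you should too rather than carrying ``accumulating'' into the foliation-extension step. Second, your claim that minimality rules out expanding on both sides is fine, but it already suffices to know that the outward side of the terminal foliation leaf is not expanding---the inward side is foliated by construction---so the case analysis is shorter than you indicate.
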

\begin{proof}
	Let $\Sigma \subset \partial N$ be a non-contracting minimal hypersurface with $\area_g(\Sigma) \leq \area_g(\partial N \setminus \Sigma)$. Note that if $\Sigma$ is accumulating, then there must be a minimal hypersurface that is contracting on a side nearby (See Lemma \ref{lem:accumulating-nearby}) so that we are in the first case.
	
	Thus, now assume $\Sigma$ is either expanding or locally foliated by minimal hypersurfaces. In either case, by compactness \cite{SS} and Lemma \ref{TubularNeighborhood}, there exists some $\varepsilon \geq 0$ along with a diffeomorphism $\varphi:\Sigma \times [0,\varepsilon] \to N$ onto its image such that $\varphi(\Sigma\times\{0\}) = \Sigma$, the hypersurfaces $\varphi(\Sigma\times\{t\})$ are minimal for all $t \in [0,\varepsilon]$, and which cannot be extended as a minimal foliation any further. By the maximum principle, either $\Sigma_\varepsilon = \varphi(\Sigma\times\{\varepsilon\})$ is contained in $\Int(N)$ or is contained in $\partial N$. If $\Sigma_\varepsilon \subset \partial N$, then either $\varepsilon = 0$ or we must be in the second case where $N$ is foliated by minimal hypersurfaces.
	
	Now assume that either $\Sigma_\varepsilon \subset \Int(N)$ or $\Sigma_\varepsilon = \Sigma$. By construction, $\Sigma_\varepsilon$ is not foliated by minimal hypersurfaces outside of $\varphi(\Sigma \times [0,\varepsilon])$. Moreover, if $\Sigma_\varepsilon$ is contracting or accumulating on this side then we are in the first case. So finally assume that $\Sigma_\varepsilon$ is expanding on this side. Thus, if we minimize area in the homology class $[\Sigma] = [\Sigma_\varepsilon] \in H_n(N;\Z)$, we find a stable minimal hypersurface $\Gamma$ (with possibly multiple components) in $N$ such that
	\[
	\area_g(\Gamma) < \area_g(\Sigma_\varepsilon) = \area_g(\Sigma)
	\]
	since $\Sigma_\varepsilon$ is expanding on a side. By the maximum principle, each component of $\Gamma$ is either contained in $\Int(N)$ or $\partial N \setminus \Sigma$. There must be at least one connected component $\Gamma'$ of $\Gamma$ which is contained in $\Int(N)$. Otherwise if $\Gamma \subseteq \partial N \setminus \Sigma$, then $\Gamma = \partial N \setminus \Sigma$ since $[\Gamma] = [\Sigma] = [\partial N \setminus \Sigma] \in H_n(N;\Z)$ which gives a contradiction to
	\[
		\area_g(\Gamma) < \area_g(\Sigma) \leq \area_g(\partial N \setminus \Sigma).
	\]
	
	Thus, let $\Gamma'$ be a connected component of $\Gamma$ contained in $\Int(N)$. Like above, there exists some $\delta \geq 0$ along with a diffeomorphism $\varphi':\Gamma' \times [0,\delta] \to N$ onto its image such that $\varphi'(\Gamma'\times\{0\}) = \Gamma'$, the hypersurfaces $\varphi'(\Gamma'\times\{t\})$ are minimal for all $t \in [0,\delta]$, and which cannot be extended as a minimal foliation any further. By construction, $\Gamma'_\delta$ must be locally area minimizing outside of $\varphi'(\Gamma'\times [0,\delta])$. Thus, $\Gamma'_\delta$ is either contracting or accumulating on this side putting us in the first case.
\end{proof}

\subsection{Min-max theory}

Here is some notation we will use throughout. See \cite{MN1, MN3} for more details. Let $(M,g)$ be a closed Riemannian $(n+1)$-manifold, assumed to be isometrically embedded in $\R^L$. Let $\bI_k(M;\Z_2)$ denote the space of $k$-dimensional flat chains with $\Z_2$-coefficients in $\R^L$ with support in $M$. Consider
\[
\cZ_n(M;\Z_2) = \{T \in \bI_{n}(M;\Z_2) : T = \partial A \text{ for some } A \in \bI_{n+1}(M;\Z_2)\}
\]
the space of cycles\footnote{Technically, this is the space of boundaries rather than cycles. But this space of boundaries is the connected component of $0$ in the space of cycles which we will always work in.} which gives us a weak notion of hypersurfaces in $M$.

Let $\cV_n(M)$ be the closure, in the weak topology, of the space of rectifiable $n$-varifolds in $\R^L$ with support in $M$. The weak topology on varifolds is induced by the $\bF$ metric. For any $T \in \cZ_n(M;\Z_2)$, we denote $|T|$ to be the integral varifold associated to $T$, and use $\|T\|$ to denote the associated Radon measure. Likewise, for any $V \in \cV_n(M)$, we use $\|V\|$ to denote the associated Radon measure. 

We will consider three different topologies on the set $\cZ_n(M;\Z_2)$ induced by
\begin{itemize}
	\item the flat norm $\cF$,
	\item the $\bF$-metric given by $\bF(T,S) := \bF(|T|,|S|)+\cF(T-S)$, and
	\item the mass norm $\bM$.
\end{itemize}
Unless otherwise stated, we will consider $\cZ_n(M;\Z_2)$ with the flat topology.

In \cite{Al1}, Almgren proved the following are isomorphic
\[
\pi_k(\cZ_n(M;\Z_2)) \cong H_{n+k}(M;\Z_2) \cong 
\begin{cases} 
\Z_2 & k = 1, \\
0 & k \geq 2.
\end{cases}
\]
In fact, $\cZ_n(M;\Z_2)$ is weakly homotopy equivalent to $\RP^\infty$. Hence $H^*(\cZ_n(M;\Z_2);\Z_2)$ is given by the polynomial ring $\Z_2[\lambda]$ generated by the nonzero $\lambda \in H^1(\cZ_n(M;\Z_2);\Z_2)$. 

Let $X$ be a $k$-dimensional cubical subcomplex of $I^m = [0,1]^m$, and suppose $\Phi: X \to \cZ_n(M;\Z_2)$ is a flat continuous map. For $p \in \N$, we say that the map $\Phi$ is a \emph{$p$-sweepout} if 
\[
\Phi^*(\lambda^p) \neq 0 \in H^p(X;\Z_2)
\]
where $\lambda^p$ denotes the $p$-th cup product of $\lambda$. Moreover, we say such $\Phi: X \to \cZ_n(M;\Z_2)$ has no concentration of mass when
\[
\lim_{r \to 0} \sup \{\bM(\Phi(x) \cap B_r(p)) : x \in X, \ p \in M\} = 0.
\]
Given $p\in \N$, we use $\cP_p(M)$ to denote the set of $p$-sweepouts in $M$ with no concentration of mass, and define the $p$-width of $(M,g)$ as
\[
\omega_p(M,g)=\inf_{\Phi \in \cP_p(M)} \sup\{ \bM(\Phi(x)) : x \in \dmn(\Phi) \}.
\]
By much of the interpolation results proved by Marques-Neves (see \cite{MN3}), we can restrict the above to $\widetilde{\cP}_p(M) \subset \cP_p(M)$ of $p$-sweepouts which are continuous in the more strict $\bF$-topology.

Note that $\omega_{p}(M,g) \leq \omega_{p+1}(M,g)$ for all $p$ because any $(p+1)$-sweepout must be a $p$-sweepout. Moreover, these widths go to infinity. In fact, we can quantify the growth of these widths by the Weyl law of volume spectrum, which plays an important role in the study of the min-max theory of minimal hypersurfaces. 

\begin{theorem}[Gromov, Guth, and Liokumovich-Marques-Neves \cite{LMN}]
	There exist a constant $a(n) > 0$ such that for every closed $(n+1)$-manifold $(M,g)$,
	\[\lim_{p\to\infty}\omega_p(M,g)p^{-\frac{1}{n+1}}=a(n)\vol_g(M)^{\frac{n}{n+1}}.\]
\end{theorem}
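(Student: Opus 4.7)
The plan is to prove matching upper and lower bounds on $\omega_p(M,g) p^{-1/(n+1)}$, both converging to a universal constant $a(n)$ times $\vol_g(M)^{n/(n+1)}$. The proof naturally splits into an explicit sweepout construction for the upper bound, a Lusternik--Schnirelmann subadditivity argument for the lower bound, and a Fekete-type argument that identifies the constant in the Euclidean model.

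For the upper bound I would follow Gromov's approach. Fix a fine cubical decomposition $M = Q_1 \cup \cdots \cup Q_N$ into pieces that are nearly Euclidean, with volumes $v_i = \vol_g(Q_i)$. On each piece a Euclidean bisection or hyperplane sweep provides a $p_i$-parameter family of slices whose maximal mass is at most $C\, p_i^{1/(n+1)} v_i^{n/(n+1)}(1+o(1))$ as the pieces shrink. Combining these families by taking appropriate products of the generator $\lambda$ in $H^1(\cZ_n(Q_i;\Z_2);\Z_2)$ yields a $(p_1 + \cdots + p_N)$-sweepout of $M$ whose maximal slice mass is at most $\sum_i C\, p_i^{1/(n+1)} v_i^{n/(n+1)}(1+o(1))$. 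Choosing $p_i$ proportional to $v_i$ and setting $p = \sum p_i$ converts this into $\omega_p(M,g) \leq a(n)\, p^{1/(n+1)} \vol_g(M)^{n/(n+1)}(1+o(1))$ via Jensen's inequality, with the error vanishing as the decomposition refines.

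The lower bound requires a Lusternik--Schnirelmann-type subadditivity: if $\Phi$ is a $(p+q)$-sweepout of $M$ and $M$ is decomposed into two pieces $U$ and $V$, then the factorization of $\lambda^{p+q}$ through $\lambda^p$ and $\lambda^q$, combined with a restriction argument, produces a $p$-sweepout of $U$ and a $q$-sweepout of $V$ whose maximal masses sum to at most $\sup_x \bM(\Phi(x))$. Iterating this over the same nearly-Euclidean decomposition as in the upper bound yields a matching lower inequality. Both estimates then reduce the problem to showing that $a(n) := \lim_{p\to\infty} p^{-1/(n+1)} \omega_p([0,1]^{n+1})$ exists and is positive. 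Subdividing the unit cube into $k^{n+1}$ subcubes of side $1/k$ and applying the same subadditivity yields a Fekete-style inequality that forces convergence, while positivity follows from an elementary isoperimetric argument applied to the slices of any sweepout.

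The hard part will be making the subadditivity quantitative and topologically honest. One must carefully interpolate between the flat topology used to define widths, the $\bF$-topology used for continuity of sweepouts, and the $\bM$-topology in which the mass estimates are naturally formulated; the Marques--Neves interpolation machinery between these three topologies on $\cZ_n(M;\Z_2)$ is precisely what permits the local sweepouts on the nearly-Euclidean pieces $Q_i$ to be glued into a single global $p$-sweepout of $M$ without losing cup-product structure, and without mass accumulating near the interfaces $\partial Q_i$. A secondary obstacle is defining $p$-widths on the pieces $Q_i$, which are manifolds with boundary, in a way compatible with this gluing; this requires a relative version of the width in which cohomological nontriviality is measured modulo the boundary, together with a no-concentration-of-mass control near $\partial Q_i$ so that mass bounds on the pieces add up correctly.
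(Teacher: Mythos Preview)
The paper does not prove this statement; it is quoted in the preliminaries (Section~\ref{S:Preliminaries}) as a known theorem of Gromov, Guth, and Liokumovich--Marques--Neves, cited from \cite{LMN}, and is used only as a black box to control the growth of the $p$-widths in Sections~\ref{WeaklyFrankelCase} and~\ref{CoreCase}. There is no proof in the paper to compare your proposal against.

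That said, your outline is in broad strokes the actual strategy of \cite{LMN}: an upper bound from explicit Gromov--Guth sweepouts on a fine nearly-Euclidean decomposition, a Lusternik--Schnirelmann cup-product inequality for the lower bound, and a Fekete-type argument on the unit cube to identify the constant $a(n)$. One step is stated incorrectly, however. You claim that restricting a $(p+q)$-sweepout $\Phi$ to pieces $U$ and $V$ yields a $p$-sweepout of $U$ and a $q$-sweepout of $V$ whose \emph{maximal masses sum} to at most $\sup_x\bM(\Phi(x))$. This is false already for $M=\Sigma\times S^1$ cut into two half-cylinders: the naive restrictions of the standard sweepout by slices each have maximum mass equal to the global maximum, so the sum is twice the global maximum. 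What the cup-product argument in \cite{LMN} actually shows is that there exists a \emph{single} parameter $x^*$ at which every restricted mass $\bM(\Phi(x^*)\cap\Omega_i)$ is at least $\omega_{p_i}(\Omega_i,\partial\Omega_i)$; summing at that one point gives $\sum_i\omega_{p_i}(\Omega_i,\partial\Omega_i)\le\bM(\Phi(x^*))\le\sup_x\bM(\Phi(x))$. The target inequality is the one you want, but the mechanism is different, and the distinction matters when you try to make the argument rigorous. Your identification of the remaining technical obstacles --- relative widths on pieces with boundary, interpolation between the flat/$\bF$/$\bM$ topologies, and control of mass near the interfaces $\partial Q_i$ --- is accurate and these are indeed the core of \cite{LMN}.
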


Given a sequence of flat continuous maps $\{\Phi_i: X_i \to \cZ_n(M;\Z_2)\}$ where $X_i \subseteq I^{m_i}$ are cubical subcomplexes, we define the width of this sequence as
\[
\bL(\{\Phi_i\})=\limsup_{i\to\infty}\sup\{\bM(\Phi_i(x)):x\in X_i\}.
\]
and we define the set of critical varifolds
\[
\bC(\{\Phi_i\}):=
\{V \in \cV_n(M) : V = \lim_{j\to\infty} |\Phi_{i_j}(x_j)| \text{ and }  
\|V\|(M)=\bL(\{\Phi_i\})\}.
\]

We are also interested in min-max width of a fixed homotopy class. Given a fixed cubical subcomplex $X \subseteq I^{m}$ and $\bF$-continuous map $\Phi: X \to \cZ_n(M;\bF;\Z_2)$, we define the homotopy class of $\Phi$ as
\[
	\Pi(\Phi) = \{\Psi: X \to \cZ_n(M;\bF;\Z_2): \text{$\Psi$ is flat homotopic to $\Phi$}\}.
\] 
The width of a homotopy class $\Pi = \Pi(\Phi)$ is defined as
\[
	\bL(\Pi)=\inf_{\Psi \in \Pi} \sup_{x\in X} \bM(\Psi(x)).
\]

The following min-max theorem is the culmination of the work of Almgren-Pitts \cite{Pi}, the work of Marques-Neves \cite{MN3}, the multiplicity-one conjecture proven by Zhou \cite{Zh}, the denseness of bumpy metrics \cite{Wh}, along with the compactness theory of Sharp \cite{Sh}, and other important work.
\begin{theorem}[\cite{Zh} Theorem C] Let $(M^{n+1},g)$ have an arbitrary metric with $3 \leq n+1 \leq 7$, and let $\{\Phi_i\} \subset \Pi$ be a sequence of homotopic $p$-sweepouts such that
	\[
	\bL(\{\Phi_i\}) = \bL(\Pi) > 0.
	\]
	Then there exists a disjoint collection of closed embedded minimal hypersurfaces $\Sigma_1, \dots, \Sigma_N$ along with positive integer multiplicities $m_1,\dots,m_N$ such that
	\[
		\bL(\Pi) = \sum_{i=1}^N m_i\area_{g}(\Sigma_i) \quad \text{and} \quad \sum_{i=1}^{N} m_i\ind_g(\Sigma_i) \leq p
	\]
	where $m_i > 1$ only if $\Sigma_i$ is degenerate stable, and where as varifolds
	\[
	m_1 \Sigma_1 + \cdots + m_N \Sigma_N \in \bC(\{\Phi_i\}).
	\]
\end{theorem}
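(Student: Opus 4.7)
The plan is to extract a stationary integral varifold from $\{\Phi_i\}$, promote it to a smooth embedded minimal hypersurface with positive integer multiplicities, bound the multiplicity-weighted index by $p$, and finally rule out higher multiplicity on non-degenerate or unstable components via Zhou's multiplicity-one argument combined with a bumpy-metric approximation.

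First, I would feed $\{\Phi_i\}$ into the pull-tight plus discretization/interpolation machinery of \cite{MN3}. After subsequencing and modification preserving both $\Pi$ and the value $\bL(\Pi)$, this produces a discrete Almgren-Pitts-type $p$-sweepout whose accumulation set consists of stationary integral varifolds. The Almgren-Pitts combinatorial extraction then yields $V \in \bC(\{\Phi_i\})$ with $\|V\|(M) = \bL(\Pi)$ that is $\Z_2$-almost-minimizing in sufficiently small annuli. Schoen-Simon regularity \cite{SS} together with Pitts' curvature estimates show that $\spt\|V\|$ is a smooth closed embedded minimal hypersurface; the constancy theorem then decomposes $V = \sum_{i=1}^{N} m_i \Sigma_i$ with disjoint connected components $\Sigma_i$ and $m_i \in \Z_{>0}$. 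The dimension restriction $3 \leq n+1 \leq 7$ is used exactly here.

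Next, the bound $\sum_i m_i \ind_g(\Sigma_i) \leq p$ would follow from Marques-Neves' upper semicontinuity of the Morse index along min-max sequences. The argument is by contradiction: if this sum exceeded $p$, then using $p+1$ linearly independent first-order decreasing variations concentrated near $\spt\|V\|$, one could construct a deformation of $\Phi_i$ on slices mapping near $V$ that strictly lowers the max mass below $\bL(\Pi)$, while leaving the cohomological $p$-sweepout condition $\Phi^*(\lambda^p) \neq 0$ intact (the deformation is $C^0$-small and supported in a topologically trivial neighborhood). This contradicts $\bL(\{\Phi_i\}) = \bL(\Pi)$.

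The main obstacle is the multiplicity-one dichotomy. I would first establish it for bumpy $g$, where the claim is that every $m_i = 1$ (Zhou's theorem \cite{Zh}). Following \cite{Zh}, one perturbs the area functional to the prescribed-mean-curvature functional $\cA_h(\Omega) = \area_g(\partial \Omega) - \int_\Omega h \, d\vol_g$ in the Caccioppoli-set framework of \cite{ZZ1, ZZ2}, for a small smooth $h$ which is generic in the sense that $h \not\equiv 0$ on any open set near $\spt\|V\|$. The PMC min-max automatically produces an embedded hypersurface with nonvanishing prescribed mean curvature and multiplicity one. Sending $h \to 0$ along a generic sequence, the limit stationary varifolds realize the original widths, and bumpiness (absence of Jacobi fields) rules out two PMC sheets fusing in the limit (any such fusion would produce a nontrivial Jacobi field as the normalized limit of sheet graphs), forcing multiplicity one. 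For a general metric, White \cite{Wh} gives bumpy $g_k \to g$ in $C^\infty$; applying the bumpy result in each $g_k$ and using continuity of widths together with Sharp's compactness theorem \cite{Sh}, the resulting $V_k$ subconverge as varifolds to some $V' = \sum m_i \Sigma_i$ supported on a smooth minimal hypersurface for $g$, with $\|V'\|(M) = \bL(\Pi)$ and index sum $\leq p$. If $m_i > 1$ for some component, the normalized limit of the difference of two collapsing sheets yields a nontrivial Jacobi field on $\Sigma_i$, so $\Sigma_i$ is degenerate; the two sheets lying on one side of $\Sigma_i$ forces the Jacobi field to have a sign, which is only possible when $\Sigma_i$ is stable, giving the required "$m_i > 1 \Rightarrow \Sigma_i$ degenerate stable" dichotomy. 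The hardest point is arranging Zhou's PMC perturbation so that its min-max stays comparable to the original $p$-width in the homotopy class $\Pi$, and tracking that multiplicity one persists through $h \to 0$ under the bumpy hypothesis; a secondary difficulty is preserving the index bound, width value, and varifold limit simultaneously through the bumpy-metric approximation.
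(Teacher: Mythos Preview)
Your proposal is essentially correct and outlines the standard strategy behind \cite[Theorem~C]{Zh} together with \cite[Section~8]{MN3} and Sharp's compactness \cite{Sh}; this is precisely what the paper invokes, as it does not give an independent proof but simply remarks that the stated version ``follows from the same reasoning but using \cite[Remark~5.9]{Zh} and \cite[Section~8]{MN3}.'' Your sketch of pull-tight plus Almgren--Pitts extraction, the Marques--Neves index upper bound by deformation, Zhou's PMC perturbation for bumpy metrics, and the bumpy approximation via White--Sharp for the degenerate-stable dichotomy is exactly the intended route.

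One point worth tightening: in your final paragraph you produce, via bumpy approximation, a varifold $V'$ satisfying the mass, index, and multiplicity conclusions, but you do not explain why $V'$ lies in $\bC(\{\Phi_i\})$ for the \emph{given} sequence $\{\Phi_i\}$, since the $V_k$ arise from $g_k$-minimizing sequences rather than from $\{\Phi_i\}$ itself. In the actual argument (see \cite[Section~8]{MN3} and the way \cite{Zh} is organized), one works more carefully: after pull-tight of $\{\Phi_i\}$, Marques--Neves identify a refined subset of $\bC(\{\Phi_i\})$ on which the index bound holds, and Zhou's PMC deformation is performed relative to this same sequence so that the limiting multiplicity-one (or, in the non-bumpy limit, degenerate-stable-if-multiple) varifold is genuinely a subsequential limit of $|\Phi_{i_j}(x_j)|$. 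This is a bookkeeping issue rather than a conceptual gap, and is exactly what the paper defers to \cite[Remark~5.9]{Zh}. Also, a minor correction: the signed Jacobi field in the collapsing-sheets argument comes from the ordered graphical convergence of \emph{adjacent} sheets (which are disjoint, hence one-signed height difference), not from both sheets lying on one side of $\Sigma_i$.
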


Although the theorem stated above is more general than \cite[Theorem C]{Zh}, it follows from the same reasoning but using \cite[Remark 5.9]{Zh} and \cite[Section 8]{MN3}.

\section{Weakly Frankel manifolds} \label{WeaklyFrankelCase}

In this section, we will study manifolds with the following property.

\begin{definition}
	We say a closed Riemannian manifold $(M,g)$ is \emph{weakly Frankel} there exist no minimal hypersurfaces which are contracting on a side.
\end{definition}

The goal of this section is to find minimal hypersurfaces with arbitrarily large area in a weakly Frankel manifold, see Theorem \ref{thm:weaklyFlarge}. Weakly Frankel is a generalization of the Frankel property. Recall that a manifold is Frankel if any two minimal hypersurfaces intersect with each other. In \cite{MN1} Marques-Neves proved that in a manifold with Frankel property, there exists infinitely many closed embedded minimal hypersurfaces.

The idea is that the Frankel property forces the min-max widths $\omega_p$ to be realized as $m_p\Sigma_p$ for some integer multiple $m_p$ of some \emph{connected} minimal hypersurface $\Sigma_p$. Then by using the growth of the widths along with some Lusternik-Schnirelmann arguments, they showed that min-max theory must give infinitely many distinct minimal hypersurfaces. We will use similar reasoning.

\subsection{Manifolds without contracting minimal hypersurfaces} \label{sec:NoContracting}

First, more generally, consider compact manifolds $(N^{n+1},g)$ with $3 \leq n+1 \leq 7$ such that:

\begin{itemize}
	\item the (possibly empty) boundary $\partial N$ is minimal and contracting, and
	\item $\Int(N)$ contains no minimal hypersurfaces which are contracting on a side.
\end{itemize}

\begin{remark}
	The second condition also implies that $N$ contains no minimal hypersurfaces which are accumulating on a side because such a hypersurface must be a limit of contracting minimal hypersurfaces, see Lemma \ref{lem:accumulating-nearby}.
\end{remark}

In particular, these following lemmas apply for a weakly Frankel manifold $N$ in the case when $N$ has no boundary. The case where $N$ has nonempty contracting minimal boundary $\partial N$ will be relevant in the later sections.

\begin{definition} \label{def:MinimalFoliation}
	We say connected closed hypersurfaces $\Sigma_0, \Sigma_1 \subset N$ are \emph{connected by a minimal foliation} if there exists $\delta \geq 0$ and $\varphi: \Sigma \times [0,\delta] \to N$ where
	\begin{itemize}
		\item $\varphi(\Sigma \times \{0\}) = \Sigma_0$ and $\varphi(\Sigma \times \{\delta\}) = \Sigma_1$,
		\item $\varphi$ is a diffeomorphism onto its image, and
		\item $\varphi(\Sigma \times \{t\})$ is a minimal hypersurface for all $t \in [0,\delta]$.
	\end{itemize}
\end{definition}

Unlike Frankel manifolds, weakly Frankel manifolds can have disjoint minimal hypersurfaces, but these hypersurfaces must be connected by a minimal foliation.

\begin{lemma} \label{lem:WFdisjoint}
	Assume $(N,g)$ as above. If $\Sigma_0, \Sigma_1 \subset \Int(N)$ are disjoint connected minimal hypersurfaces, then $\Sigma_0, \Sigma_1$ are connected by a minimal foliation.
\end{lemma}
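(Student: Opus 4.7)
The plan is to apply Lemma \ref{lem:MinimizeAreaHomology} to an appropriate ``region between'' $\Sigma_0$ and $\Sigma_1$ in $N$.

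First I would construct the region. Since $N$ is connected and $\Sigma_0,\Sigma_1$ are disjoint compact connected hypersurfaces, pick a path from $\Sigma_0$ to $\Sigma_1$ whose interior lies in $N\setminus(\Sigma_0\cup\Sigma_1)$, and let $U$ be the connected component of $N\setminus(\Sigma_0\cup\Sigma_1)$ containing this interior. By connectedness of each $\Sigma_i$ together with the two-sidedness assumption, the closure $\overline{U}$ meets all of each $\Sigma_i$. Let $R$ be the metric completion of $U$: a compact manifold whose boundary contains (a copy of) $\Sigma_0$ and (a copy of) $\Sigma_1$, plus possibly components inherited from $\partial N$ and possibly an extra copy of $\Sigma_i$ when $\Sigma_i$ is non-separating. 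Every boundary component of $R$ is minimal, by the hypotheses on $N$ and on $\Sigma_0,\Sigma_1$.

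Next I would apply Lemma \ref{lem:MinimizeAreaHomology} to $R$ with $\Sigma=\Sigma_0$. Assume without loss of generality that $\area_g(\Sigma_0)\le\area_g(\Sigma_1)$; then $\area_g(\Sigma_0)\le\area_g(\partial R\setminus\Sigma_0)$ since $\partial R\setminus\Sigma_0\supset\Sigma_1$. Moreover, $\Sigma_0$ is non-contracting on the side facing $R$, by the standing assumption that $N$ contains no interior contracting (or accumulating) minimals. The lemma's first alternative would yield a contracting minimal hypersurface inside $\Int(R)\subset\Int(N)$, which directly contradicts that standing assumption. Hence the second alternative must hold: there is a diffeomorphism $\varphi:\Sigma_0\times[0,1]\to R$ exhibiting $R$ as a minimal foliation with $\varphi(\Sigma_0\times\{0\})=\Sigma_0$.

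The product structure of $R$ forces $\partial R$ to have exactly two connected components, both diffeomorphic to $\Sigma_0$. Since $\Sigma_1\subset\partial R$, this automatically rules out extra $\partial N$-pieces or extra copies of $\Sigma_i$ in $\partial R$ and forces $\varphi(\Sigma_0\times\{1\})=\Sigma_1$, yielding the desired minimal foliation connecting $\Sigma_0$ and $\Sigma_1$ in the sense of Definition \ref{def:MinimalFoliation}.

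The main obstacle I expect is the topological bookkeeping for $R$: identifying the correct side of each $\Sigma_i$ facing into $R$, and handling the non-separating case or the possibility that $\partial N$ naively appears on $\partial R$. These complications are resolved a posteriori by the rigidity of the product structure furnished by case (2) of Lemma \ref{lem:MinimizeAreaHomology}, but the careful verification that Lemma \ref{lem:MinimizeAreaHomology} is applicable to $R$ (with $\Sigma_0$ non-contracting and of minimal area among the boundary components) is the key step.
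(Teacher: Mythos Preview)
Your proposal is correct and follows essentially the same approach as the paper: take a connected component $W$ of the metric completion of $N\setminus(\Sigma_0\cup\Sigma_1)$ whose boundary meets both $\Sigma_0$ and $\Sigma_1$, check the area inequality using the smaller of the two, and apply Lemma~\ref{lem:MinimizeAreaHomology} to force the foliated alternative. The paper's proof is terser---it simply asserts the existence of such a component and leaves implicit the product-structure conclusion you spell out at the end---but the logic is identical.
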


\begin{proof}
	Assume that $\Sigma_0, \Sigma_1$ are disjoint minimal hypersurfaces. Consider the metric completion of $N \setminus (\Sigma_0 \cup \Sigma_1)$ and pick a connected component $W$ which contains two non-contracting minimal boundary components $\Gamma_0, \Gamma_1$ (along with possibly other necessarily contracting minimal boundary components coming from $N$) which are isometric to $\Sigma_0, \Sigma_1$ respectively. Without loss of generality, we may assume $\area_g(\Gamma_{0}) \leq \area_g(\Gamma_{1})$ so that 
	\[
	\area_g(\Gamma_{0}) \leq \area_g(\partial W \setminus \Gamma_{0})
	\]
	because $\Gamma_1 \subseteq \partial W \setminus \Gamma_0$. Finally, note since we are assuming that $\Int(N)$ contains no contracting minimal hypersurfaces, then so does $\Int(W)$. And therefore, $W$ must be foliated by minimal hypersurfaces by Lemma \ref{lem:MinimizeAreaHomology}. This gives the desired minimal foliation which connects $\Sigma_0, \Sigma_1$.
\end{proof}

\begin{remark} \label{rmk:WFSameArea}
	In particular, disjoint minimal hypersurfaces $\Sigma_0, \Sigma_1$ must be both degenerate stable, homologous to each other, and have $\area_g(\Sigma_0) = \area_g(\Sigma_1)$.
\end{remark}

Consider the set $\cM^S$ of all connected stable minimal hypersurfaces in $(\Int(N),g)$. For $\omega> 0$, it will be useful for us to define an equivalence relation on
\[
\cM^S_\omega = \{\Sigma \in \cM^S : \area_g(\Sigma) \leq \omega\}
\]
by $\Sigma_0 \sim \Sigma_1$ if and only if $\Sigma_0, \Sigma_1$ are connected by a minimal foliation. In particular, this means that if $\Sigma_0 \not\sim \Sigma_1$, then $\Sigma_0$ and $\Sigma_1$ must intersect by Lemma \ref{lem:WFdisjoint}.

\begin{lemma} \label{lem:WFFoliationClasses}
	Assume $(N,g)$ as above. For each $\omega > 0$, the set $\cM^S_\omega\!/\!\sim$ is finite.
\end{lemma}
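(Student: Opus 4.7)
The plan is to argue by contradiction, using compactness of stable minimal hypersurfaces together with the one-dimensionality of the first eigenspace of the Jacobi operator on a connected stable hypersurface. Suppose $\cM^S_\omega/\sim$ were infinite, and pick pairwise inequivalent representatives $\Sigma_1, \Sigma_2, \ldots \in \cM^S_\omega$. Since each $\Sigma_i$ is stable (so $\ind_g(\Sigma_i) = 0$) with $\area_g(\Sigma_i) \leq \omega$ and $n+1 \leq 7$, Schoen-Simon estimates bound $|A|_{\Sigma_i}$ uniformly, and Sharp's compactness theorem yields (after a subsequence) smooth graphical convergence $|\Sigma_i| \to m|\Sigma'|$ to an integer-multiplicity minimal hypersurface with no singular set. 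Because each $\Sigma_i$ is connected and $N$ contains no one-sided minimal hypersurfaces, $\Sigma'$ is two-sided, so $m=1$ and (after passing to a further subsequence) the limit is a single connected component $\Sigma_\infty$; otherwise $\Sigma_i$ would split as $m \geq 2$ disjoint nearby graphs over $\Sigma_\infty$, violating connectedness. Moreover $\Sigma_\infty \subset \Int(N)$, since the contracting boundary $\partial N$ forbids nearby interior minimal hypersurfaces.

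For $i$ large, write $\Sigma_i$ as the normal graph of a function $u_i$ over $\Sigma_\infty$ with $u_i \to 0$ in $C^\infty$ (discarding at most one index at which $\Sigma_i = \Sigma_\infty$). Setting $v_i := u_i/\|u_i\|_{C^0}$, the normalized functions satisfy $L_{\Sigma_\infty} v_i = O(\|u_i\|_{C^2})$, so Schauder estimates produce a $C^2$-subsequential limit $v$ that is a nontrivial Jacobi field on $\Sigma_\infty$. Stability passes to smooth limits, so $\lambda_1(L_{\Sigma_\infty}) \geq 0$, and $v \in \ker L_{\Sigma_\infty}$ forces $\lambda_1 = 0$ with $v$ in the one-dimensional first eigenspace, which is spanned by a positive eigenfunction. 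After a sign choice, $v > 0$ on $\Sigma_\infty$, whence $u_i > 0$ and $\Sigma_i \cap \Sigma_\infty = \emptyset$ for all large $i$.

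To conclude, I examine the halved tubular neighborhood of $\Sigma_\infty$ on the side containing the $\Sigma_i$. Lemma \ref{TubularNeighborhood} lists four possibilities, but it is not contracting (by hypothesis on $N$), not accumulating (by the remark at the start of Section \ref{sec:NoContracting}), and not expanding (since the minimal hypersurfaces $\Sigma_i$ approach $\Sigma_\infty$ from this side). Hence this side is foliated by minimal hypersurfaces $\varphi(\Sigma_\infty \times \{t\})$, $t \in [0,\delta]$. For $i$ large, $\Sigma_i$ lies inside this foliated region, and the strong maximum principle applied at a point where $u_i$ is maximized, combined with unique continuation, forces $\Sigma_i$ to equal some leaf $\varphi(\Sigma_\infty \times \{t_i\})$. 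Any two such leaves are joined by the sub-foliation $\varphi|_{\Sigma_\infty \times [t_i, t_j]}$, so $\Sigma_i \sim \Sigma_j$ for all large $i,j$, contradicting pairwise inequivalence. The main technical hurdles will be ruling out higher-multiplicity convergence of connected $\Sigma_i$ (handled via two-sidedness and connectedness) and producing a sign-definite Jacobi-field limit (handled via stability of $\Sigma_\infty$ and the simplicity of the first eigenvalue of $L_{\Sigma_\infty}$).
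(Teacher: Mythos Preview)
Your argument is correct, but it takes a genuinely different route from the paper. The paper's proof is a two-line application of Lemma~\ref{lem:WFdisjoint}: since the $\Sigma_k$ are pairwise inequivalent and the limit $\Sigma$ can be equivalent to at most one of them, Lemma~\ref{lem:WFdisjoint} forces $\Sigma_k \cap \Sigma \neq \emptyset$ for all large $k$; Sharp's compactness then produces a \emph{sign-changing} Jacobi field on $\Sigma$, contradicting stability. You run the contradiction in the opposite direction: you use stability of the limit to force the Jacobi field to be sign-definite, deduce disjointness of $\Sigma_i$ from $\Sigma_\infty$, and then invoke the local classification of Lemma~\ref{TubularNeighborhood} plus the maximum principle to identify the $\Sigma_i$ as leaves of a minimal foliation, whence $\Sigma_i \sim \Sigma_j$.

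What each buys: the paper's route is much shorter because it leverages the global area-minimization argument already packaged in Lemma~\ref{lem:MinimizeAreaHomology} and Lemma~\ref{lem:WFdisjoint}. Your route avoids those lemmas entirely and is more local, needing only the tubular-neighborhood classification and the strong maximum principle; in particular it reproves, in the special case at hand, the content of Lemma~\ref{lem:WFdisjoint} without homological minimization. The cost is that you have to justify multiplicity-one convergence and rule out the expanding case by hand, both of which you do correctly (connectedness of $\Sigma_i$ over a two-sided limit forces $m=1$; an expanding half-neighborhood admits no nearby interior minimal hypersurfaces by the maximum principle).
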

\begin{proof}
	Suppose that $\cM^S_\omega\!/\!\sim$ were infinite. Then we could find a sequence $\Sigma_k \in \cM^S_\omega$ of stable minimal hypersurfaces each representing a different equivalence class. The areas being bounded implies that after relabeling, we can find a subsequence $\Sigma_k$ which converges smoothly to some stable $\Sigma \in \cM^S_\omega$ by \cite{SS}. Moreover, since $\Sigma$ is not connected by a minimal foliation to all but possibly one $\Sigma_k$, we can assume each $\Sigma_k$ intersects $\Sigma$ by Lemma \ref{lem:WFdisjoint}. But by \cite{Sh}, this means we can construct a Jacobi field for $\Sigma$ which changes sign which would contradict that $\Sigma$ is stable.
\end{proof}

\begin{corollary} \label{lem:WFStableAreas}
	Assume $(N,g)$ as above. For each $\omega> 0$, the set of stable areas
	\[
	 \area_g(\cM^S_\omega) = \{\area_g(\Sigma) : \Sigma \in \cM^S_\omega\}
	\]
	is finite. In particular, if the area of stable minimal hypersurfaces in $(N,g)$ is uniformly bounded, then the set of all stable areas $\area_g(\cM^S)$ is finite.
\end{corollary}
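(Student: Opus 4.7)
The plan is to factor the area map through the quotient $\cM^S_\omega/\!\sim$, which Lemma \ref{lem:WFFoliationClasses} has already shown to be finite. Concretely, I would define a map
\[
\overline{\area}_g : \cM^S_\omega/\!\sim\ \longrightarrow\ (0,\omega], \qquad [\Sigma]\mapsto \area_g(\Sigma),
\]
whose image is precisely $\area_g(\cM^S_\omega)$. Since the domain is finite, finiteness of the image follows immediately, establishing the first claim.

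The only content is checking that $\overline{\area}_g$ is well-defined, i.e.\ that $\Sigma_0\sim \Sigma_1$ implies $\area_g(\Sigma_0)=\area_g(\Sigma_1)$. This is where I would invoke Definition \ref{def:MinimalFoliation}: there exists a diffeomorphism $\varphi:\Sigma\times[0,\delta]\to N$ onto its image with $\varphi(\Sigma\times\{0\})=\Sigma_0$, $\varphi(\Sigma\times\{\delta\})=\Sigma_1$, and each slice $\Sigma_t := \varphi(\Sigma\times\{t\})$ minimal. The function $t\mapsto \area_g(\Sigma_t)$ is smooth on $[0,\delta]$, and by the first variation formula together with $\bH_{\Sigma_t}\equiv 0$,
\[
\frac{d}{dt}\area_g(\Sigma_t) = -\int_{\Sigma_t}\langle X_t,\bH_{\Sigma_t}\rangle\, d\mathcal{H}^n = 0
\]
where $X_t=\partial\varphi/\partial t$. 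Hence the area is constant along the foliation, so $\overline{\area}_g$ is well-defined. (Alternatively one can cite Remark \ref{rmk:WFSameArea} directly once $\Sigma_0\ne\Sigma_1$.)

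For the ``in particular'' statement, if every $\Sigma\in\cM^S$ satisfies $\area_g(\Sigma)\leq\omega$ for some fixed $\omega$, then $\cM^S_\omega=\cM^S$ and the first part gives finiteness of $\area_g(\cM^S)$.

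I do not expect a real obstacle here: the corollary is essentially a repackaging of Lemma \ref{lem:WFFoliationClasses} together with the fact that a minimal foliation is area-preserving. The one place to be slightly careful is the degenerate case $\delta=0$ in Definition \ref{def:MinimalFoliation}, which forces $\Sigma_0=\Sigma_1$ and so is trivial; for $\delta>0$, the first variation argument above (or the explicit statement in Remark \ref{rmk:WFSameArea}) does the job.
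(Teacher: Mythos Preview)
Your proof is correct and is essentially the same as the paper's: both derive the corollary from Lemma \ref{lem:WFFoliationClasses} together with the fact (Remark \ref{rmk:WFSameArea}) that hypersurfaces in the same $\sim$-class have equal area. You simply spell out the first-variation computation behind Remark \ref{rmk:WFSameArea}, which is fine.
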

\begin{proof}
	This follows Lemma \ref{lem:WFFoliationClasses} and Remark \ref{rmk:WFSameArea} which tells us that any two stable minimal hypersurfaces in the same equivalence class must have the same area.
\end{proof}

Let $[\Sigma] \in \cM^S\!/\!\sim$ denote the equivalence class of stable minimal hypersurfaces which are connected to $\Sigma$ by a minimal foliation. There are three types of classes:
\begin{enumerate}
	\item We say $\Sigma$ is \emph{isolated} if $[\Sigma] = \{\Sigma\}$.
	\item We say $\Sigma$ generates a \emph{partial minimal foliation} if there exists a diffeomorphism $\varphi: \Sigma \times I \to M$ onto its image with $\{\Sigma_t\}_{t \in I} = [\Sigma]$ for $\Sigma_t = \varphi(\Sigma \times \{t\})$.
	\item We say $\Sigma$ generates a \emph{(full) minimal foliation} if there exists a fiber bundle $\pi: M \to S^1$ such that $\{\Sigma_\theta\}_{\theta \in S^1} = [\Sigma]$ where $\Sigma_{\theta} = \pi^{-1}(\{\theta\})$.
\end{enumerate}

\subsection{Space of stable minimal cycles}\label{SS:Space of stable minimal cycles}
Consider the subspace $\cS_\omega \subseteq \cZ_n(M;\Z_2)$ of all $T \in \cZ_n(M;\Z_2)$ such that $T = 0$ or $\spt(T)$ is an embedded stable minimal hypersurface in $(M,g)$ with $\bM(T) \leq \omega$. Note $T$ does \emph{not} need to be connected.

Given $\Sigma \in \cM^S_\omega$ which generates either a full or partial minimal foliation of $(M,g)$, we define the space $\cF_\omega \subseteq \cS_\omega$ \emph{associated to the foliation} to be the set of all $T \in \cS_\omega$ such that each connected component of $T$ equals some leaf in this foliation.

\begin{lemma} \label{lem:FullFoliationSpace}
	If $\Sigma$ generates a (full) minimal foliation of $(M,g)$, then the subspace $\cF_\omega \subseteq \cS_\omega$ associated to the foliation is homeomorphic to $\RP^m$ where $m$ is the largest even number such that $m \leq \omega/\area_g(\Sigma)$.
\end{lemma}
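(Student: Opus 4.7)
The plan is to identify $\cF_\omega$ successively with (i) the space of even-degree mod-$2$ divisors on $S^1$ of degree $\leq m$, (ii) the free-$\Z/2$ quotient of a space of arc-unions, and (iii) a real projective space. Throughout, let $\pi: M \to S^1$ denote the fiber bundle furnished by the foliation, and set $a = \area_g(\Sigma)$; by Remark \ref{rmk:WFSameArea} every leaf $\Sigma_\theta = \pi^{-1}(\theta)$ has the same area $a$.

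For (i), every $T \in \cF_\omega$ has the form $T = \sum_{i=1}^{k}\Sigma_{\theta_i} \pmod 2$ for distinct $\theta_i \in S^1$, with $ka = \bM(T) \leq \omega$ so that $k \leq m$. Since $T$ is a boundary, the class $[T] \in H_n(M;\Z_2)$ vanishes; choosing a $1$-cycle $\gamma \subset M$ with $\pi_*[\gamma] = [S^1] \in H_1(S^1;\Z_2)$ (possible since $\pi$ has connected fiber), the mod-$2$ intersection pairing yields $[T]\cdot[\gamma] \equiv k \pmod 2$, forcing $k$ to be even. Conversely, any such $T$ with $k$ even bounds $\sum_j \pi^{-1}(I_j)$ for a cyclic pairing of the $\theta_i$ into $k/2$ arcs $I_j$. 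Thus $\cF_\omega$ is canonically identified with the space $\mathcal{D}_m$ of $\Z_2$-divisors on $S^1$ of even degree $\leq m$, with the flat topology corresponding to collision-cancellation.

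For (ii), introduce $\mathcal{A}_\ell$ ($\ell = m/2$) as the space of open subsets $\Omega \subseteq S^1$ (including $\emptyset$ and $S^1$) whose topological boundary has at most $m$ points, topologized by $L^1$ convergence of $\chi_\Omega$. The boundary map $\Omega \mapsto \partial\Omega$ yields a continuous $2$-to-$1$ surjection $\mathcal{A}_\ell \to \mathcal{D}_m$ with fiber $\{\Omega, S^1 \setminus \Omega\}$, and the complement involution $\iota$ is free (no nontrivial $\Omega$ equals its complement in $L^1$). Hence $\cF_\omega \cong \mathcal{A}_\ell/\iota$.

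For (iii), the main step, I would show $\mathcal{A}_\ell \cong S^m$. Near any $\Omega$ with $k \leq \ell$ arcs, the local structure decomposes as $\R^{2k}$ (perturbations of the $2k$ existing endpoints) times $\mathrm{Sym}^{\ell-k}$ of a $2$-disk (insertion of up to $\ell-k$ small pairs of endpoints, with the cone point corresponding to the pair disappearing), and the latter is homeomorphic to $\R^{2(\ell-k)}$ via the fundamental theorem of algebra. Thus $\mathcal{A}_\ell$ is a closed topological $m$-manifold, connected and simply connected (contract any configuration by shrinking all arcs to $\emptyset$). A stratum-by-stratum induction on $\ell$, using $\mathcal{A}_\ell = \mathcal{A}_{\ell-1} \cup (\mathcal{A}_\ell \setminus \mathcal{A}_{\ell-1})$ and Mayer--Vietoris, shows $\mathcal{A}_\ell$ has the integral homology of $S^m$; the generalized topological Poincar\'e conjecture then gives $\mathcal{A}_\ell \cong S^m$. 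Since $\iota$ acts freely and any free involution on $S^m$ yields $\RP^m$ as quotient, $\cF_\omega \cong \RP^m$ follows. The hard part is the homology calculation: one must verify that each attachment $\mathcal{A}_{\ell-1} \hookrightarrow \mathcal{A}_\ell$ has the effect of contributing a single top-dimensional cell, with intermediate contributions cancelling.
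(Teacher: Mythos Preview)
Your steps (i) and (ii) are fine and amount to identifying $\cF_\omega$ with the $m$-th truncated symmetric product $TP^m(S^1)$, which is exactly what the paper does. The paper then simply cites Mostovoy's theorem \cite{Mo} that $TP^m(S^1)\cong\RP^m$ and is done. Your step (iii) is an attempt to reprove Mostovoy's result by hand, and this is where the proposal becomes problematic rather than merely different.

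There are genuine gaps in (iii). First, the simple-connectivity argument ``contract any configuration by shrinking all arcs to $\emptyset$'' does not work as stated: the full circle $S^1\in\mathcal{A}_\ell$ has no proper arcs to shrink, and if this procedure did define a global deformation it would make $\mathcal{A}_\ell$ contractible, contradicting $\mathcal{A}_\ell\cong S^m$. Second, you explicitly leave the Mayer--Vietoris homology computation unfinished (``one must verify\ldots''), and this is the entire content of the identification. Third, invoking the generalized Poincar\'e conjecture (Smale, Freedman, Perelman, depending on $m$) is enormous overkill for a lemma of this kind, and your final step ``any free involution on $S^m$ yields $\RP^m$ as quotient'' is itself a nontrivial classification statement (fake projective spaces) that you would also need to justify. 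The paper's route avoids all of this by outsourcing the topology to \cite{Mo}; if you want a self-contained argument, a direct inductive cell decomposition of $TP^m(S^1)$ (as in Mostovoy's paper) is far more efficient than the manifold-recognition strategy you outline.
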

\begin{proof}
	Suppose $\Sigma$ generates a minimal foliation of $(M,g)$ so that there exist a fiber bundle $\pi: M \to S^1$ where the fibers $\Sigma_\theta := \pi^{-1}(\{\theta\})$ parameterize the foliation. Note that each $\Sigma_\theta$ is non-separating and homologous to $\Sigma$, and so each $T \in \cF_{\omega}$ must have an even number of components. Let $m$ be the largest even number with $m\area_g(\Sigma) \leq \omega$. Then for each $T \in \cF_{\omega}$, there exists $\theta_1, \dots, \theta_m \in S^1$ such that
	\[
	T = \Sigma_{\theta_1} + \Sigma_{\theta_2} + \cdots + \Sigma_{\theta_m}
	\]
	where we are considering $\Sigma_{\theta_i}$ as cycles in $\cZ_n(M;\Z_2)$. This means that the flat continuous map $q: (S^1)^m \to \cF_{\omega}$ given by
	\[
	q(\theta_1,\dots,\theta_m) = \Sigma_{\theta_1} + \cdots + \Sigma_{\theta_m}
	\]
	is surjective. Note $\spt(T)$ will have fewer than $m$ components when $\theta_i = \theta_j$ for some $i \neq j$, and $T = 0$ whenever $\theta_1 = \theta_i$ for all $i$. By identifying the fibers of this map, we get a continuous bijection (and hence a homeomorphism by compactness)
	\[
	f: TP^m(S^1) \to \cF_{\omega} \qquad \text{by} \qquad f[(\theta_1,\dots,\theta_m)] = \Sigma_{\theta_1} + \cdots + \Sigma_{\theta_m}
	\]
	where $TP^m(S^1) = (S^1)^m/\sim$ is given the quotient topology by the relation
	\[
	(\theta_1,\dots,\theta_m) \sim (\theta_1',\dots,\theta_m') \quad \text{iff} \quad \Sigma_{\theta_1} + \cdots + \Sigma_{\theta_m} = \Sigma_{\theta_1'} + \cdots + \Sigma_{\theta_m'}.
	\]
	The space $TP^m(S^1)$ is known as the $m$-th truncated symmetric product of $S^1$ and is homeomorphic to $\RP^m$ by \cite[Theorem 2]{Mo}. Therefore, $\cF_\omega \cong \RP^m$.
\end{proof}

\begin{lemma} \label{lem:PartialFoliationSpace}
	Suppose $\Sigma$ generates a partial minimal foliation of $(M,g)$, then the subspace $\cK_\omega \subseteq \cS_\omega$ associated to the foliation has two exactly components $\cK_\omega^0, \cK_\omega^1$ where $\cK_\omega^0$ strongly deformation retracts to $0 \in \cK_\omega^0$ and $\cK_\omega^1$ strongly deformation retracts to $\Sigma_0 \in \cK_\omega^1$.
\end{lemma}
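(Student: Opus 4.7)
My approach is to identify the two components of $\cK_\omega$ via the parity of the number of foliation leaves appearing in each cycle, and then exhibit an explicit deformation retraction by ``collapsing the slab.'' Using $\varphi: \Sigma \times I \to M$ defining the partial foliation, every $T \in \cK_\omega$ decomposes as $T = \Sigma_{t_1} + \cdots + \Sigma_{t_k}$ for distinct $t_1 < \cdots < t_k \in I$ with $k \leq \lfloor \omega/\area_g(\Sigma) \rfloor$. Because the foliation is partial (interval-parametrized) rather than full, each leaf $\Sigma_t$ must be separating in $M$: otherwise the Poincar\'e dual of $[\Sigma] \neq 0 \in H_n(M; \Z_2)$ would yield an $S^1$-valued transverse function extending the minimal foliation to a full fiber-bundle foliation, contradicting partiality. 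Thus every such sum of leaves is automatically a $\Z_2$-boundary, and we partition $\cK_\omega = \cK_\omega^0 \sqcup \cK_\omega^1$ by the parity of $k$ (with $0 \in \cK_\omega^0$).

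To contract each piece, fix $t_* \in I$ and define the family $\phi_s: \varphi(\Sigma \times I) \to \varphi(\Sigma \times I)$ by $\phi_s(\varphi(x, t)) := \varphi(x, (1-s)t + s t_*)$ for $s \in [0, 1]$: this is a diffeomorphism of the slab for $s < 1$ and collapses it onto $\Sigma_{t_*}$ at $s = 1$, with Lipschitz constants uniformly bounded in $s$. Setting $H(T, s) := (\phi_s)_* T$ yields a continuous map $H: \cK_\omega \times [0, 1] \to \cK_\omega$ in the flat topology (by the standard continuity of pushforward of flat chains under uniformly Lipschitz families of maps), with $H(T, 0) = T$ and $H(T, 1) = k \cdot \Sigma_{t_*} \pmod{2}$, which equals $0$ when $k$ is even and $\Sigma_{t_*}$ when $k$ is odd. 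Restricting to even $k$ gives a strong deformation retraction of $\cK_\omega^0$ onto $\{0\}$ (note $H(0, s) = 0$ for all $s$); restricting to odd $k$ contracts $\cK_\omega^1$ onto $\{\Sigma_{t_*}\}$, so $\cK_\omega^1$ is contractible.

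Finally, I must verify that $\cK_\omega^0$ and $\cK_\omega^1$ are genuinely distinct components. This reduces to showing parity is locally constant on $\cK_\omega$ in the flat topology: given a flat-convergent sequence $T_n \to T$ in $\cK_\omega$, any discrepancy between the leaf-counts must come from pairs of adjacent leaves bounding thin slabs (which vanish in the flat limit), since a ``lone'' extra leaf is excluded by a uniform positive flat-norm lower bound for single leaves in the slab, arising from the nontriviality of a single leaf's class in $H_n(\varphi(\Sigma \times I); \Z_2) \cong \Z_2$. Both pair-merging and pair-emergence preserve parity mod $2$, completing the argument. The principal technical obstacle is this last step, since flat convergence a priori permits significant deformation of support; the argument must simultaneously exploit the smooth product structure of the slab and the nontriviality of leaf-classes to rule out anomalous flat-close cycles with mismatched parity.
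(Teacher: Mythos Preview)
Your overall strategy---splitting $\cK_\omega$ by the parity of the number of leaves and then collapsing the slab via a linear homotopy $t \mapsto (1-s)t + s t_*$---is exactly what the paper does (the paper takes $t_* = 0$ and writes the homotopy directly on the leaf labels rather than as a pushforward, but this is cosmetic). Your final paragraph, arguing that parity is locally constant in the flat topology, is more careful than the paper, which simply asserts that $\cK_\omega^0$ and $\cK_\omega^1$ are the two components.

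There is, however, a genuine gap in your argument that $\Sigma$ is separating. You claim that if $[\Sigma] \neq 0$ in $H_n(M;\Z_2)$, then Poincar\'e duality produces an $S^1$-valued map ``extending the minimal foliation to a full fiber-bundle foliation.'' But a cohomology class in $H^1(M;\Z_2)$ does not by itself yield a circle-valued map, and even if you lift to $H^1(M;\Z)$ (using two-sidedness), the resulting map $M \to S^1$ has fibers which are merely homologous to $\Sigma$, not minimal. The definition of a \emph{full} minimal foliation requires every fiber to be a minimal hypersurface, so you have not produced the claimed contradiction to partiality.

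The paper's argument is different and genuinely uses the ambient hypothesis: it cuts $M$ along $\Sigma$, obtaining (if $\Sigma$ were non-separating) a connected $N$ with two non-contracting boundary components of equal area, and then invokes Lemma~\ref{lem:MinimizeAreaHomology}. Since $(M,g)$ is weakly Frankel, case~(1) of that lemma is excluded, forcing $N$ to be foliated by minimal hypersurfaces---which upgrades the partial foliation to a full one and gives the contradiction. In short, the separating property is not a purely topological consequence of partiality; it depends on the absence of contracting minimal hypersurfaces in $M$, and your proof should invoke that.
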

\begin{proof}
	Suppose $\Sigma$ generates a partial minimal foliation of $(M,g)$ so that there exists a diffeomorphism $\varphi: \Sigma \times [0,\delta] \to M$ onto its image where the slices $\Sigma_t := \varphi(\Sigma \times \{t\})$ parameterize the partial foliation.
	
	First, note that $\Sigma$ here must be separating. Otherwise, we can consider the metric completion $N$ of $M \setminus \Sigma$ which must be connected and have exactly two boundary components both isometric to $\Sigma$. Moreover, if $N$ is foliated by minimal hypersurfaces, then $\Sigma$ must generate a (full) foliation of $M$ by Lemma \ref{lem:MinimizeAreaHomology}, which contradicts that this foliation is only partial.
	
	Thus, each $T \in \cK_\omega$ can have either an even or odd number of components. And, in fact, $\cK_\omega$ has exactly two connected components $\cK_\omega^0$, $\cK_\omega^1$ consisting of all $T \in \cK_\omega$ with an even number and odd number, respectively, of components. Consider the flat continuous homotopy $H: \cK_\omega \times [0,1] \to \cK_\omega$ given by
	\[
	H(T,s) = \Sigma_{(1-s)t_1} + \cdots + \Sigma_{(1-s)t_m}
	\]
	where $T = \Sigma_{t_1} + \cdots + \Sigma_{t_m}$. This map gives a strong deformation retract of $\cK_\omega^0$ to the point $0 \in \cK_\omega^0$ and a strong deformation retract of $\cK_\omega^1$ to the point $\Sigma_0 \in \cK_\omega^1$.
\end{proof}

\begin{proposition} \label{prop:StableTopology}
	There exist $C' > 0$ such that for all $\omega > 0$, we have 
	\[
	H^m(\cS_\omega,\Z_2) = 0 \quad \text{for all } \quad m \geq C'\omega.
	\]
\end{proposition}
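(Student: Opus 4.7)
The plan is to decompose $\cS_\omega$ as a finite wedge of subspaces indexed by the equivalence classes in $\cM^S_\omega/\!\sim$, then use the explicit topological descriptions in Lemmas \ref{lem:FullFoliationSpace} and \ref{lem:PartialFoliationSpace} to compute each summand's cohomology, and finally convert the pointwise vanishing bound into a uniform one via the monotonicity formula.

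The key structural observation is that every nonzero $T \in \cS_\omega$ has all of its (pairwise disjoint, embedded) components in a single equivalence class: if two components lay in different classes they could not be connected by a minimal foliation, but Lemma \ref{lem:WFdisjoint} applied to the weakly Frankel manifold $(M,g)$ forces any two disjoint stable minimal components to lie in a common class. Since by Lemma \ref{lem:WFFoliationClasses} there are only finitely many classes $[\Sigma_1],\dots,[\Sigma_{k(\omega)}]$, setting
\[
\cS_\omega^j = \{T \in \cS_\omega : \text{every component of } T \text{ lies in } [\Sigma_j]\} \cup \{0\}
\]
produces a finite closed cover of $\cS_\omega$ whose pairwise intersections are exactly $\{0\}$, so that $\cS_\omega \cong \bigvee_j \cS_\omega^j$ at the basepoint $0$.

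Next I would compute $\widetilde{H}^m(\cS_\omega^j;\Z_2)$ case by case. If $[\Sigma_j]$ is isolated, $\cS_\omega^j = \{0,\Sigma_j\}$ is a discrete two-point set, so $\widetilde{H}^m = 0$ for $m \geq 1$. If $[\Sigma_j]$ is a partial foliation, Lemma \ref{lem:PartialFoliationSpace} identifies $\cS_\omega^j$ as a disjoint union of two contractible components, again $\widetilde{H}^m = 0$ for $m \geq 1$. If $[\Sigma_j]$ is a full foliation, Lemma \ref{lem:FullFoliationSpace} gives $\cS_\omega^j \cong \RP^{m_j}$ where $m_j$ is the largest even integer with $m_j \area_g(\Sigma_j) \leq \omega$, so $H^m(\cS_\omega^j;\Z_2) = 0$ for $m > m_j$. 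The wedge-sum formula then yields
\[
\widetilde{H}^m(\cS_\omega;\Z_2) \cong \bigoplus_{j=1}^{k(\omega)} \widetilde{H}^m(\cS_\omega^j;\Z_2), \qquad m \geq 1,
\]
vanishing whenever $m$ exceeds $m_j$ for every full foliation class. To upgrade this to a bound uniform in $\omega$, I would invoke the monotonicity formula to obtain a constant $a_0 = a_0(M,g) > 0$ such that every closed embedded minimal hypersurface in $(M,g)$ has area at least $a_0$; then $m_j \leq \omega/\area_g(\Sigma_j) \leq \omega/a_0$, so any $C' > 1/a_0$ does the job.

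The main obstacle I anticipate is purely topological: verifying the wedge-sum decomposition rigorously inside $\cZ_n(M;\Z_2)$ with the flat topology. Specifically one must (i) show each $\cS_\omega^j$ is closed in $\cS_\omega$, which follows from the Schoen--Simon/Sharp compactness theorem combined with the compactness of each equivalence class as a smooth family of hypersurfaces (trivial in the isolated case, and parametrized by $[0,\delta]$ or $S^1$ in the foliation cases); (ii) verify that $\cS_\omega^i \cap \cS_\omega^j = \{0\}$ for $i \neq j$, which uses that distinct equivalence classes share no leaves; and (iii) check the good-pair/cofibration condition at the basepoint $0$ needed for reduced cohomology of a wedge to split as a direct sum, which should follow from the explicit deformation retractions of Lemma \ref{lem:PartialFoliationSpace} together with the CW structure on $\RP^{m_j}$ from Lemma \ref{lem:FullFoliationSpace}.
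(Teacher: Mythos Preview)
Your proposal is correct and follows essentially the same approach as the paper: decompose $\cS_\omega$ according to the finitely many equivalence classes of $\cM^S_\omega/\!\sim$, apply Lemmas \ref{lem:FullFoliationSpace} and \ref{lem:PartialFoliationSpace} to identify each piece, and use the monotonicity lower area bound to make the vanishing threshold uniform in $\omega$. The only cosmetic difference is that the paper writes the decomposition by first separating off the connected components not containing $0$ (the odd pieces $\cK_\omega^{(j),1}$ and the isolated points $\{\Sigma_I^{(k)}\}$) before taking the wedge, whereas you keep each $\cS_\omega^j$ possibly disconnected and wedge them all at $0$; these are the same space, and your attention to the good-pair condition for the wedge cohomology formula is if anything more careful than the paper's treatment.
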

\begin{proof}
	Let $\Sigma_F^{(1)},\dots,\Sigma_F^{(n_F)}$ generate all distinct full minimal foliations, $\Sigma_P^{(1)},\dots,\Sigma_P^{(n_P)}$ generate all distinct partial minimal foliations, and let $\Sigma_I^{(1)},\dots,\Sigma_I^{(n_T)}$ be all the isolated stable minimal hypersurfaces of $(M,g)$ with area less than or equal to $\omega$. Note there are indeed only finitely many such hypersurfaces by Lemma \ref{lem:WFFoliationClasses}.
	
	Given $\Sigma_F^{(i)}$ which generates a full minimal foliation, let $\cF^{(i)}_\omega$ denote the associated space of cycles. Similarly, given $\Sigma_P^{(j)}$ which generates a partial minimal foliation, let $\cK^{(j)}_\omega$ denote the associated space of cycles, where $\cK^{(j),0}$ and $\cK^{(j),1}$ are defined as in Lemma \ref{lem:PartialFoliationSpace}. Then
	\[
	\cS_\omega = \left(\bigvee_{i=1}^{n_F} \cF^{(i)}_\omega \vee \bigvee_{j=1}^{n_P} \cK_\omega^{(j),0}\right) \sqcup \bigsqcup_{j=1}^{n_P} \cK_\omega^{(j),1} \sqcup \bigsqcup_{k=1}^{n_I} \{\Sigma^{(k)}_I\}
	\]
	where the wedge sums are all taken at $0 \in \cZ_n(M;\Z_2)$. Since each $\cK_\omega^{(j),0}$ strongly deformation retracts to $0$ and each $\cK_\omega^{(j),1}$ strongly deformation retracts to $\Sigma_0$ by Lemma \ref{lem:PartialFoliationSpace}, we can construct a deformation retraction of $\cS_\omega$ onto the subspace
	\[
	\bigvee_{i=1}^{n_F} \cF^{(i)}_\omega \sqcup \bigsqcup_{j=1}^{n_P} \{\Sigma^{(j)}_P\} \sqcup \bigsqcup_{k=1}^{n_I} \{\Sigma^{(k)}_I\} \subseteq \cS_\omega.
	\]
	Now recall that each $\cF^{(i)}_\omega$ is homeomorphic to $\RP^{m_i}$ where $m_i$ is the largest even number less than or equal to $\omega/\area_g(\Sigma_F^{(i)})$ by Lemma \ref{lem:FullFoliationSpace}. Thus, for all $m \geq 1$,
	\[
	H^m(\cS_\omega;\Z_2) = H^m(\bigvee_{i=1}^{n_F} \cF^{(i)}_\omega;\Z_2) = \prod_{i=1}^{n_F} H^m(\cF^{(i)}_\omega;\Z_2) = \prod_{i=1}^{n_F} H^m(\RP^{m_i};\Z_2).
	\]
	
	Finally, by the monotonicity formula, there exists $C'>0$ such that $1/C' < \area_g(\Sigma)$ for all minimal hypersurfaces in $(M,g)$. Note $C'$ is defined independent of $\omega$ and that $m_i < C'\omega$ for all $i = 1,\dots,n_F$. Therefore, $H^m(\cS_\omega;\Z_2) = 0$ for all $m \geq C'\omega$ because then $H^m(\RP^{m_i};\Z_2) = 0$ for each $i$.
\end{proof}

\subsection{Lusternik-Schnirelmann arguments}

Let $\Lambda_\omega$ denote the subspace of varifolds $V \in \cV_n(M)$ with $\|V\|(M) \leq \omega$ and where $\spt(V)$ is a stable minimal hypersurface. Again, $V \in \Lambda_\omega$ may have multiple connected components where each component comes with some positive integer multiplicity.

The following lemma follows from the results proven by Aiex in \cite{Ai} which extend the arguments used in \cite{MN1}. It says that a map which stays near stable minimal hypersurfaces cannot be a $m$-sweepout for $m$ large enough because we have control on the topology of the space of stable minimal hypersurfaces.
\begin{lemma} \label{lem:NotSweepout}
	For every $\omega > 0$, there exists $\varepsilon > 0$ with the following property:
	
	If $Y$ is a cubical subcomplex and $\Psi: Y \to \cZ_n(M;\bF;\Z_2)$ is a map such that 
	\[
	\bF(|\Psi(y)|,\Lambda_\omega) < \varepsilon \quad  \text{for all}  \quad y \in Y, 
	\]
	then $\Psi$ is not an $m$-sweepout for $m \geq C' \omega$ where $C'$ given by Proposition \ref{prop:StableTopology}.
\end{lemma}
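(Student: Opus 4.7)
The plan is to show that, for sufficiently small $\varepsilon$, the map $\Psi$ is flat-homotopic to a map that factors through $\cS_\omega$. The cohomological obstruction from Proposition \ref{prop:StableTopology} then forces $\Psi^*(\lambda^m) = 0$ for $m \geq C'\omega$, which rules out $\Psi$ being an $m$-sweepout.

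To set this up, I first observe that $\Lambda_\omega$ is compact in the $\bF$-topology. Indeed, by the monotonicity formula every minimal hypersurface in $(M,g)$ has area at least $1/C'$, so the multiplicity of any component of $V \in \Lambda_\omega$ is bounded by $C'\omega$; together with Sharp's compactness theorem \cite{Sh} this gives an $\bF$-convergent subsequence of any sequence in $\Lambda_\omega$, with limit of mass at most $\omega$. Moreover, the mod-$2$ reduction
\[
\tau:\Lambda_\omega \to \cS_\omega, \qquad \tau\Bigl(\sum_i m_i\Sigma_i\Bigr) = \sum_{m_i \text{ odd}} \Sigma_i,
\]
is continuous, because in a sufficiently small $\bF$-neighborhood of a fixed $V \in \Lambda_\omega$, the area lower bound prevents new components from appearing or multiplicities from changing, so the decomposition varies continuously.

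Next I would cover $\Lambda_\omega$ by finitely many small open $\bF$-balls $B_1,\dots,B_N$ chosen so that $\tau$ is constant on each $B_i \cap \Lambda_\omega$ with value $T_i \in \cS_\omega$, and so that any cycle $T \in \cZ_n(M;\Z_2)$ with $|T| \in B_i$ is close to $T_i$ in the flat norm. For $\varepsilon$ small enough, the sets $U_i = \{T : |T| \in B_i\}$ satisfy that $\{\Psi^{-1}(U_i)\}$ covers $Y$. Using a partition of unity subordinate to this cover together with the Marques-Neves interpolation in $\cZ_n(M;\bF;\Z_2)$ (see \cite[Section 3]{MN3}), I would patch the local constants $T_i$ together to produce a flat-continuous map $\tilde\Psi:Y \to \cS_\omega$ with $\tilde\Psi(y)$ always flat-close to $\Psi(y)$; Almgren's interpolation then yields a flat homotopy $\Psi \simeq \iota \circ \tilde\Psi$, where $\iota:\cS_\omega \hookrightarrow \cZ_n(M;\Z_2)$ is the inclusion.

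Given this homotopy, for $m \geq C'\omega$ we have $H^m(\cS_\omega;\Z_2) = 0$ by Proposition \ref{prop:StableTopology}, so $\iota^*(\lambda^m) = 0$ and consequently $\Psi^*(\lambda^m) = \tilde\Psi^*(\iota^*(\lambda^m)) = 0$; thus $\Psi$ is not an $m$-sweepout. The main obstacle is the patching step, since $\cS_\omega$ contains continuous families arising from full and partial minimal foliations in addition to isolated points: one must ensure that the partition-of-unity sum of nearby stable cycles stays inside $\cS_\omega$ rather than escaping into general cycles. This is where the explicit description of $\cS_\omega$ from Subsection \ref{SS:Space of stable minimal cycles} is essential — along a foliation the leaves can be added as cycles continuously, so interpolation between $T_i$ and $T_j$ corresponding to the same foliation class can be arranged by sliding along the foliation, keeping the image inside $\cS_\omega$.
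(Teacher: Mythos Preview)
The paper's proof takes a different route: rather than constructing an explicit homotopy to a map factoring through $\cS_\omega$, it invokes the Lusternik--Schnirelmann framework of Aiex \cite{Ai} as a black box. Proposition~\ref{prop:StableTopology} bounds the cohomological dimension of $\cS_\omega$ by $C'\omega$, so by \cite[Lemma~2, Proposition~2]{Ai} the relative category satisfies $\Ncat_{\cZ_n}(\cS_\omega)\leq C'\omega$, and then \cite[Lemmas~3 and~4]{Ai} deliver the conclusion directly for a suitable $\varepsilon$. All of the delicate varifold-versus-cycle bookkeeping you are attempting by hand is already packaged inside those lemmas.

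Your sketch has a concrete internal inconsistency. You ask for balls $B_i$ on which $\tau$ is \emph{constant}, but in the weakly Frankel setting this is impossible whenever foliations are present: any $\bF$-neighborhood of a leaf $\Sigma_0\in\Lambda_\omega$ contains nearby leaves $\Sigma_\theta$ with $\tau(\Sigma_\theta)=\Sigma_\theta\neq\Sigma_0$. (Your justification that ``multiplicities do not change'' in small neighborhoods is likewise false here: $\Sigma_{1/k}+\Sigma_{-1/k}\to 2\Sigma_0$.) Your closing paragraph about ``sliding along the foliation'' to interpolate between distinct $T_i,T_j$ tacitly abandons the locally-constant hypothesis, so the two halves of the argument do not fit together. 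Even after repairing that, the assertion that a cycle $T$ with $|T|\in B_i$ must be flat-close to $T_i$ is not justified: $\bF$-closeness of $|\Psi(y)|$ to some $V\in\Lambda_\omega$ does not by itself pin down the mod-$2$ cycle $\Psi(y)$ near $\tau(V)$. Extracting the correct cycle from the nearby varifold and producing a genuine deformation into $\cS_\omega$ is exactly the content of Aiex's lemmas; your partition-of-unity patching does not supply an independent argument for it.
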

\begin{proof}
	In the proof of Proposition \ref{prop:StableTopology}, the subspace $\cS_\omega \subset \cZ_n(M;\Z_2)$ is deformation retraction to $\bigvee_{i=1}^{n_F} \cF^{(i)}_\omega \sqcup \bigsqcup_{j=1}^{n_P} \{\Sigma^{(j)}_P\} \sqcup \bigsqcup_{k=1}^{n_I} \{\Sigma^{(k)}_I\}$ that is homeomoprhic to the union of Riemannian manifolds. Then any closed curves in $\cS_\omega$ is flat homotopic to a closed curve in $\bigvee_{i=1}^{n_F} \cF^{(i)}_\omega \sqcup \bigsqcup_{j=1}^{n_P} \{\Sigma^{(j)}_P\} \sqcup \bigsqcup_{k=1}^{n_I} \{\Sigma^{(k)}_I\}$, where we can still apply Lemma 2 and Proposition 2 from \cite{Ai}, and as a consequence,
	\[
	\Ncat_{\cZ_n}(\cS_\omega) \leq \cat(\cS_\omega) \leq \dim(\cS_\omega) \leq m
	\] 
	for $m \geq C'\omega$ by Lemma \ref{prop:StableTopology} where $\Ncat_{\cZ_n}$ is a form of a relative Lusternik-Schnirelmann Category as defined in Section 3 of \cite{Ai}, $\cat(\cS_\omega)$ denotes the ordinary Lusternik-Schnirelmann Category, and $\dim(\cS_\omega)$ denotes the cohomological dimension. Finally, since $\cS_\omega \subset \cZ_n(M;\Z_2)$ is closed with $\Ncat_{\cZ_n}(\cS_\omega) \leq m$, the desired result follows from Lemmas 3 and 4 in \cite{Ai}.
\end{proof}

\begin{proposition} \label{prop:AiexResult}
	There exists $C > 0$ such that for each $p \in \N$, either
	\begin{enumerate}
		\item there exists a connected unstable minimal hypersurface $\Sigma \subset (M,g)$ with 
		\[
		\area_g(\Sigma) \geq \omega_p(M,g),
		\]
		\item or for all $m \geq C p^{\frac{1}{n+1}}$, we have that
		\[
		\omega_{p-m}(M,g) < \omega_p(M,g).
		\]
	\end{enumerate}
\end{proposition}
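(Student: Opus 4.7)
The plan is to argue by contradiction, following the Lusternik-Schnirelmann strategy of Marques-Neves \cite{MN1} as adapted by Aiex \cite{Ai}, and using two features special to the weakly Frankel setting: the structure of critical varifolds given by Lemma \ref{lem:WFdisjoint}, and the topological control Proposition \ref{prop:StableTopology}. Fix $C > 0$ large enough (depending on $n$, $M$, $g$) so that for every $p \in \N$,
\[
p - \lceil C p^{1/(n+1)} \rceil \;\geq\; C'\,\omega_p(M,g),
\]
where $C'$ is the constant from Proposition \ref{prop:StableTopology}. Such a $C$ exists because the Weyl law gives $\omega_p = O(p^{1/(n+1)})$.

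Assume alternative (1) fails for this $p$. The first step is to show that every critical varifold arising from any min-max sequence at level $\omega_p$ lies in $\Lambda_{\omega_p}$. Let $\{\Phi_i\}$ be such a sequence. By Zhou's multiplicity-one theorem, each $V \in \bC(\{\Phi_i\})$ decomposes as $V = \sum_j m_j |\Sigma_j|$ with disjoint connected embedded minimal hypersurfaces $\Sigma_j$, where $m_j > 1$ only if $\Sigma_j$ is degenerate stable. If $V$ has at least two components, Lemma \ref{lem:WFdisjoint} forces them to be pairwise connected by minimal foliations, which forces every $\Sigma_j$ to be degenerate stable. If instead $V = |\Sigma|$ is a single unstable component (necessarily with multiplicity one), then $\area_g(\Sigma) = \omega_p$, which would realize (1) and contradict the assumption. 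Hence all $V \in \bC(\{\Phi_i\})$ are fully stable and lie in $\Lambda_{\omega_p}$.

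Now suppose that (2) also fails, so $\omega_{p-m} = \omega_p$ for some $m \geq C p^{1/(n+1)}$; by monotonicity of $q \mapsto \omega_q$, we may take $m = \lceil C p^{1/(n+1)} \rceil$. Pick a sequence of $(p-m)$-sweepouts $\{\Phi_i\}$ with $\sup_y \bM(\Phi_i(y)) \to \omega_{p-m} = \omega_p$; by the previous paragraph, all limit critical varifolds lie in $\Lambda_{\omega_p}$. Using the standard pull-tight and deformation theorems of Marques-Neves-Pitts (cf.\ \cite{MN3, Pi}), we may replace $\{\Phi_i\}$ within its flat homotopy class by a new sequence whose high-mass slices cluster $\bF$-close to $\bC(\{\Phi_i\}) \subset \Lambda_{\omega_p}$, while the remaining low-mass slices are automatically $\bF$-close to $0 \in \Lambda_{\omega_p}$. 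Taking the tolerance smaller than the $\varepsilon$ provided by Lemma \ref{lem:NotSweepout} (applied with $\omega = \omega_p$), for large $i$ we obtain a $(p-m)$-sweepout $\Phi'$ with $\bF(|\Phi'(y)|, \Lambda_{\omega_p}) < \varepsilon$ for every $y$. But then Lemma \ref{lem:NotSweepout} forbids $\Phi'$ from being a $q$-sweepout for any $q \geq C' \omega_p$, while the choice of $C$ gives $q := p - m \geq C' \omega_p$, a contradiction.

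The main obstacle in this plan is the pull-tight/deformation step: one must ensure that the deformation driving the near-maximal slices into an $\bF$-tube of $\Lambda_{\omega_p}$ preserves the homotopy class (hence the sweepout property) and does not raise the maximal mass significantly. This is a standard but technically heavy piece of the Almgren-Pitts-Marques-Neves machinery; the weakly Frankel hypothesis enters only in the earlier step, which identifies $\bC(\{\Phi_i\})$ as a subset of $\Lambda_{\omega_p}$ so that the classical Aiex-style LS argument can be closed using Proposition \ref{prop:StableTopology}.
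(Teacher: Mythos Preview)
Your argument has a genuine gap at the deformation step, and it is not merely ``technically heavy'' --- it does not go through as written.

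The dichotomy you invoke, that after pull-tight every slice is either ``high-mass'' (hence $\bF$-close to $\bC(\{\Phi_i\})$) or ``low-mass'' (hence $\bF$-close to $0$), is false. Pull-tight and the standard deformation theorems only control slices whose mass is within $\delta$ of the width; they say nothing about slices of intermediate mass, e.g.\ $\bM(\Phi_i(x)) \approx \omega_p/2$. Such slices need not be $\bF$-close to \emph{any} element of $\Lambda_{\omega_p}$, and no amount of pull-tight will move them there. So you cannot arrange a $(p-m)$-sweepout lying entirely in an $\varepsilon$-tube of $\Lambda_{\omega_p}$, and the contradiction with Lemma~\ref{lem:NotSweepout} never materializes. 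There is a second, related gap in your first step: after pull-tight the critical set $\bC(\{\Phi_i\})$ consists of \emph{stationary} varifolds, but these need not be smooth; Zhou's theorem furnishes \emph{one} smooth element of $\bC$, not all of them. So the containment $\bC(\{\Phi_i\}) \subset \Lambda_{\omega_p}$ is unjustified.

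The paper's proof runs the Lusternik--Schnirelmann argument in the opposite direction, and this is the key idea you are missing. One starts with a minimizing sequence of \emph{$p$-sweepouts} $\Phi_i: X \to \cZ_n$, splits the domain as $X = Y_i \cup Z_i$ where $Y_i$ is the part lying in the $\varepsilon$-tube of $\Lambda_{\omega_p}$, and uses Lemma~\ref{lem:NotSweepout} to say $\Phi_i|_{Y_i}$ is not an $m$-sweepout for $m \geq C'\omega_p$. The cup-length Vanishing Lemma then forces $\Phi_i|_{Z_i}$ to be a $(p-m)$-sweepout, so $\bL(\{\Phi_i|_{Z_i}\}) \geq \omega_{p-m} = \omega_p$, with equality. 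Running Pitts' combinatorial/regularity argument on this \emph{restricted} sequence produces a smooth critical varifold $V$ which by construction of $Z_i$ satisfies $V \notin \Lambda_{\omega_p}$; hence $\spt V$ is unstable, and then Lemma~\ref{lem:WFdisjoint} forces it to be connected with area $\omega_p$. (The paper also treats separately the case where no homotopy class of $p$-sweepouts achieves $\omega_p$ exactly, using Corollary~\ref{lem:WFStableAreas}; your argument does not address this case either.) The correct choice of $C$ is thus $m \geq C'\omega_p$, not $p-m \geq C'\omega_p$.
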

\begin{proof}
	Let $C'$ be given in Proposition \ref{prop:StableTopology} and Lemma \ref{lem:NotSweepout}, and pick $C>0$ such that $C'\omega_{p}(M,g) \leq C p^{\frac{1}{n+1}}$. Now assume that the second case above does not happen for this choice of $C$, that is, there exists an $m \geq Cp^{\frac{1}{n+1}}$ such that 
	\[
	\omega_{p-m}(M,g) = \omega_p(M,g).
	\] We will consider two separate cases.
	
	{\bf Case 1:} Suppose there exists a homotopy class $\Pi$ of $p$-sweepouts with
	\[
	\bL(\Pi) = \omega_{p}(M,g).
	\] 
	Thus, there exist a cubical subcomplex $X$ along with a min-max sequence of flat homotopic $p$-sweepouts $\Phi_i: X \to \cZ_n(M;\bF;\Z_2)$ such that
	\[
	\bL(\{\Phi_i\}) = \bL(\Pi) = \omega_{p}(M,g).
	\]
	Moreover, we can assume that the sequence is \emph{pulled-tight} by \cite[Section 2.8]{MN3}, that is, so that every $V \in \bC(\{\Phi_i\})$ is stationary. Choose $\varepsilon > 0$ as in Lemma \ref{lem:NotSweepout} for $\omega = \omega_{p}(M,g)$. Let $Y_i$ be the cubical subcomplex consisting of all cells $\alpha$ in $X$ with 
	\[
	\bF(|\Phi_i(x)|, \Lambda_\omega) < \varepsilon \quad \text{ for all } x \in \alpha,
	\] and consider the cubical subcomplex $Z_i := \overline{X \setminus Y_i}$. Note that $\bF(|\Phi_i(x)|, \Lambda_\omega) \geq \varepsilon/2$ for all $x \in Z_i$. By the choice of $\varepsilon$ and $C$, the maps $\Phi_i|_{Y_i}$ are not $m$-sweepouts for all $m \geq Cp^{\frac{1}{n+1}}$ by Lemma \ref{lem:NotSweepout}. Therefore, by the Vanishing Lemma \cite[Lemma 1]{Ai}, the maps $\Phi_i|_{Z_i}$ must be $(p-m)$-sweepouts. In particular, $\bL(\{\Phi_i|_{Z_i}\}) \geq \omega_{p-m}(M,g)$. Moreover, we must have equality $\bL(\{\Phi_i|_{Z_i}\}) = \omega_{p-m}(M,g)$ because
	\[
	\bL(\{\Phi_i|_{Z_i}\}) \leq \bL(\{\Phi_i\}) = \omega_{p}(M,g) = \omega_{p-m}(M,g).
	\]
	
	Since the sequence $\Phi_i$ is pulled-tight, every $V \in \bC(\{\Phi_i|_{Z_i}\})$ is stationary as well. Furthermore, note if there were no $V \in \bC(\{\Phi_i|_{Z_i}\})$ such that $\spt(V)$ is a smooth embedded minimal hypersurface, then by the regularity theory of Pitts \cite{Pi}, no $V \in \bC(\{\Phi_i|_{Z_i}\})$ is almost minimizing in annuli. However, Pitts’ combinatorial argument\footnote{It is important that all $Z_i$ here are cubical subcomplexes of $X \subset I^m$ where $m$ is fixed.} (see \cite[Theroem 5]{Ai}) would allow us to find a sequence of $(p-m)$-sweepouts $\Psi_i: Z'_i \to \cZ_n(M;\bF;\Z_2)$ such that 
	\[
	\bL(\{\Psi_i\}) < \bL(\{\Phi_i|_{Z_i}\}) = \omega_{p-m}(M,g)
	\]
	which contradicts the definition of the $(p-m)$-width.
	
	Therefore, there must exist a $V \in \bC(\{\Phi_i|_{Z_i}\})$ such that
	\[
	\|V\|(M) = \bL(\{\Phi_i|_{Z_i}\}) = \omega_p(M,g)
	\]
	and where $\spt(V)$ is a smooth embedded minimal hypersurface. However, by construction, $V \not\in \Lambda_\omega$. Thus, $\Gamma := \spt(V)$ must be unstable, and hence, $\Gamma$ is connected by Lemma \ref{lem:WFdisjoint} and has
	$\area_g(\Gamma) = \omega_p(M,g)$.
	
	{\bf Case 2:} Now, suppose for every homotopy class $\Pi$ of $p$-sweepouts, we have
	\[
	\omega_p(M,g) < \bL(\Pi).
	\]
	In particular, we can find homotopy classes $\Pi_i$ of $p$-sweepouts such that
	\[
	\omega_p(M,g) < \cdots < \bL(\Pi_i) < \cdots < \bL(\Pi_2) < \bL(\Pi_1) \leq \omega_p(M,g)+1.
	\]
	Note each $\bL(\Pi_i) = m_i \area_g(\Gamma_i)$ for some minimal hypersurfaces $\Gamma_i$ where $m_i = 1$ if $\Gamma_i$ is unstable. Note that some $\Gamma_i$ must be unstable because there would be infinitely many distinct areas of stable minimal hypersurfaces of bounded area which violates Lemma \ref{lem:WFStableAreas}. So take $\Gamma = \Gamma_i$ unstable so that $m_1 = 1$. Therefore, we have $\area_g(\Gamma) = \bL(\Pi_i) > \omega_p(M,g)$.
\end{proof}

\begin{theorem}\label{thm:weaklyFlarge}
	If $(M,g)$ is weakly Frankel, then there exists connected minimal hypersurfaces of arbitrarily large area.
\end{theorem}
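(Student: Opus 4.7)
The plan is to argue by contradiction: suppose every connected minimal hypersurface in $(M,g)$ has area at most some fixed $\omega < \infty$. Then by Corollary \ref{lem:WFStableAreas}, the set of stable areas is a finite set $\{a_1,\dots,a_k\}$, and the monotonicity formula gives $a_{\min} := \min_j a_j > 0$. Since $\omega_p \to \infty$ by the Weyl law, we have $\omega_p > \omega$ for all $p$ sufficiently large. For any such $p$, no connected unstable minimal hypersurface has area $\geq \omega_p$, so conclusion (1) of Proposition \ref{prop:AiexResult} fails; hence conclusion (2) must hold, giving $\omega_{p-m} < \omega_p$ for every $m \geq C p^{1/(n+1)}$.

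The key intermediate step is to show that every such $\omega_p$ actually lies in the countable set $S := \{N a_j : N \geq 1,\, 1 \leq j \leq k\}$. For this, I would take any $p$-sweepout homotopy class $\Pi$ with $\bL(\Pi) > \omega$ and apply Zhou's min-max theorem to write $\bL(\Pi) = \sum_i m_i \area_g(\Sigma_i)$ for a disjoint collection of embedded minimal hypersurfaces $\Sigma_i$. In a weakly Frankel manifold, Lemma \ref{lem:WFdisjoint} and Remark \ref{rmk:WFSameArea} force any two disjoint components to be linked by a minimal foliation, hence to be stable and share a common area; combined with the fact that a single unstable component would yield $\bL(\Pi) \leq \omega$ (excluded here), this forces $\bL(\Pi) \in S$. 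Since $S$ has only finitely many elements in any bounded interval, the infimum $\omega_p = \inf_\Pi \bL(\Pi)$ is achieved, and hence $\omega_p \in S$.

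Finally, I will close with a counting argument. Fix $P$ large, set $\ell := \lceil C P^{1/(n+1)}\rceil$, and iterate conclusion (2) starting from $p = P$ to produce a strictly decreasing sequence $\omega_P > \omega_{P-\ell} > \omega_{P-2\ell} > \cdots$ with at least $\lfloor P/(2\ell)\rfloor \sim P^{n/(n+1)}$ distinct values, all lying in $S \cap [0, \omega_P]$. By the Weyl law, $|S \cap [0, \omega_P]| \leq k\,\omega_P / a_{\min} \sim P^{1/(n+1)}$. Since $n \geq 2$ gives $(n-1)/(n+1) > 0$, the inequality $P^{n/(n+1)} \lesssim P^{1/(n+1)}$ fails for $P$ large, yielding the desired contradiction. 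The main technical point will be verifying the inclusion $\omega_p \in S$; this step packages the full use of the weakly Frankel hypothesis via the dichotomy between minimal foliations and intersections.
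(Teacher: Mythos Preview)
Your argument is correct and follows essentially the same route as the paper: assume a uniform area bound, invoke Proposition \ref{prop:AiexResult}(2) for large $p$ to produce $\gtrsim P^{n/(n+1)}$ distinct widths below $\omega_P$, and contradict the fact that the set $S=\{Na_j\}$ meets $[0,\omega_P]$ in only $\lesssim P^{1/(n+1)}$ points. The one place you are actually more careful than the paper is in justifying $\omega_p\in S$: the paper simply asserts that each $\omega_p$ with $p\ge p_0$ equals $m\,\area_g(\Sigma)$ for some stable $\Sigma$, whereas you observe explicitly that every homotopy class has $\bL(\Pi)\in S$ (via Zhou's theorem together with Lemma \ref{lem:WFdisjoint} and Remark \ref{rmk:WFSameArea}) and then use the discreteness of $S$ to conclude that the infimum $\omega_p=\inf_\Pi \bL(\Pi)$ is attained in $S$.
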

\begin{proof}
	Assume the area of unstable minimal hypersurfaces in $(M,g)$ is bounded. Take $C > 0$ from Proposition \ref{prop:AiexResult} and pick $p_0 \in \N$ such that $\omega_{p_0}(M,g)$ is strictly larger than the area of any unstable minimal hypersurface so that (1) does not happen in Proposition \ref{prop:AiexResult} for $p$ large. Then for all $p \geq p_0$, we have
	\[
	\omega_{p-m_0}(M,g) < \omega_{p}(M,g)
	\]
	where $m_0 = \lceil Cp^{\frac{1}{n+1}} \rceil$. Note that this implies that
	\[
	\omega_{p-\ell m_0}(M,g) < \cdots < \omega_{p-2m_0}(M,g) < \omega_{p-m_0}(M,g) < \omega_{p}(M,g)
	\]
	where $\ell = \lfloor(p-p_0)/m_0\rfloor$. Moreover, by the choice of $m_0$, there exists $q_0 \geq p_0$ and $0 < C' < C$ such that $\ell \geq C' p^\frac{n}{n+1}$ for all $p$. Therefore, for all $p \geq q_0$,
	\[
	\#\{\omega_{k}(M,g) : p_0 \leq k \leq p\} \geq \ell \geq C' p^\frac{n}{n+1}.
	\]
	
	Now, we will show that this forces the area of stable minimal hypersurfaces to be unbounded. Otherwise, then by Lemma \ref{lem:WFStableAreas}, there are only finitely many possible stable areas $\{\alpha_1,\dots,\alpha_N\}$. By the choice of $p_0$, we know that each width with $p \geq p_0$ is of the form $\omega_{p}(M,g) = m \area_g(\Sigma)$ for some $m \in \N$ and some stable minimal hypersurface $\Sigma$ by Lemma \ref{lem:WFdisjoint}. However, this implies
	\[
	\#\{\omega_{k}(M,g) : p_0 \leq k \leq p\} \leq \#\{m\alpha_i : m \in \N, 1 \leq i \leq N, m\alpha_i \leq \omega_{p}(M,g)\}.
	\]
	Let $\alpha = \min_i \alpha_i$ so that if $m\alpha_i \leq \omega_{p}(M,g)$, then $m \leq \frac{\omega_{p}(M,g)}{\alpha} \leq \frac{C}{\alpha}p^{\frac{1}{n+1}}$. Thus,
	\[
	\#\{m\alpha_i : m \in \N, 1 \leq i \leq N, m\alpha_i \leq \omega_{p}(M,g)\} \leq C''p^{\frac{1}{n+1}}
	\]
	where $C'' = \frac{CN}{\alpha}$. But this gives a contradiction.
\end{proof}

\section{Weak core manifolds} \label{CoreCase}

Song resolved Yau's conjecture by proving in \cite{So} that for any $(M^{n+1},g)$ with $3 \leq n+1 \leq 7$, there exists infinitely many distinct minimal hypersurfaces. Song did so by building upon the work of Marques-Neves \cite{MN1} in the Frankel case. 

His idea is that if $(M,g)$ not Frankel and only has finitely many stable minimal hypersurfaces, then we can cut $M$ along some contracting minimal hypersurfaces to construct a compact manifold $U$ with contracting boundary which satisfies the Frankel property for minimal hypersurfaces in $\Int(U)$. He then introduced a novel min-max theory which generates minimal hypersurfaces in $\Int(U)$ by essentially doing min-max on a non-compact manifold formed by gluing cylindrical ends to $U$.

Although the widths $\widetilde{\omega}_p$ in Song's min-max theory grow linearly (instead of sublinearly), by the nature of his construction, Song was able to say more delicate information about the growth of these widths which allows him to find infinitely many minimal hypersurfaces. 

In this section we adapt Song's ideas, but using our more general notion of weak cores. Many of Song's results carry over to this case without much more work.

\subsection{Song's min-max theory}
First, we will briefly overview Song's min-max theory introduced in \cite{So}. Let $(U,g)$ be compact with nonempty minimal contracting boundary. Consider the complete non-compact cylindrical extension
\[
\cC(U) = U \cup (\partial U \times [0,\infty))
\]
where we identify the boundaries $\partial U \subset U$ and $\partial U \times \{0\} \subset \partial U \times [0,\infty)$ to each other in the obvious way. Give $(\cC(U),h)$ the metric $g$ on $U$ and the product metric on $\partial U \times [0,\infty)$. Note that $\cC(U)$ may not be smooth where we glued at.

Let $K_1 \subset K_2 \subset \cdots$ be an exhaustion of $\cC(U)$ by compact regions with smooth boundary. We can consider the corresponding ordinary min-max widths\footnote{Although $K_i$ has boundary, we can still define the widths $\omega_p(K_i,h)$ and by the min-max theory of Li-Zhou \cite{LZ} (see also \cite{SWZ}), we can find minimal hypersurfaces realizing the widths. However, these may have boundary which touches the boundary of $K_i$.} to define
\[
\widetilde{\omega}_p(U,g) := \omega_p(\cC(U),h) := \lim_{i \to \infty} \omega_p(K_i,h)
\]
which Song shows is independent of the choice of exhaustion. Although we are considering sweepouts of the complete non-compact space $\cC(U)$, Song shows that we realize the widths by minimal hypersurfaces with support in $\Int(U) \subset \cC(U)$. Since the time Song gave his proof, the multiplicity-one conjecture has been settled in \cite{Zh} and \cite{SWZ} which gives the following result.

\begin{proposition}[\cite{So} Theorem 9, \cite{SWZ} Theorem 1.1] \label{prop:Song_minmax}
	Let $(U,g)$ be compact with nonempty minimal contracting boundary, and let $p \in \N$ be fixed. Then there exist a collection of disjoint connected minimal hypersurfaces $\Gamma_1,\dots,\Gamma_N \subset \Int(U)$ along with positive integer multiplicities $m_1,\dots,m_N$ such that
	\[
	\widetilde{\omega}_p(U,g) = \sum_{i=1}^N m_i \area_g(\Gamma_i) \quad \text{and} \quad \sum_{i=1}^N \ind_g(\Gamma_i) \leq p
	\]
	where $m_i = 1$ whenever the component $\Gamma_i$ is unstable.
\end{proposition}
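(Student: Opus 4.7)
The plan is to combine Song's exhaustion argument on the cylindrical extension $\cC(U)$ with the multiplicity-one conjecture. Concretely, I would fix an exhaustion $K_1 \subset K_2 \subset \cdots$ of $\cC(U)$ by compact submanifolds with smooth boundary, chosen so that each $\partial K_i$ is a strictly mean-convex slice of the end $\partial U \times [0,\infty)$. Because the product metric makes the slices $\partial U \times \{s\}$ minimal, a small inward perturbation makes each $\partial K_i$ strictly mean-convex toward $K_i$, and the widths $\omega_p(K_i,h)$ converge to $\widetilde\omega_p(U,g)$ by definition.

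Next, for each $K_i$ I would apply the free-boundary/mean-convex min-max of Li--Zhou \cite{LZ}, upgraded by Zhou's multiplicity-one theorem \cite{Zh} and its free-boundary counterpart \cite{SWZ}, to obtain disjoint connected minimal hypersurfaces $\Gamma_1^{(i)},\dots,\Gamma_{N_i}^{(i)} \subset \Int(K_i)$ with multiplicities $m_j^{(i)}$ satisfying $\omega_p(K_i,h) = \sum_j m_j^{(i)}\area_h(\Gamma_j^{(i)})$ and $\sum_j \ind_h(\Gamma_j^{(i)}) \leq p$, with $m_j^{(i)} = 1$ whenever $\Gamma_j^{(i)}$ is unstable. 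The strict mean-convexity of $\partial K_i$ rules out free-boundary components by the maximum principle; any min-max critical varifold touching $\partial K_i$ could be pushed strictly off to lower mass, contradicting minimality at the width.

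The third step is to confine these hypersurfaces to $\Int(U)$ for $i$ large. Here I would use the hypothesis that $\partial U$ is contracting on the $U$-side: the contracting halved tubular neighborhood inside $U$ provides a one-sided barrier with mean curvature pointing into $U$. Combined with the fact that slices of the cylindrical end are minimal in the product metric, a standard barrier/foliation argument (as in \cite{So}) shows that any component $\Gamma_j^{(i)}$ meeting $\partial U \times [0,\infty)$ either can be pushed into $U$ with strictly smaller area (ruling it out as a min-max limit at the width), or is itself one of the minimal slices of the cylindrical end (which cannot arise as a closed hypersurface interior to $K_i$). Thus, for $i$ large, every $\Gamma_j^{(i)}$ lies in $\Int(U)$.

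Finally, I would pass to the limit $i \to \infty$. The total masses are uniformly bounded by $\widetilde\omega_p(U,g)+o(1)$, the indices are bounded by $p$, and all hypersurfaces sit in the fixed compact set $\overline{\Int(U)}$, so Sharp's compactness theorem \cite{Sh} extracts a smooth subsequential limit $m_1\Gamma_1 + \cdots + m_N\Gamma_N$ with $\sum_j m_j\area_g(\Gamma_j) = \widetilde\omega_p(U,g)$ and $\sum_j\ind_g(\Gamma_j) \leq p$; the multiplicity-one condition on unstable components is inherited from the $K_i$-level. The main obstacle I anticipate is step three: ensuring that the barrier argument is uniform in $i$ and compatible with the multiplicity-one conjecture, since one must simultaneously prevent the min-max sequence from drifting arbitrarily far into the cylindrical end and from degenerating onto $\partial U$ itself. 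Handling this rigorously likely requires a careful choice of the perturbation of $\partial K_i$ so that Zhou's genericity hypotheses hold at each $K_i$, then passing to the limit as the perturbation vanishes.
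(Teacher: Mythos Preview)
Your proposal is correct and follows essentially the same approach as the paper, which simply defers to Song's Theorem 9 while noting that the index bounds and multiplicity-one result of \cite{SWZ} are now available to upgrade the conclusion. Your outline is a faithful sketch of how those two ingredients combine; the technical point you anticipate in your final paragraph (perturbing to apply the multiplicity-one machinery at each stage, then passing to the limit) is exactly what the citation to \cite{SWZ} is absorbing.
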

\begin{proof}
	The proof is the same as \cite[Theorem 9]{So}, except at the time of Song's proof, the index bounds and multiplicity-one result in \cite[Theorem 1.1]{SWZ} were not available.
\end{proof}

\subsection{Weak core manifolds}

We need to generalize the notation of Song's cores.

\begin{definition}
	We say a compact manifold $(U,g)$ with nonempty minimal contracting boundary is a \emph{weak core} if there exist no minimal hypersurfaces contained in $\Int(U)$ which are contracting on a side.
\end{definition}

\begin{corollary} \label{cor:WC_minmax}
	Let $(U,g)$ be a weak core. For $p \in \N$, there exists a connected minimal hypersurface $\Gamma_p$ in the interior of $U$ and an positive integer $m_p$ such that
	\[
	\widetilde{\omega}_p(U,g) = m_p \area_g(\Gamma_p) \quad \text{and} \quad \ind_g(\Gamma_p) \leq p
	\]
	where $m_p = 1$ whenever $\Gamma_p$ is unstable.
\end{corollary}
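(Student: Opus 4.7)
The plan is to apply Proposition \ref{prop:Song_minmax} and then use the structural results about manifolds without interior contracting minimal hypersurfaces (from Section \ref{sec:NoContracting}) to collapse the multi-component minimizer into a single connected component with aggregated multiplicity.

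First, I would apply Proposition \ref{prop:Song_minmax} directly to the weak core $(U,g)$ to obtain disjoint connected minimal hypersurfaces $\Gamma_1, \dots, \Gamma_N \subset \Int(U)$ with positive integer multiplicities $m_1, \dots, m_N$ such that
\[
\widetilde{\omega}_p(U,g) = \sum_{i=1}^N m_i \area_g(\Gamma_i), \qquad \sum_{i=1}^N \ind_g(\Gamma_i) \leq p,
\]
where $m_i = 1$ whenever $\Gamma_i$ is unstable. If $N = 1$, set $\Gamma_p := \Gamma_1$ and $m_p := m_1$ and we are finished immediately.

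The interesting case is $N \geq 2$. Here I would invoke Lemma \ref{lem:WFdisjoint}, noting that the weak core $(U,g)$ satisfies the hypotheses of that lemma: its (nonempty) boundary $\partial U$ is minimal and contracting, and $\Int(U)$ contains no minimal hypersurfaces contracting on a side by the very definition of a weak core. Hence any two distinct components $\Gamma_i, \Gamma_j$, being disjoint connected minimal hypersurfaces in $\Int(U)$, must be connected by a minimal foliation. By Remark \ref{rmk:WFSameArea}, every $\Gamma_i$ is then degenerate stable, they are all homologous, and they all share a common area $\alpha := \area_g(\Gamma_1)$.

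I would then set $\Gamma_p := \Gamma_1$ and $m_p := \sum_{i=1}^N m_i$, which immediately gives $\widetilde{\omega}_p(U,g) = m_p \alpha = m_p \area_g(\Gamma_p)$. The index bound $\ind_g(\Gamma_p) \leq p$ holds trivially because $\Gamma_p$ is stable, so $\ind_g(\Gamma_p) = 0$; and the clause ``$m_p = 1$ whenever $\Gamma_p$ is unstable'' is vacuously satisfied since $\Gamma_p$ is stable. There is no real obstacle in this corollary: the entire content is checking that the hypotheses of Lemma \ref{lem:WFdisjoint} are met by a weak core, after which consolidation of the components is automatic.
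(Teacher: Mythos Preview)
Your proposal is correct and matches the paper's proof essentially line for line: apply Proposition~\ref{prop:Song_minmax}, then use Lemma~\ref{lem:WFdisjoint}/Remark~\ref{rmk:WFSameArea} (which apply to a weak core) to conclude that when $N\geq 2$ all components are stable with common area, and set $\Gamma_p=\Gamma_1$, $m_p=\sum_i m_i$. The paper's write-up is slightly terser (it folds the $N=1$ and $N\geq 2$ cases together by observing that $m_p>1$ forces either $N>1$ or $m_1>1$, each implying stability), but the content is identical.
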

\begin{proof}
	Fix $p \in \N$. By Proposition \ref{prop:Song_minmax}, there exist disjoint connected minimal hypersurfaces $\Gamma_1,\dots,\Gamma_N \subset \Int(U)$ with positive integers $m_1,\dots,m_N$ such that
	\[
	\widetilde{\omega}_p(U,g) = \sum_{i=1}^N m_i \area_g(\Gamma_i).
	\]
	Since $(U,g)$ is a weak core and $\Gamma_1,\dots,\Gamma_N$ are disjoint, we must have that
	\[
	\area_g(\Gamma_1) = \area_g(\Gamma_2) = \cdots = \area_g(\Gamma_N).
	\]
	by Remark \ref{rmk:WFSameArea}. Now, let $\Gamma = \Gamma_1$ and $m = m_1 + \cdots + m_N$ so that
	\[
	\widetilde{\omega}_p(U,g) = \sum_{i=1}^N m_i \area_g(\Gamma_i) = m \area_g(\Gamma).
	\]
	Finally, note if $m > 1$, then either $N > 1$ or $m_1 > 1$. In the first case, $\Gamma$ is stable by Remark \ref{rmk:WFSameArea}, while $\Gamma = \Gamma_1$ must be stable by Proposition \ref{prop:Song_minmax} in the second case.
\end{proof}

The largest area boundary component of $U$ plays an important role in Song's proof. This lemma is proved exactly like \cite[Lemma 12]{So}, but using Lemma \ref{lem:MinimizeAreaHomology}.
\begin{lemma}[\cite{So}] \label{lem:WCLargerThanBoundary}
	Let $(U,g)$ be a weak core, and let $A$ be the area of the largest connected component of $\partial U$. For any minimal hypersurface $\Gamma \subset \Int(U)$,
	\[
	\area_g(\Gamma) > A.
	\]
\end{lemma}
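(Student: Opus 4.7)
The plan is to argue by contradiction: assume there is a connected minimal hypersurface $\Gamma \subset \Int(U)$ with $\area_g(\Gamma) \leq A$, and produce a minimal hypersurface in $\Int(U)$ that is contracting on a side, violating the weak core hypothesis. Let $\Sigma \subset \partial U$ be a largest area connected component, so $\area_g(\Sigma) = A$.

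First I would cut $U$ along $\Gamma$ by forming the metric completion of $U \setminus \Gamma$ and taking $N$ to be the connected component having $\Sigma$ as a boundary component. Then $\partial N$ consists of $\Sigma$, possibly other components of $\partial U$ that happen to lie in $N$, together with one or two isometric copies $\Gamma' \subset \partial N$ of $\Gamma$ (one if $\Gamma$ is separating, two if $\Gamma$ is non-separating in $N$). Because $U$ is a weak core, $\Gamma$ is not contracting on either side, so any copy $\Gamma'$ is non-contracting as a boundary component of $N$; and since $\Sigma \subseteq \partial N \setminus \Gamma'$ one has
\[
\area_g(\Gamma') = \area_g(\Gamma) \leq A = \area_g(\Sigma) \leq \area_g(\partial N \setminus \Gamma').
\]
This puts me in position to apply Lemma \ref{lem:MinimizeAreaHomology} to $N$ with $\Gamma'$ playing the role of the non-contracting boundary component.

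The lemma then gives two possibilities. In the first, there is a minimal hypersurface in $\Int(N) \subset \Int(U)$ that is contracting on a side, which directly contradicts that $(U,g)$ is a weak core. In the second, $N$ is diffeomorphic to $\Gamma' \times [0,1]$ via a minimal foliation with $\Gamma'$ as one end. Since $\partial N$ then has exactly two connected components, this case forces $N$ to contain no other boundary components of $\partial U$ apart from $\Sigma$, and forces $\Sigma$ itself to be the opposite leaf of the foliation. But then $\Sigma$ admits a one-sided tubular neighborhood inside $N$ consisting of minimal hypersurfaces, which is incompatible with the fact that $\Sigma$ is contracting as a component of $\partial U$ on the side facing $N$.

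The main obstacle is really just the second alternative of Lemma \ref{lem:MinimizeAreaHomology}: I need to verify that the contracting-boundary hypothesis on $\partial U$ genuinely rules it out, which reduces to the elementary observation that a leaf of a minimal foliation cannot simultaneously be contracting on the foliated side. Once both alternatives are eliminated, every connected minimal hypersurface in $\Int(U)$ must satisfy the strict inequality, and for disconnected $\Gamma$ the same argument applied to each component shows that every component already has area exceeding $A$, which is more than enough to conclude $\area_g(\Gamma) > A$.
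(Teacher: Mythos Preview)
Your argument is correct and is precisely the approach the paper intends: the paper does not spell out the proof but says it ``is proved exactly like \cite[Lemma 12]{So}, but using Lemma \ref{lem:MinimizeAreaHomology},'' and that is exactly what you do---cut along $\Gamma$, apply Lemma \ref{lem:MinimizeAreaHomology} to the non-contracting copy $\Gamma'$, and rule out both alternatives using the weak core hypothesis. One cosmetic point: in the foliated alternative you could note more generally that the second boundary component of $N$ is \emph{some} component of $\partial U$ (necessarily $\Sigma$, since $N$ was chosen to contain $\Sigma$ and $\partial N$ has only two components), and any such component is contracting; your write-up already lands on this, just a bit implicitly.
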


\begin{theorem}\label{thm:WC->large_area}
	If $(U,g)$ is a weak core, then there exist connected minimal hypersurfaces in $U$ of arbitrarily large area.
\end{theorem}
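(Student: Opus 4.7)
The plan is to argue by contradiction: suppose every connected minimal hypersurface in $\Int(U)$ has area at most some $B > 0$, and derive a contradiction with Song's cylindrical min-max. A weak core satisfies the hypotheses of Section \ref{sec:NoContracting}, so Corollary \ref{lem:WFStableAreas} gives a finite list $\alpha_1,\ldots,\alpha_N$ of possible stable areas, and by Lemma \ref{lem:WCLargerThanBoundary} the quantity $\alpha^* := \min_i \alpha_i$ strictly exceeds $A$, the area of the largest connected component of $\partial U$. Corollary \ref{cor:WC_minmax} writes each $\widetilde{\omega}_p(U,g) = m_p\area_g(\Gamma_p)$ for a connected minimal $\Gamma_p \subset \Int(U)$, with $m_p = 1$ when $\Gamma_p$ is unstable; since $\widetilde{\omega}_p \to \infty$ while an unstable $\Gamma_p$ would satisfy $\widetilde{\omega}_p = \area_g(\Gamma_p) \leq B$, for all sufficiently large $p$ the hypersurface $\Gamma_p$ must be stable and so $\widetilde{\omega}_p(U,g) \in \bigcup_{i=1}^N \alpha_i \N$.

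The heart of the proof is an adaptation of the Lusternik-Schnirelmann argument of Aiex (Lemma \ref{lem:NotSweepout} and Proposition \ref{prop:AiexResult}) to Song's cylindrical min-max on $\cC(U) = U \cup \partial U \times [0,\infty)$. Following the proof of Proposition \ref{prop:StableTopology}, the subspace of stable minimal cycles supported in $\Int(U)$ with mass at most $\omega$ has vanishing $\Z_2$-cohomology in degrees at least $\omega/\alpha^*$. Transferred through the compact exhaustion $K_1 \subset K_2 \subset \cdots$ of $\cC(U)$ used to define $\widetilde{\omega}_p$, this yields: any min-max sequence of $p$-sweepouts realizing $\widetilde{\omega}_p$ which remains in a small $\bF$-neighborhood of the space of stable cycles supported in $\Int(U)$ cannot be a $p$-sweepout once $p \geq \widetilde{\omega}_p/\alpha^*$. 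On the other hand, Song's explicit construction, sweeping the cylindrical end $\partial U \times [0,\infty)$ by parallel translates of a largest boundary component, gives the linear upper bound $\widetilde{\omega}_p(U,g) \leq pA + O(1)$. Since $\alpha^* > A$, we obtain $\widetilde{\omega}_p/\alpha^* \leq pA/\alpha^* + O(1) < p$ for all sufficiently large $p$, so the $p$-sweepout hypothesis forces min-max sequences to escape every neighborhood of stable cycles. Consequently $\Gamma_p$ must be unstable, forcing $\widetilde{\omega}_p = \area_g(\Gamma_p) \to \infty$, which contradicts the assumption.

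The main obstacle is the adaptation of the Lusternik-Schnirelmann deformation machinery of \cite{Ai} (specifically the Vanishing Lemma and Pitts' combinatorial deformation argument) to Song's cylindrical min-max defined via compact exhaustions. This passage requires care on two fronts: one must verify that the deformation arguments, which involve cubical subcomplexes and Pitts' combinatorial method, behave well under the limit over $K_i$; and one must check that limit varifolds of sweepouts which escape neighborhoods of stable cycles are indeed supported in $\Int(U)$ rather than drifting off into the cylindrical end. Both points are essentially handled by Song's original framework in \cite{So}, combined with the multiplicity-one theory of \cite{Zh, SWZ}, and so once the technical setup is in place the remainder of the argument follows the template of the weakly Frankel case in Section \ref{WeaklyFrankelCase}.
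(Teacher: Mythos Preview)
Your first paragraph already sets up exactly what is needed, and the paper finishes from there with a one-line arithmetic argument rather than the Lusternik--Schnirelmann machinery you propose. Once you know that for all large $p$ the width satisfies $\widetilde{\omega}_p(U,g) = m_p\alpha_{i_p}$ for some $\alpha_{i_p} \in \{\alpha_1,\dots,\alpha_N\}$ with each $\alpha_i > A$, the paper invokes Song's growth estimates \cite[Theorem 8]{So}: the cylindrical widths satisfy both the upper bound $\widetilde{\omega}_p \leq Ap + Bp^{1/(n+1)}$ and the gap $\widetilde{\omega}_{p+1} - \widetilde{\omega}_p \geq A$. Song's arithmetic \cite[Lemma 13]{So} then says that if the set $\{\area_g(\Gamma_p)\}$ is finite with every element strictly larger than $A$, these two constraints are incompatible (one obtains $\widetilde{\omega}_p > (A+\varepsilon)p$ for large $p$). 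Hence $\{\area_g(\Gamma_p)\}$ is infinite, so infinitely many $\Gamma_p$ are unstable, so $\area_g(\Gamma_{p_k}) = \widetilde{\omega}_{p_k} \to \infty$. No topology of cycle spaces is needed; the linear gap $\widetilde{\omega}_{p+1} - \widetilde{\omega}_p \geq A$ replaces the role that the Lusternik--Schnirelmann jump played in the weakly Frankel case.

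Your proposed adaptation of the Aiex machinery to $\cC(U)$ has a genuine gap beyond the technical issues you flag. The Lusternik--Schnirelmann argument requires controlling the topology of \emph{all} stable varifolds the sweepout can approach, not just those supported in $\Int(U)$. But on $\cC(U)$ (or any compact exhaustion $K_i$), the cylindrical end is foliated by the stable minimal slices $\partial U \times \{t\}$, whose largest component has area exactly $A$. By Lemma~\ref{lem:FullFoliationSpace} this contributes a factor of roughly $\RP^{\omega/A}$ to the stable cycle space, so the cohomological dimension bound becomes $\omega/A$ rather than $\omega/\alpha^*$; since $\widetilde{\omega}_p \sim Ap$, this gives $\widetilde{\omega}_p/A \sim p$ and the inequality $p > \widetilde{\omega}_p/\alpha^*$ no longer forces escape from the full stable set. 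If instead you restrict $\Lambda_\omega$ to stable cycles in $\Int(U)$, then the complementary subcomplex $Z_i$ may well have its critical varifolds supported in the cylindrical end, and Song's confinement argument does not automatically apply to this restricted family. Either way, the route is blocked, and the paper's elementary arithmetic argument is both correct and far shorter.
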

\begin{proof}
	We prove by contradiction. Assume that the area of stable minimal hypersurfaces in $(U,g)$ is uniformly bounded. Let $A$ denote the area of the largest component of $\partial U$. Then by \cite[Theorem 9]{So}, there exists some constant $B$ (which is independent of $p$) such that
	\[
	Ap \leq \widetilde{\omega}_p(U,g) \leq Ap+Bp^{\frac{1}{n+1}} \qquad \text{and} \qquad \widetilde{\omega}_{p+1}(U,g) \geq \widetilde{\omega}_p(U,g) + A
	\]
	for all $p$. By Corollary \ref{cor:WC_minmax}, we can find a connected minimal hypersurface $\Gamma_p$ contained the interior of $U$ along with a positive integer $m_p$ such that
	\[
	\widetilde{\omega}_p(U,g) = m_p \area_g(\Gamma_p)
	\]
	where $m_p = 1$ whenever $\Gamma_p$ is unstable. Moreover, by Lemma \ref{lem:WCLargerThanBoundary}, we have that $\area_g(\Gamma_p) > A$ for each $p$. Consider the set of areas (ignoring multiplicities) which appear in this sequence, that is, the set
	\[
	\{ \area_g(\Gamma_p) : p \in \N \}.
	\]
	Note this set must be infinite because if not, then by \cite[Lemma 14]{So}, there would exist an $\varepsilon > 0$ such that for all $p$ large enough,
	\[
	\widetilde{\omega}_p(U,g) > (A+\varepsilon)p
	\]
	which contradicts the growth of the widths. Recall there are only finitely many distinct areas of stable minimal hypersurfaces in $(U,g)$ by Lemma \ref{lem:WFStableAreas} and our assumed stable area bound. Therefore, we can find an increasing sequence $p_k$ such that each $\Gamma_{p_k}$ is unstable so that $m_{p_k} = 1$, and thus,
	\[
	Ap_k \leq \widetilde{\omega}_{p_k}(U,g) = \area_g(\Gamma_{p_k})
	\]
	for all $k$. And so, $\area_g(\Gamma_{p_k}) \to \infty$ as $k \to \infty$. This is a contradiction.
\end{proof}

\section{Accumulating case} \label{AccumlatingCase}

\subsection{Accumulating minimal hypersurfaces}

In this section, we will assume that $(M,g)$ is not weakly Frankel, does not contain a weak core, and has a uniform area bound for connected stable minimal hypersurfaces. Otherwise, we can find minimal hypersurfaces of arbitrarily large area by Sections \ref{WeaklyFrankelCase} and \ref{CoreCase}.

The following lemma is proven just like \cite[Lemma 12]{So}, but using Lemma \ref{lem:MinimizeAreaHomology}.
\begin{lemma}[\cite{So}] \label{lem:CutAlongContracting}
	Let $(N,g)$ be compact manifold with (possibly empty) contracting minimal boundary, and let $\Gamma \subset \Int(N)$ be a minimal hypersurface which is contracting on a side. We can cut $N$ along $\Gamma$ (and possibly finitely many other minimal hypersurfaces) to obtain a compact manifold $(N',g)$ with contracting minimal boundary with a component $\Sigma \subseteq \partial N'$ isometric to $\Gamma$.
\end{lemma}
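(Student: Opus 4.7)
The proof closely follows Song's proof of \cite[Lemma 11]{So}, with Lemma \ref{lem:MinimizeAreaHomology} playing the role of his area-minimization argument. I outline the strategy and highlight the key steps.

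First I cut $N$ along $\Gamma$ to produce a new compact manifold $N_1$ whose boundary contains the original components of $\partial N$ (still contracting) together with two new copies $\Gamma^+, \Gamma^-$ of $\Gamma$, where $\Gamma^+$ lies on the contracting side of $\Gamma$ and $\Gamma^-$ on the non-contracting side. If $\Gamma$ separates $N$, the connected component of $N_1$ containing $\Gamma^+$ already has entirely contracting boundary and serves as the desired $N'$.

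If $\Gamma$ does not separate, $N_1$ is connected and I must remove the non-contracting boundary component $\Gamma^-$. To this end I apply Lemma \ref{lem:MinimizeAreaHomology} with $\Sigma = \Gamma^-$; the area hypothesis is automatic since $\Gamma^+ \subseteq \partial N_1 \setminus \Gamma^-$ and $\area_g(\Gamma^+) = \area_g(\Gamma^-)$. The foliation alternative of Lemma \ref{lem:MinimizeAreaHomology} cannot arise: it would force $N_1$ to be a product cylinder between $\Gamma^-$ and a single other boundary component, which, since $\Gamma^+ \subset \partial N_1$, would have to be $\Gamma^+$; but then $\Gamma$ would carry a minimal foliation on its contracting side, contradicting Lemma \ref{TubularNeighborhood}. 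Hence I obtain a contracting minimal hypersurface $\Gamma_1 \subset \Int(N_1)$ whose contracting side (by the construction in the proof of Lemma \ref{lem:MinimizeAreaHomology}) faces $\Gamma^+$. I cut $N_1$ along $\Gamma_1$, keep the component containing $\Gamma^+$, and iterate. At each stage, all boundary components of the retained component are contracting except possibly the copy of $\Gamma^-$ that has not yet been separated off.

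The main obstacle is showing the iteration terminates in finitely many steps. The cut hypersurfaces $\Gamma_1, \Gamma_2, \ldots$ are pairwise disjoint and contracting (hence stable), so by the standing uniform area bound at the start of Section \ref{AccumlatingCase} together with Sharp's compactness theorem \cite{Sh}, any non-terminating sequence would subconverge smoothly to a stable limit $\Gamma_\infty$ accumulated on one side by contracting minimal hypersurfaces. The nested components of the iteration would then exhaust a closed region whose boundary consists of contracting minimal hypersurfaces and whose interior contains no contracting minimal hypersurface, i.e.\ a weak core, contradicting the standing assumption of Section \ref{AccumlatingCase} that $(M,g)$ contains no weak core. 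Thus the procedure terminates after finitely many cuts and the final retained component is the desired $N'$.
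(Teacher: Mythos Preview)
Your termination argument has a genuine gap. You assert that a non-terminating iteration would ``exhaust a closed region whose interior contains no contracting minimal hypersurface, i.e.\ a weak core,'' but nothing you have written forces this: at each stage you pick \emph{some} $\Gamma_i$ produced by Lemma~\ref{lem:MinimizeAreaHomology}, not a canonical one (for instance the one closest to $\Gamma^+$), so the intersection $\bigcap_i N_i$ may perfectly well still contain contracting minimal hypersurfaces. Invoking the standing hypotheses of Section~\ref{AccumlatingCase} (no weak core, uniform stable area bound) is also inappropriate here --- the lemma is Song's and is meant to hold for an arbitrary compact $(N,g)$ with contracting minimal boundary, independently of those assumptions.

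In fact the iteration is unnecessary, and this is where your reading of Lemma~\ref{lem:MinimizeAreaHomology} falls short. The reason one follows its \emph{proof} (rather than just its statement) is that the proof produces the full area-minimizer $S$ in the class $[\Gamma^-]\in H_n(N_1;\Z_2)$. Since $[S]=[\Gamma^-]$, the chain $S+\Gamma^-$ bounds, so $S$ already separates $\Gamma^-$ from $\Gamma^+$ in one stroke; a single connected $\Gamma_1$ as in your scheme need not separate, and if it does not you have actually \emph{added} a non-contracting boundary component rather than removed one. Moreover, each interior component of $S$ is locally area-minimizing, so after sliding it by its maximal minimal foliation toward the $\Gamma^+$ side the endpoint cannot be expanding or accumulating there: either would produce a homologous nearby slice of strictly smaller area, contradicting minimality. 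Hence that endpoint is contracting on the $\Gamma^+$ side --- this is the genuine justification for your orientation claim, which does not follow just ``by the construction in the proof of Lemma~\ref{lem:MinimizeAreaHomology}.'' Cutting along $\Gamma$ together with these finitely many foliation-extended components of $S$ yields $N'$ directly, with no inductive process and no appeal to the Section~\ref{AccumlatingCase} hypotheses.
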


\begin{definition}
	We say that $U$ is a \emph{Song region} of $(M,g)$ if $(U,g)$ is a compact manifold with nonempty contracting boundary formed by \emph{cutting} $M$ along some collection of disjoint (stable) minimal hypersurfaces $\Sigma_1, \dots, \Sigma_N$, that is, consider some connected component of the metric completion of 
	\[
	M \setminus (\Sigma_1 \cup \cdots \cup \Sigma_N).
	\]
	We say that a Song region $W$ is a \emph{sub-Song region} of a Song region $U$ if $W$ is formed by cutting $U$ along additional disjoint minimal hypersurfaces (contained in $\Int(U)$).
\end{definition}

Like in Section \ref{sec:NoContracting}, given a compact Riemannian manifold $(N,g)$, we define
\[
\cM^S(N) = \{\Sigma \subset \Int(N) : \text{$\Sigma$ connected stable minimal hypersurface in $(N,g)$}\}.
\]
Since we are assuming a uniform area bound for stable minimal hypersurfaces, this space is strongly compact in the smooth topology \cite{SS} (see also \cite{Sh}). We will be interested in various subspaces coming from different Song regions $U$ of $(M,g)$.

Observe our assumption that no Song region $U$ of $(M,g)$ is a weak core implies
\[
\cM^C(U) = \{\Sigma \in \cM^S(U) : \text{$\Sigma$ is contracting on a side}\}.
\]
is always nonempty. Note that we can consider $\cM^C(U)$ as a subspace of the compact space $\cM^S(M)$ of all connected stable minimal hypersurfaces in $(M,g)$. However, note that this subspace is not closed in our case. In fact, if we denote
\[
\cM^A(U) = \{\Sigma \in \cM^S(U) : \text{$\Sigma$ is accumulating on a side}\},
\]
then we have that $\cM^A(U)$ is precisely the limit points of $\cM^C(U)$, see Lemma \ref{lem:accumulating-nearby}. In fact, the limit of minimal surfaces contracting on a side can not be a minimal surfaces that is expanding or has neighborhood locally foliated by minimal surfaces, because we can express the neighborhood of these minimal surfaces by a foliation, and the foliation has either zero mean curvature or the mean curvature pointing away from the minimal surface. In both situation, maximum principle rules out the existence of a minimal surface that is contracting from a side. Thus the closure
\[
\overline{\cM}^C(U) = \cM^C(U) \cup \cM^A(U).
\]

By iteratively applying Lemma \ref{lem:CutAlongContracting}, we can see that $\cM^A(U)$ must always be nonempty as well. In fact, we will look at the largest area minimal hypersurface in $\cM^A(U)$, and either, it will be very pathological, or it will have a nice enough local structure will allow us to find large minimal hypersurfaces. 

\begin{lemma} \label{NoncontractingAccumulating}
	Let $U$ be a Song region of $(M,g)$ as above. Then there exists a stable minimal hypersurface $\Gamma \subset \Int(U)$ which is accumulating on a side and is not contracting on the other side. In other words, $\Gamma \in \cM^A(U) \setminus \cM^C(U)$.
\end{lemma}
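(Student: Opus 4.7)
The plan is to find $\Gamma$ as a maximum-area element of $\cM^A(U)$ and then argue by contradiction that this maximizer cannot also lie in $\cM^C(U)$.

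\textbf{First}, I would verify that $\cM^A(U)\neq\emptyset$ via iterative cutting. Since $U$ is a Song region and, by the standing assumption of this section, no Song region of $(M,g)$ is a weak core, $\cM^C(U)$ is nonempty; pick $\Gamma_1\in\cM^C(U)$ and apply Lemma~\ref{lem:CutAlongContracting} to produce a sub-Song region $U_1\subsetneq U$, which is itself not a weak core, giving $\Gamma_2\in\cM^C(U_1)\subseteq\cM^C(U)$. Iterating yields pairwise disjoint $\Gamma_n\in\cM^C(U)$, uniformly bounded in area. Sharp's compactness \cite{Sh} provides a smoothly convergent subsequence with limit $\Gamma_\infty\in\cM^S(U)$; being accumulated by the distinct $\Gamma_n$, it lies in $\cM^A(U)$.

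\textbf{Second}, I observe that $\cM^A(U)$ is closed in the compact space of stable minimal hypersurfaces of bounded area, because the smooth limit of a sequence in $\cM^A(U)$ is either eventually constant or accumulated by its distinct terms and hence remains in $\cM^A(U)$. Thus the area functional attains its maximum $A^*=\sup_{\cM^A(U)}\area$ at some $\Gamma^*\in\cM^A(U)$.

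\textbf{Third} (the main step), I would argue $\Gamma^*\notin\cM^C(U)$. Suppose for contradiction that $\Gamma^*$ were contracting on some side $A$ and accumulating on the other side $B$. Using Lemma~\ref{TubularNeighborhood} with the parametrization $\varphi:\Gamma^*\times[-\delta,\delta]\to M$, there are minimal parallel slices $\Sigma_k=\varphi(\Gamma^*\times\{r_k\})$ on side $B$ with $r_k\searrow 0$, and on each subinterval $(r_{k+1},r_k)$ the mean curvature of $\varphi(\Gamma^*\times\{t\})$ has a constant sign. \emph{The main obstacle} is that the $\Sigma_k$ individually need not lie in $\cM^A(U)$---a small enough tubular neighborhood of $\Sigma_k$ omits all other $\Sigma_j$ and contains no further minimal hypersurface---so one cannot directly conclude $\area(\Sigma_k)\le A^*$. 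I would resolve this by combining the mean-curvature sign analysis with iterated cutting: the sign pattern determines which side of each $\Sigma_k$ is contracting versus expanding, and when the pattern forces $A(r_k):=\area(\varphi(\Gamma^*\times\{r_k\}))>A^*$ for large $k$, cutting $U$ along $\Sigma_k$ on its outer contracting side produces a sub-Song region $V_k$ whose $\sup_{\cM^C(V_k)}\area$ strictly exceeds $A^*$. If this supremum is not attained in $\cM^C(V_k)$, a maximizing sequence converges to an element of $\overline{\cM^C(V_k)}\setminus\cM^C(V_k)=\cM^A(V_k)\setminus\cM^C(V_k)\subseteq\cM^A(U)$ of area exceeding $A^*$, directly contradicting the maximality of $\Gamma^*$; if it is attained, one iterates within nested sub-Song regions obtained via further applications of Lemma~\ref{lem:CutAlongContracting} to produce the same contradiction in finitely or countably many steps. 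The complementary sign pattern, where $A(r_k)<A^*$, is handled symmetrically by cutting along the inner contracting side of some $\Sigma_k$ and iterating on the sub-Song region containing $\Gamma^*$ on its boundary and the inner accumulation in its interior; careful bookkeeping of the sides on which the cuts are made rules out any consistent configuration in which $\Gamma^*$ is contracting on side $A$, forcing $\Gamma^*\in\cM^A(U)\setminus\cM^C(U)$.
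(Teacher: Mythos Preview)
Your overall strategy aligns with the paper's---find a maximal-area element and show it cannot lie in $\cM^C(U)$---but your third step has a genuine gap that the paper resolves by a different mechanism.

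The paper maximizes area over $\overline{\cM^C(U)}$ rather than over $\cM^A(U)$; this is equivalent a posteriori but matters for the argument, since the proof uses that \emph{every} element of $\cM^C(U)$ has area at most that of the maximizer $\Gamma$. Assuming $\Gamma\in\cM^C(U)$ for contradiction, the paper does not analyze local sign patterns near $\Gamma$. Instead it cuts along the contracting side of $\Gamma$ to get $W_0$ with a boundary component $\Sigma\cong\Gamma$, and then iteratively cuts along the element of $\overline{\cM^C(W_i)}$ that is \emph{closest in Hausdorff distance} to $\Sigma$, showing via compactness that this process terminates in finitely many steps at a region $W$ with $\cM^C(W)=\emptyset$. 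The maximal-area property of $\Gamma$ is then used, via Lemma~\ref{lem:MinimizeAreaHomology}, to verify that every remaining boundary component of $W$ is contracting, so $W$ is a weak core---contradicting the standing hypothesis of the section.

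Your sign-pattern argument, by contrast, does not supply the contradiction. The dichotomy $A(r_k)>A^*$ versus $A(r_k)<A^*$ is neither exhaustive (the sequence can oscillate or equal $A^*$) nor does either branch clearly close: you assert that cutting along $\Sigma_k$ yields $\sup_{\cM^C(V_k)}\area_g>A^*$ without justification, and the ``iteration within nested sub-Song regions'' is left entirely schematic. More fundamentally, an accumulating-on-one-side, contracting-on-the-other configuration is not locally contradictory; the only available global input is the nonexistence of weak cores, and a successful argument must manufacture one. The ``careful bookkeeping'' you allude to would in effect have to reproduce the paper's closest-hypersurface iteration together with the area-comparison appeal to Lemma~\ref{lem:MinimizeAreaHomology}, neither of which appears in your sketch.
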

\begin{proof}
	Since $M$ has no weak core, the set $\cM^C(U)$ is nonempty. By compactness of stable minimal hypersurfaces, we can find $\Gamma$ in the closure of $\cM^C(U)$ such that
	\[
	\area_g(\Gamma) = \sup\{\area_g(\Sigma) : \Sigma \in \cM^C\}.
	\]
	We will show that $\Gamma$ is the desired minimal hypersurface. Note since $\Gamma$ is in the closure of $\cM^C(U)$, it suffices to show that $\Gamma$ is not contracting on a side because then $\Gamma$ must be a limit point of $\cM^C(U)$ and hence also accumulating on a side.
	
	So suppose otherwise that $\Gamma$ is contracting on a side. Let $W_0$ be the connected component of the metric completion of $U \setminus \Gamma$ which has a contracting boundary component $\Sigma$ isometric to $\Gamma$. We will inductively cut $W_0$ along finitely many minimal hypersurfaces to find weak core $W = W_N$ which would give a contradiction. 
	
	Given $W_i$ containing this same boundary component $\Sigma$. By compactness, there is a stable minimal hypersurface $\Sigma_i \in \overline{\cM}^C(W_i)$ such that
	\[
	d_H(\Sigma,\Sigma_i) = \inf\{d_H(\Sigma,\Sigma') : \Sigma' \in \cM^C(W_i)\}
	\]
	where $d_H$ denotes Hausdorff distance in $W_{i}$. Note that this minimizer $\Sigma_i$ has a positive distance from $\Sigma$ by the maximum principle applied to the contracting neighborhood of $\Sigma$. In fact, suppose $\Sigma_i$ is the limit of $\Sigma'_j\in\cM^C(W_i)$. If $d_H(\Sigma,\Sigma_i) =0$, by maximum principle, $\Sigma_i=\Sigma$, which implies that $\Sigma_j'$ converge to $\Sigma$ smoothly. This implies that when $j$ is sufficiently large, $\Sigma_j'$ enters the contracting neighborhood of $\Sigma$, which is a contradiction. Now, we form $W_{i+1}$ by cutting $W_i$ along $\Sigma_i$ and picking the connected component containing $\Sigma$ as a boundary component.
	
	We will now see that the above process must eventually terminate. Otherwise, we obtain a sequence of disjoint stable $\Sigma_i \in \overline{\cM}^C(W_i) \subseteq \overline{\cM}^C(W_0)$. By compactness, there is a smoothly and graphically convergent subsequence. Therefore, we can find $N_0 < N_1 < N_2$ and closed cylindrical regions $R_0$ and $R_1$ such that
	\[
	\partial R_0 = \Sigma_{N_0} \cup \Sigma_{N_1}, \qquad \partial R_1 = \Sigma_{N_1} \cup \Sigma_{N_2}, \qquad R_0 \cap R_1 = \Sigma_{N_1}.
	\]
	Observe that $W_{N_1}$ contains a boundary component isometric to $\Sigma_{N_0}$ which has the cylindrical neighborhood $R_0 \cup R_1$ contained in $W_{N_1}$. Since the $R_0$ part touches a boundary component not equal to $\Sigma$, then $\Sigma_{N_2}$ must strictly closer than $\Sigma_{N_1}$ to $\Sigma$ inside $W_{N_1}$. However, this would contradict our choice of $\Sigma_{N_1}$. 
	
	Therefore, this inductive procedure must give a compact region $W = W_N$ such that $\cM^C(W)$ is empty. Finally, to show that $W$ is weak core---in order to reach a contradiction to our assumption that $\Gamma$ is contracting on a side---we just need to show that the other components of $\partial W$ are also contracting.
	
	So consider any other boundary component $\Sigma' \subseteq \partial W \setminus \Sigma$. Note that
	\[
		\area_g(\Sigma') \leq \area_g(\Gamma) = \area_g(\Sigma) \leq \area_g(W \setminus \Sigma')
	\]
	by the choice of $\Gamma$. By the construction of $W$, there are no contracting minimal hypersurfaces in $\Int(W)$ and $W$ is not foliated by minimal hypersurface (because $\Sigma \subseteq \partial W$ is contracting). Therefore, we reach a contradiction by Lemma \ref{lem:MinimizeAreaHomology}.
\end{proof}

\subsection{Spindle minimal hypersurfaces}

As mentioned, minimal hypersurfaces found by Lemma \ref{NoncontractingAccumulating} may have a nice enough structure which is modeled by Figure \ref{fig:spindle}.
\begin{definition} \label{def:MonotoneSaddle}
	We say a minimal hypersurface $\Gamma \subset (M,g)$ is a \emph{spindle} if there exists $0< \delta$ and a map $\varphi: \Gamma \times [-\delta,\delta] \to M$ such that
	\begin{itemize}
		\item $\varphi$ is a diffeomorphism onto its image,
		\item $\varphi(\Gamma \times \{t\})$ either is minimal or has non-vanishing mean curvature for all $t$,
		\item $\varphi(\Gamma \times \{0\}) = \Gamma$,
		\item $\varphi(\Gamma \times \{-\delta\})$ and $\varphi(\Gamma \times \{\delta\})$ have non-vanishing mean curvature pointing away from $\Gamma$,
		\item $\area_g(\varphi(\Gamma \times \{t\}))$ is weakly increasing for $t \in [-\delta,0]$, and
		\item $\area_g(\varphi(\Gamma \times \{t\}))$ is weakly decreasing for $t \in [0,\delta]$.
	\end{itemize}
\end{definition}

We will show that if $(M,g)$ has a spindle which is accumulating on a side, then we can find a Song region which looks like a weak core after applying certain small conformal perturbations.

\begin{proposition} \label{prop:spindle}
	If $(M,g)$ has a spindle minimal hypersurface $\Gamma$ which is accumulating on a side, then $M$ has minimal hypersurfaces of arbitrarily large area.
\end{proposition}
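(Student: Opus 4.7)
The strategy is to construct, around the spindle $\Gamma$, a Song region that becomes a genuine weak core under an arbitrarily small conformal perturbation of $g$, apply Theorem \ref{thm:WC->large_area} in the perturbed metric, and then pass to a subsequential limit as the perturbation vanishes.

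First, I will use the spindle structure together with the accumulation on one side of $\Gamma$ to construct a Song region $U \subset M$ containing $\Gamma$ in its interior, arranged so that $\Gamma$ has strictly the largest area among connected stable minimal hypersurfaces in $\overline{U}$. On the accumulating side of $\Gamma$, pick a minimal hypersurface $\Gamma'$ close enough to $\Gamma$ that $\area_g(\Gamma') < \area_g(\Gamma)$, which exists because the spindle's outer slices have strictly smaller area and the accumulating sequence, lying beyond the spindle neighborhood, inherits strictly smaller area by smooth convergence and the spindle inequality. Cut along $\Gamma'$ using Lemma \ref{lem:CutAlongContracting}; on the opposite side of $\Gamma$, cut along further minimal hypersurfaces with the same strict area inequality. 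The resulting $U$ is almost a weak core, except for the degenerate-stable central minimal foliation $\varphi(\Gamma \times [0,\varepsilon])$, whose leaves are contracting on their outer sides.

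Second, perturb. Let $u \in C^\infty(M)$ be a nonnegative bump function supported in an arbitrarily small neighborhood of $\varphi(\Gamma \times [0,\varepsilon])$ with $u|_\Gamma > 0$, and set $g_\tau = (1+\tau u)g$ for small $\tau > 0$. Because the positive conformal factor breaks the variational balance along the lapse Jacobi field of the foliation, a standard implicit function theorem argument produces a single isolated strictly stable $g_\tau$-minimal hypersurface $\Gamma_\tau$ near $\Gamma$---the central minimal foliation has collapsed---and the strict area monotonicity on the outer parts of the spindle persists for $\tau$ small. Cutting $U$ along $\Gamma_\tau$ yields a sub-Song region $U_\tau$ of $(M, g_\tau)$ whose new boundary component $\Gamma_\tau$ is strictly contracting; the maximality of $\area_g(\Gamma)$ in $\overline U$, combined with smallness of $\tau$ and continuity of the relevant areas in $\tau$, then ensures that $U_\tau$ contains no interior contracting $g_\tau$-minimal hypersurface, so $U_\tau$ is a weak core in $(M,g_\tau)$.

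Finally, apply Theorem \ref{thm:WC->large_area} to $(U_\tau, g_\tau)$: for each $\tau$, there are connected $g_\tau$-minimal hypersurfaces in $\Int(U_\tau)$ of $g_\tau$-area at least any prescribed threshold. Given $A > 0$, pick such a hypersurface $\Sigma_\tau$ coming from a fixed min-max level $p = p(A)$, so that the Morse index of $\Sigma_\tau$ is bounded by $p$ uniformly in $\tau$ by Corollary \ref{cor:WC_minmax}. Since $g_\tau \to g$ smoothly, Sharp's compactness theorem \cite{Sh} together with Zhou's multiplicity-one result \cite{Zh} produces a subsequential smooth limit $\Sigma \subset (M,g)$ which is a connected embedded $g$-minimal hypersurface of $g$-area at least $A$; letting $A \to \infty$ finishes the proof. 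The main obstacle lies in Step 2: one must verify that the perturbation does not silently turn one of the accumulating (non-contracting) minimal hypersurfaces in $\Int(U) \setminus \Gamma$ into an interior contracting $g_\tau$-minimal hypersurface. This requires keeping $u$ supported only near the central foliation of the spindle and using the strict maximality of $\area_g(\Gamma)$ arranged in Step 1, together with smooth compactness to propagate the bound to all sufficiently small $\tau$.
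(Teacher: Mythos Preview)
Your Step~2 has a genuine gap. The accumulating side of the spindle already contains infinitely many minimal leaves $\Gamma_s$ (for $s$ approaching $0$ from the accumulating side) which are \emph{contracting} on the side facing $\Gamma$: since area is strictly monotone along the spindle foliation, the mean curvature of the nearby non-minimal leaves points toward $\Gamma_s$. These contracting leaves lie in $\Int(U)$ between your cut $\Gamma'$ and $\Gamma$, and since your conformal bump $u$ is supported only near the central foliation $\varphi(\Gamma\times[0,\varepsilon])$, they are untouched by the perturbation: they remain $g_\tau$-minimal and $g_\tau$-contracting. Hence, whichever component $U_\tau$ you take after cutting along $\Gamma_\tau$, it is \emph{not} a weak core, and Theorem~\ref{thm:WC->large_area} does not apply. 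Your closing remark misdiagnoses the obstacle: the problem is not that the perturbation might turn a non-contracting hypersurface into a contracting one, but that there were already infinitely many contracting ones present in $g$, outside the support of $u$.

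The paper's proof avoids this by perturbing on the \emph{entire} spindle neighborhood $\varphi(\Gamma\times[s_0,\delta])$ (using \cite[Lemma~23]{So2}) rather than just the central foliation: in the perturbed metric $h^{(i)}$, $\Gamma$ becomes \emph{unstable} and every other leaf $\Gamma_t$ with $t\in(s_0,0)\cup(0,\delta)$ acquires mean curvature pointing \emph{away} from $\Gamma$. This destroys all the accumulating minimal leaves and, by the maximum principle, forces every $h^{(i)}$-minimal hypersurface in $\Int(U)$ to intersect $\Gamma$. Separately, the paper first shows (via a compactness/Jacobi-field argument you omit) that no $g$-stable minimal hypersurface in $\Int(U)$ intersects $\Gamma$, so any two $g$-minimal hypersurfaces intersecting $\Gamma$ must intersect each other. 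One then runs Song's min-max directly on $(U,h^{(i)})$---without cutting along $\Gamma$ and without ever producing a weak core---and passes the resulting varifolds to the limit; the intersection property forces connected support, and one concludes as in Theorem~\ref{thm:WC->large_area}.
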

\begin{proof}
	If $\Gamma$ is a spindle, then there is a $\varphi: \Gamma \times [-\delta,\delta] \to M$ as in Definition \ref{def:MonotoneSaddle}. Let $\Gamma_t := \varphi(\Gamma \times \{t\})$. If $\Gamma$ is accumulating on a side, we can also assume that $\Gamma = \Gamma_0$ and that there exist arbitrarily small $s \in (-\delta,0)$ such that $\Gamma_s$ is minimal and contracting inside $\phi(\Gamma \times [s,0])$.
	
	Following \cite[Section 3.2]{So2}\footnote{In \cite{So2}, Song works with manifolds which are \emph{thick-at-infinity}, but such manifolds include the class of compact manifolds with minimal boundary.}, we can run level set flow on $\phi(\Gamma \times [-\delta,\delta])$ to find a compact region $W$ containing $\phi(\Gamma \times [-\delta,\delta])$ where $\partial W$ is (possibly empty) minimal and contracting. Note, by the maximum principle, the only minimal hypersurfaces disjoint from $\Gamma$ inside $\Int(W)$ are of the form $\Gamma_t$ for some $t \in (-\delta,\delta)$.
	
	First, we will show that there exists a $t_0 \in [-\delta,0)$ such that any stable minimal hypersurface $\Sigma$ which intersects $\Gamma$ must then also intersect $\Gamma_{t_0}$. Otherwise, we can find a sequence of stable minimal hypersurfaces $\Sigma_k$ intersecting $\Gamma$ but disjoint from $\Gamma_{t_k}$ for some negative sequence $t_k \to 0$. This implies that $\Sigma_k$ converges to $\Gamma$ by the maximum principle. However, since $\Sigma_k$ intersects $\Gamma$ for all $k$, we can construct a Jacobi field for $\Gamma$ that changes sign which contradicts the fact that $\Gamma$ is stable.
	
	So let $U$ be the connected component containing $\Gamma$ of the metric completion of $W \setminus \Gamma_{s_0}$ where $s_0 \in (t_0,0)$ such that $\Gamma_{s_0}$ is minimal and contracting inside $\phi(\Gamma \times [s_0,0])$. Note $U$ is a compact region with nonempty minimal contracting boundary. Furthermore, by construction, $U$ contains no stable minimal hypersurfaces which intersect $\Gamma$. This implies---by a standard area minimization argument---that any two minimal hypersurfaces in $U$ which intersect $\Gamma$ must also intersect each other.
	
	Now although this $(U,g)$ is not a weak core, we will see that from the point of view of min-max, can think of it as being so. By \cite[Lemma 4]{So}, we can find a sequence of metrics $h^{(i)}$ converging smoothly to $g$ such that with respect to $h^{(i)}$,
	\begin{itemize}
		\item $g \equiv h^{(i)}$ on $U \setminus \phi(\Gamma \times [s_0,\delta])$,
		\item $\partial U$ is still minimal and contracting,
		\item $\Gamma = \Gamma_0$ is an unstable minimal hypersurface, and
		\item $\Gamma_t$ has nonzero mean curvature pointing away from $\Gamma$ for $t \in (s_0,0) \cup (0,\delta)$.
	\end{itemize} 
	
	The maximum principle implies that for all $i$, any minimal hypersurface in $(\Int(U),h^{(i)})$ must intersect $\Gamma$. So by Proposition \ref{prop:Song_minmax}, for each $p$ and $i$, we can find a $h^{(i)}$-stationary integral varifold $V_p^{(i)}$ contained in $\Int(U)$ with
	\[
	\widetilde{\omega}_p(U,h^{(i)}) = \|V_p^{(i)}\|(U)
	\] 
	and whose support is a smooth minimal hypersurface with index at most $p$ such that if the multiplicity of any component is more than 1, then that component is stable. For fixed $p$, by \cite[Theorem A.6]{Sh}, after picking a subsequence, $V_p^{(i)}$ converges to some $g$-stationary integral varifold $V_p$ with smooth minimal support and 
	\[
	\|V_p\|(U) = \lim_{i \to \infty} \|V_p^{(i)}\|(U) = \lim_{i \to \infty} \widetilde{\omega}_p(U,h^{(i)}) = \widetilde{\omega}_p(U,g)
	\]
	because the widths vary continuously in the metric. Since each component of $V_p^{(i)}$ intersects $\Gamma$, then so does each component of $V_p$ which implies that the support of $V_p$ is connected. Moreover, $V_p$ must either be an integer multiple $m_p$ of $\Gamma$ or is unstable and hence has multiplicity $m_p = 1$ by \cite[Theorem A.6]{Sh}. 
	
	Now we conclude that there must be minimal hypersurfaces of arbitrarily large area by the same argument in Theorem \ref{thm:WC->large_area}. Again, by \cite[Theorem 9]{So}, there exists some constant $B$ (which is independent of $p$) such that
	\[
	Ap \leq \widetilde{\omega}_p(U,g) \leq Ap+Bp^{\frac{1}{n+1}} \qquad \text{and} \qquad \widetilde{\omega}_{p+1}(U,g) \geq \widetilde{\omega}_p(U,g) + A
	\]
	for all $p$. By the previous part of the proof,  $\widetilde{\omega}_p(U,g)$ is either realized by $m_p \area_g(\Gamma)$ or $\area(\Sigma_p)$ for some unstable minimal hypersurface $\Sigma_p$. By \cite[Lemma 14]{So}, $\widetilde{\omega}_p(U,g)=m_p \area_g(\Gamma)$ can not hold for all sufficiently large $p$. Thus,  there must be minimal hypersurfaces of arbitrarily large area.
\end{proof}

\subsection{Proof of main theorem}
As mentioned, the minimal hypersurfaces found by Lemma \ref{NoncontractingAccumulating} may be very pathological as in Definition \ref{def:non-monotonic}. Note that non-monotonic minimal hypersurfaces are necessarily accumulating on a side. Note such minimal hypersurfaces can possibly exist. See Appendix \ref{sec:example} for an example. So given a Song region $U$, consider
\[
\cM^N(U) = \{\Sigma \in \cM^A(U) : \text{$\Sigma$ is non-monotonic}\},
\]
and let $\cM^N$ be the set of all non-monotonic mininal hypersurfaces in $(M,g)$.

\begin{proposition} \label{prop:nospindle}
	Let $(M,g)$ be as above, and suppose there are no spindle minimal hypersurfaces which are accumulating on a side. Then the space $\cM^N$ of all non-monotonic minimal hypersurfaces is homeomorphic to the Cantor set.
\end{proposition}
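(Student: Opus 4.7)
The plan is to verify Brouwer's characterization of the Cantor set for $\cM^N$: namely that $\cM^N$ is nonempty, compact, metrizable, perfect, and totally disconnected. The ambient space $\cM^S(M)$ of connected stable minimal hypersurfaces is compact and metrizable in the smooth graphical topology, by the uniform area bound together with the compactness theory of \cite{SS, Sh}, so it suffices to check the remaining four properties for $\cM^N \subseteq \cM^S(M)$.

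For nonemptiness and closedness, Lemma \ref{NoncontractingAccumulating} applied to $U = M$ produces a stable $\Gamma$ that is accumulating on one side and not contracting on the other. The key claim is that $\Gamma$ must be non-monotonic: if it were monotonic, then combining the local classification of one-sided tubular neighborhoods (Lemma \ref{TubularNeighborhood}), the area maximality of $\Gamma$ within $\overline{\cM^C(M)}$, and the accumulating sequence of stable minimal leaves on one side, one reconstructs exactly the data of a spindle accumulating on a side, contradicting the hypothesis. The same reasoning shows that a smooth limit of non-monotonic surfaces cannot pick up a monotone one-sided neighborhood without producing such a spindle, so $\cM^N$ is closed in $\cM^S(M)$ and therefore compact.

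To see $\cM^N$ has no isolated points, fix $\Gamma \in \cM^N$ and pick a sequence of stable minimal leaves $\Sigma_k \to \Gamma$ on the accumulating side. Each $\Sigma_k$ cuts off a sub-Song region $R_k$ trapping $\Gamma$, and running the nonemptiness argument inside $R_k$ produces $\Gamma_k \in \cM^N$ with $\Gamma_k \subset R_k$, forcing $\Gamma_k \to \Gamma$ with $\Gamma_k \neq \Gamma$. For total disconnectedness, given distinct $\Gamma, \Gamma' \in \cM^N$: when $\area_g(\Gamma) \neq \area_g(\Gamma')$ the preimage under the continuous area functional of an open interval containing $\area_g(\Gamma)$ but not $\area_g(\Gamma')$ provides a clopen separation; when the areas agree, $\Gamma$ and $\Gamma'$ must be disjoint (any minimal foliation class is monotonic, so two distinct non-monotonic leaves cannot lie in a common class), so a nearby accumulating leaf of $\Gamma$ can be chosen close enough to $\Gamma$ to avoid $\Gamma'$, and cutting along it produces a Song region whose non-monotonic contents form a clopen neighborhood of $\Gamma$ in $\cM^N$ avoiding $\Gamma'$. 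The main technical obstacle throughout is the recurring first step: upgrading monotonicity of an accumulating $\Gamma$ to the full strict spindle structure of Definition \ref{def:MonotoneSaddle}, which requires combining area maximality, the maximum principle, and a careful analysis of where the one-sided neighborhood can be foliated versus where the area must move strictly monotonically.
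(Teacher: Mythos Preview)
Your strategy of verifying Brouwer's characterization matches the paper, and your perfectness argument (producing nested Song regions $U_k$ around a given $\Gamma\in\cM^N$ and finding $\Gamma_k\in\cM^N(U_k)$) is essentially the same. There are, however, two places where your argument diverges from the paper in ways that introduce gaps.

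First, you cannot apply Lemma~\ref{NoncontractingAccumulating} to $U=M$, since $M$ has empty boundary and is not a Song region; the paper instead shows $\cM^N(U)\neq\emptyset$ for \emph{every} Song region $U$. More substantively, you claim the $\Gamma$ produced by that lemma is itself non-monotonic, arguing that otherwise area-maximality and the accumulating side force $\Gamma$ to be a spindle. The paper does not argue this way: if $\Gamma$ is not non-monotonic, it observes that the non-accumulating, non-contracting side must be locally foliated by minimal hypersurfaces, extends that foliation maximally to some $\Gamma_\varepsilon$, and shows $\Gamma_\varepsilon\in\cM^N$ (otherwise $\Gamma$, sitting at one end of the foliated interval $[0,\varepsilon]$, would be a spindle). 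Your direct route runs into trouble because Definition~\ref{def:MonotoneSaddle} demands \emph{strict} monotonicity of area on the two outer intervals, and the accumulating side of $\Gamma$ may contain sub-intervals which are themselves foliated by minimal leaves (on which area is constant); weak monotonicity plus area-maximality do not rule this out. Passing to the end of a maximal foliation, as the paper does, is exactly what positions the candidate hypersurface correctly in the spindle picture---and is presumably the ``careful analysis'' you flag as the main obstacle but do not carry out.

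Second, your total-disconnectedness argument asserts that if $\Gamma,\Gamma'\in\cM^N$ have equal area then they must be disjoint, since ``two distinct non-monotonic leaves cannot lie in a common foliation class.'' But failing to lie in a common foliation class does not imply disjointness; Lemma~\ref{lem:WFdisjoint} gives the opposite implication and only in the weakly Frankel setting, which does not apply here. The paper simply asserts total disconnectedness; the clopen neighborhoods one actually wants come directly from the nested Song regions $U_k$ already built in the perfectness step, with no appeal to the area functional.
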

\begin{proof}
	First, we will show that $\cM^N$ is nonempty. In fact, we will show that $\cM^N(U)$ is non-empty for all Song regions $U$. Consider a minimal hypersurface $\Gamma$ given by Lemma \ref{NoncontractingAccumulating} so that $\Gamma$ is accumulating on a side but not contracting on the other. 
	
	If the area function of the accumulating side of $\Gamma$ is not weakly monotone, then $\Gamma$ is non-monotonic. Now we consider the case that the area function of the accumulating side of $\Gamma$ is weakly monotone. Note that from the construction in Lemma \ref{NoncontractingAccumulating}, $\Gamma$ has largest area among all the nearby minimal hypersurfaces that is contracting on a side, the area function of the accumulating side of $\Gamma$ is weakly increasing as approaching $\Gamma$. 
	
	If the other side of $\Gamma$ is expanding, then we get a spindle, which is a contradiction. If the other side of $\Gamma$ is accumulating, by the same reason as above, $\Gamma$ is either non-monotonic or the area function of the the otherg side of $\Gamma$ is weakly increasing as approaching $\Gamma$, and hence we get a spindle, which is also a contradiction. So the only possible neighborhood for the other side is locally foliated by minimal hypersurfaces.
	
	If we consider the longest minimal foliation $\Gamma_t$ for $t \in [0,\varepsilon]$ generated by $\Gamma$, then $\Gamma_\varepsilon$ must be non-monotonic. In fact, the other side of $\Gamma_\varepsilon$ can not be minimal foliation, otherwise contradicts the longest minimal foliation assumption. The other side of $\Gamma_\varepsilon$ can not be expanding, otherwise $\Gamma$ would be a spindle minimal hypersurface. The other side of $\Gamma_\varepsilon$ can not be contracting, otherwise $\Gamma_\varepsilon$ is a minimal hypersurface contracting on a side with the same area as $\Gamma$, contradicts the construction in Lemma \ref{NoncontractingAccumulating}.
	
	The only possibility remains is that the other side of  $\Gamma_\varepsilon$ is accumulating. Then we show $\Gamma_\varepsilon$ is non-monotonic by contradiction. Suppose not, by the same reason as we discussed about $\Gamma$ above, the area function of the accumulating side of $\Gamma_\varepsilon$ is weakly increasing as approaching $\Gamma_\varepsilon$. Then we get a spindle again, which is a contradiction. In summary, either $\Gamma$ itself is non-monotonic, or we can find $\Gamma_\varepsilon$ that is non-monotonic.
	
	We will now show that $\cM^N$ is a perfect set. Let $\Gamma \in \cM^N$. Then there exists $\phi: \Gamma \times [0,\delta] \to M$ onto a closed halved tubular neighborhood $\varphi(\Gamma \times [0,\delta])$ where $\Gamma_0 = \Gamma$ and there exists sequences $0 < s_k < t_k$ with $t_k \to 0$ such that $U_k = \varphi(\Gamma \times [s_k,t_k])$ is a Song region. By the above, there exists $\Gamma_k \in \cM^N(U) \subseteq \cM^N$. Since $s_k,t_k \to 0$, we must have that $\Gamma_k$ converges to $\Gamma$ by the maximum principle.
	
	Finally, note that $\cM^N$ is a metric space (from the flat metric) and is totally disconnected. Therefore, by Brouwer's characterization \cite{Br}, $\cM^N$ is homeomorphic to a Cantor set.
\end{proof}

\begin{proof}[Proof of Theorem \ref{thm:main-full}]
	Let $(M^{n+1},g)$ be an arbitrary closed Riemannian manifold with $3 \leq n+1 \leq 7$. In the case, where $(M,g)$ is weakly Frankel, we can find connected minimal hypersurfaces with arbitrarily large area by Theorem \ref{thm:weaklyFlarge}.
	
	If $(M,g)$ is not weakly Frankel, the by Lemma \ref{lem:CutAlongContracting}, we can find Song regions $U$ of $(M,g)$. If one of these Song regions is a weak core, then we can find connected minimal hypersurfaces with arbitrarily large area by Theorem \ref{thm:WC->large_area}.
	
	Finally, assume that $(M,g)$ is not weakly Frankel, does not contain a weak core, and that the area of stable minimal hypersurfaces is uniformly bounded. If $(M,g)$ contains a spindle minimal hypersurface, then we can find connected minimal hypersurfaces with arbitrarily large area by Proposition \ref{prop:spindle}. Otherwise, by Proposition \ref{prop:nospindle}, we are in the second case of Theorem \ref{thm:main-full}. 
\end{proof}

	\section{One-sided minimal hypersurfaces}\label{one-sided case}
	
	In the previous sections, we assumed that $(M,g)$ contains no one-sided minimal hypersurfaces for simplicity. However, our results still hold without this assumption. The purpose of this section is to explain some of the technical modifications needed to handle when $(M,g)$ possibly contains one-sided minimal hypersurfaces.
	
	So suppose $\Gamma \subset (M,g)$ is a one-sided minimal hypersurface. We say that $\Gamma$ is \emph{contracting} (note there is only one side here) if the two-sided double cover is contracting. Equivalently, this says that if $N$ is the metric completion of $M \setminus \Gamma$, then $N$ has a boundary component $\Sigma$ which is contracting and is isometric to the double cover. Also, note if this double cover of $\Gamma$ is merely stable, our Jacobi field arguments used throughout still apply by lifting things to this double cover. 
	
	Finally, note that Zhou \cite{Zh} showed min-max theory does not produce one-sided minimal hypersurfaces for bumpy metrics. In particular, this implies for a general metric, if a one-sided component does appear from min-max, then it will have even multiplicity. Moreover, when the multiplicity is strictly larger than 2, then the double cover must be degenerate stable.
	
	\subsection{Weakly Frankel manifolds}
	Now we discuss the specific modifications needed in Section 3. The definition for weakly Frankel remains the same by using the notion of contracting above. Then all the results follow by considering the double cover whenever working with a one-sided minimal hypersurface. In particular, we can still define when disjoint minimal hypersurfaces $\Gamma_0, \Gamma_1$ (each possibly one-sided) are connected by a minimal foliation by cutting along them and using Lemma $\ref{lem:MinimizeAreaHomology}$. Again, there are three different types of one-sided minimal hypersurfaces $\Gamma \subset (M,g)$:
	
	\begin{enumerate}
		\item We say $\Gamma$ is \emph{isolated} it has no local minimal foliation.
		\item We say $\Gamma$ generates a \emph{partial minimal foliation} if it is connected to other hypersurfaces by minimal foliations where we can parameterize them all as $\Sigma_t = \varphi(\Sigma \times \{t\})$ for some $\varphi: \Sigma \times [0,1] \to M$ where $\varphi|_{\Sigma \times (0,1]}$ is a diffeomorphism onto its image where $\varphi|_{\Sigma \times \{0\}}$ is a double covering onto $\Gamma$.
		\item We say $\Sigma$ generates a \emph{(full) minimal foliation} if it is connected to other hypersurfaces by minimal foliations which union to $M$ and where we can parameterize them all as $\Sigma_t = \varphi(\Sigma \times \{t\})$ for some $\varphi: \Sigma \times [0,1] \to M$ where $\varphi|_{\Sigma \times (0,1)}$ is a diffeomorphism onto its image and where both $\varphi|_{\Sigma \times \{0\}}$ and $\varphi|_{\Sigma \times \{1\}}$ are double covering maps onto one-sided minimal hypersurfaces.
	\end{enumerate}
	We can still consider the cycles defined in Section \ref{SS:Space of stable minimal cycles} associated to the above:
	\begin{enumerate}
		\item We get the zero cycle in $\cZ_n(M;\Z_2)$ since $\Gamma$ occurs with even multiplicity.
		\item Here the space of cycles $\cK_\omega$ is similar as in Lemma \ref{lem:PartialFoliationSpace}, but it will be connected in this case and strongly deformation retracts to the zero cycle (which represents the double cover $\Sigma_0$ of $\Gamma$).
		\item Here the space of cycles $\cF_\omega$ is similar as in Lemma \ref{lem:FullFoliationSpace}. From the above, we get map $S^1 \to \cZ_n(M;\Z_2)$ parametrizing the foliation by considering $\Sigma_t$ as cycles and identifying the double covers $\Sigma_0,\Sigma_1$ with the zero cycle. Again, by taking products of this map and quotienting, we obtain a homeomorphism $TP^m(S^1) \to \cF_\omega$  where now $m$ is the largest integer with $2m \leq \omega/\area_{g}(\Gamma)$. Thus, $\cF_\omega \cong \RP^m$.
	\end{enumerate}
	In particular, we still can describe the topology of the space of all stable cycles $\cS_\omega \subset \cZ_n(M;\Z_2)$ to show that $H^m(\cS_\omega,\Z_2) = 0$ for $m \geq C' \omega$. Then the rest of the arguments follow directly.

	\subsection{Weak core manifolds}
	As before, the one-sided components which appear from Song's min-max occur with even multiplicity, and all our Jacobi field arguments used still apply by lifting things to double covers if necessary. Although, all the results we use from \cite{So} are stated for two-sided hypersurfaces, in \cite{So}, Song handles the one-sided cases (with appropriate modifications). So everything still follows in this case.

	\subsection{Accumulating case}
	Again, we can still define when a one-sided minimal hypersurface $\Gamma \subset (M,g)$ is accumulating or non-monotonic, by considering the two-sided double cover (or equivalently, by cutting $M$ along $\Gamma$ and considering the metric completion). Also, there is no issue in defining one-sided spindles.

	\section{Applications to special manifolds}
	\label{S:Applications to special manifolds}
	\subsection{Foliated by minimal hypersurfaces}
	A special case of a weakly Frankel manifold when the whole manifold is foliated by closed embedded minimal hypersurfaces. In general, even without the weakly Frankel property, the minimal hypersurfaces satisfy some nice properties.
	
	 Note here the foliation is given in the sense of Lemma \ref{lem:MinimizeAreaHomology}: if we cut a manifold $(M,g)$ along a minimal hypersurface $\Sigma$ to get a manifold with boundary $(N,g)$, then there is a diffeomorphism $\varphi:\Sigma\times[0,1]\to N$ such that $\varphi(\Sigma\times\{t\})$ is a minimal hypersurface for all $t\in[0,1]$. In particular, this allows one to construct a fiber bundle $\pi:M \to S^1$ where the fibers $\pi^{-1}(\{\theta\}) = \Sigma_\theta$ parameterize this foliation.
	
	\begin{lemma} \label{lem:minimalfoliations}
		Suppose $(M,g)$ is foliated by closed embedded minimal hypersurfaces. Then any minimal hypersurface $\Gamma$ must either be a leaf of some foliation or intersects every leaf of any minimal foliation. Moreover, when $\Gamma$ is not a leaf of some foliation, the fundamental group of $\Gamma$ is infinite. 
	\end{lemma}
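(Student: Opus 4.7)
The plan is to fix a minimal foliation $\pi:M\to S^1$ with connected leaves $\Sigma_\theta=\pi^{-1}(\theta)$ (which exists by the discussion preceding the lemma), to study the height function $f:=\pi|_\Gamma:\Gamma\to S^1$, and to deduce both parts of the lemma from a single maximum principle argument applied at a maximum of a lift of $f$.

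For the first assertion, I would argue by contradiction: suppose $\Gamma$ is disjoint from some leaf $\Sigma_{\theta_0}$. Then $f(\Gamma)\subseteq S^1\setminus\{\theta_0\}$, which is contractible, so $f$ lifts to a continuous $\tilde f:\Gamma\to\R$; equivalently, the embedding $\Gamma\hookrightarrow M$ lifts to an embedding $\tilde\iota:\Gamma\to\tilde M$ into the infinite cyclic cover $\tilde M\to M$ associated to $\pi$, with pulled-back fibration $\tilde\pi:\tilde M\to\R$. By compactness, $\tilde f=\tilde\pi\circ\tilde\iota$ attains a maximum at some $p\in\Gamma$. At $p$ the vanishing of $d\tilde f$ forces $T_p\Gamma\subseteq\ker d\tilde\pi=T_{\tilde\iota(p)}\tilde\Sigma_{\tilde f(p)}$, and by dimension these $n$-planes agree; furthermore $\tilde\iota(\Gamma)$ lies in the region $\{\tilde\pi\le\tilde f(p)\}$ near $\tilde\iota(p)$. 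I would then invoke the strong maximum principle for minimal hypersurfaces to conclude that $\tilde\iota(\Gamma)$ coincides with $\tilde\Sigma_{\tilde f(p)}$ in a neighborhood of $\tilde\iota(p)$. A standard open-and-closed argument in $\Gamma$ (openness from repeated applications of the maximum principle, closedness from continuity of the tangent plane and the dimension comparison) together with the connectedness of $\Gamma$ and of the fiber $\tilde\Sigma_{\tilde f(p)}$ then upgrades this local identity to $\tilde\iota(\Gamma)=\tilde\Sigma_{\tilde f(p)}$, which projects down to $\Gamma=\Sigma_{f(p)}$, contradicting the assumption that $\Gamma$ is not a leaf.

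For the moreover statement, assume $\Gamma$ is not a leaf of any minimal foliation. By the first part, $f$ is surjective and $\Gamma$ meets every leaf. Consider the induced homomorphism $f_\ast:\pi_1(\Gamma)\to\pi_1(S^1)\cong\Z$. If $f_\ast=0$, then $f$ admits a lift $\tilde f:\Gamma\to\R$, and the very same maximum-principle argument above forces $\Gamma$ to equal some leaf $\Sigma_{f(p)}$, a contradiction. Hence $f_\ast\ne 0$, so its image is a nontrivial subgroup of $\Z$, which is isomorphic to $\Z$ and in particular infinite; therefore $\pi_1(\Gamma)$ is infinite.

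The main obstacle I anticipate is the rigorous application of the strong maximum principle when $\Gamma$ and the leaf $\tilde\Sigma_{\tilde f(p)}$ are only a priori tangent (not yet graphical) at $p$. I plan to handle this by working in Fermi coordinates along $\tilde\Sigma_{\tilde f(p)}$: since the tangent planes agree at $p$ and $\tilde\iota(\Gamma)$ is a smooth embedded hypersurface lying on one side, it is locally a graph of some function $u$ satisfying the minimal surface equation with $u(p)=0$ and $Du(p)=0$ and $u\le 0$, at which point the classical Hopf/Jenkins-Serrin interior maximum principle for the minimal surface operator yields $u\equiv 0$ in a neighborhood, as required.
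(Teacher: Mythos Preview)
Your proof is correct and, for the first assertion, is essentially identical to the paper's: both lift $\Gamma$ (or its image) into the infinite cyclic cover $\Sigma\times\R$ associated to $\pi$ and apply the strong maximum principle at an extremum of the height function to force $\Gamma$ to coincide with a leaf.

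For the ``moreover'' statement the two arguments diverge slightly. The paper lifts the \emph{universal cover} $\tilde\Gamma\to\Gamma$ to a minimal immersion in $\Sigma\times\R$ and observes that if $\pi_1(\Gamma)$ were finite then $\tilde\Gamma$ would be compact, yielding the same maximum-principle contradiction. Your route instead analyzes the induced map $f_\ast:\pi_1(\Gamma)\to\pi_1(S^1)\cong\Z$: if $f_\ast=0$ then $\Gamma$ itself lifts to the cyclic cover and the first-part argument applies verbatim, so $f_\ast$ has nontrivial (hence infinite) image. Your approach is a bit more economical---it reuses the first part unchanged rather than introducing a new lift---and it actually proves the stronger conclusion that $\pi_\ast i_\ast(\pi_1(\Gamma))\subseteq\pi_1(S^1)$ is infinite, which the paper records separately as a Remark immediately after the lemma. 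The paper's route, on the other hand, makes transparent why compactness of the lifted object is the only obstruction.
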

	\begin{proof}
		Fix a foliation of $(M,g)$ by viewing $M$ as a fiber bundle $\pi:M \to S^1$ where the fibers $\pi^{-1}(\{\theta\}) = \Sigma_\theta$ are connected minimal hypersurfaces parameterizing the foliation. By unraveling the base circle, there exists a Riemannian cover $p:\Sigma\times\R \to M$ where each slice $\Sigma\times\{t\}$ is closed minimal hypersurface which projects isometrically to some $\Sigma_\theta$ in our foliation.
		
		Suppose $\Gamma$ is a minimal hypersurface in $(M,g)$ which does not intersect every leaf $\Sigma_\theta$ in the foliation. Then any loop contained in $\Gamma$ can be homotoped inside $M$ to a loop contained in some fixed slice $\Sigma_{\theta_{0}}$. By the lifting criterion, we can lift to obtain a minimal hypersurface $\hat{\Gamma} \subset \Sigma\times\R$ which projects isometrically to $\Gamma$. Since $\hat{\Gamma}$ is compact, it must touch some minimal leaf $\Sigma \times \{t\} \subset \Sigma\times\R$ from one side. Therefore, by the maximum principle, $\hat{\Gamma}$ must equal some slice $\Sigma \times \{t\}$, and hence, $\Gamma$ must equal some leaf $\Sigma_\theta$.
		
		Finally, suppose $\Gamma$ is a minimal hypersurface which is not equal to some leaf of the foliation. We can lift the universal cover $\tilde{\Gamma}$ of $\Gamma$ to get a minimal immersion $\tilde{\Gamma} \to \Sigma \times \R$. But if $\pi_1(\Gamma)$ were finite, then $\tilde{\Gamma}$ is also compact which would again give contradiction by the maximum principle.
	\end{proof}

	\begin{remark}
		The second part of the lemma can be made stronger. For instance, if $\Gamma$ is a minimal hypersurface which is not equal to some leaf of the foliation, one can show $\pi_* i_*(\pi_1(\Gamma)) \subseteq \pi_1(S^1)$ must be infinite where $i: \Gamma \to M$ is the inclusion.
	\end{remark}
	
	\subsection{Analytic manifolds}
	The {\L}ojasiewicz-Simon inequality is a powerful tool to study the local behavior of a critical point of an analytic elliptic integrand functional, see \cite{Si}. In the special case that the analytic functional is chosen to be the area functional in an analytic manifold, the result implies that:
	
	\begin{theorem}[{\L}ojasiewicz-Simon inequality]
		Suppose $(M,g)$ is an analytic manifold and $\Sigma$ is a minimal hypersurface. If a sequence of minimal hypersurfaces $\{\Sigma_i\}$ converges to $\Sigma$ smoothly, then when $i$ is sufficiently large, $\area(\Sigma_i)=\area(\Sigma)$.
	\end{theorem}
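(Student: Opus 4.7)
The plan is to parameterize the nearby minimal hypersurfaces as normal graphs over $\Sigma$ and then invoke Simon's infinite-dimensional version of the {\L}ojasiewicz-Simon inequality for the area functional. First, since $\Sigma_i$ converges to $\Sigma$ smoothly, for $i$ sufficiently large I can write $\Sigma_i = \{\exp_x(f_i(x)\nu(x)) : x \in \Sigma\}$ for some $f_i \in C^{k,\alpha}(\Sigma)$ with $\|f_i\|_{C^{k,\alpha}} \to 0$, where $\nu$ is a choice of unit normal and $k\geq 2$. Define the local area functional $\cA(f) = \area_g(\Sigma_f)$ on a neighborhood of $0$ in $C^{k,\alpha}(\Sigma)$. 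Since the ambient metric $g$ is analytic and the exponential map of an analytic metric is analytic, $\cA$ is an analytic map between Banach spaces with gradient $\nabla \cA(f)$ essentially given by the mean curvature of $\Sigma_f$ (times the Jacobian of the graph map).

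Next, I would verify the hypotheses of Simon's {\L}ojasiewicz-Simon inequality \cite{Si}: the critical point $f=0$ satisfies that the linearization of $\nabla\cA$ at $0$ is the Jacobi operator $L_\Sigma$, which is a self-adjoint elliptic operator with finite-dimensional kernel, hence a Fredholm operator of index $0$ on the appropriate H\"older spaces. Simon's theorem then yields constants $C>0$, $\sigma>0$, and $\theta\in(0,1/2]$ such that whenever $\|f\|_{C^{k,\alpha}}<\sigma$, one has
\[
|\cA(f) - \cA(0)|^{1-\theta} \;\leq\; C\,\|\nabla \cA(f)\|_{L^2(\Sigma)}.
\]

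Finally, for $i$ large enough, $\|f_i\|_{C^{k,\alpha}}<\sigma$ and $\Sigma_i$ is minimal, so its mean curvature vanishes identically. This forces $\nabla \cA(f_i) = 0$, and the {\L}ojasiewicz-Simon inequality gives $|\area_g(\Sigma_i) - \area_g(\Sigma)|^{1-\theta} \leq 0$, so $\area_g(\Sigma_i) = \area_g(\Sigma)$.

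The main obstacle is really just the verification that Simon's functional-analytic setup applies in this geometric setting, particularly the analyticity of the area functional on normal graphs (which depends essentially on the analyticity of $g$) and the Fredholm property of the linearization; both are standard but must be checked. Everything else reduces to bookkeeping about writing $\Sigma_i$ as a normal graph, which is justified by the smooth convergence and an application of the tubular neighborhood theorem around $\Sigma$.
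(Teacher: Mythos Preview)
Your proposal is correct and is essentially the classical route via Simon's infinite-dimensional {\L}ojasiewicz inequality \cite{Si}: write nearby hypersurfaces as normal graphs, check that the area functional is analytic on graphs (this is where analyticity of $g$ enters), verify that the Hessian is the Jacobi operator and hence Fredholm, and then read off equality of areas from $\nabla\cA(f_i)=0$. There is no gap.

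The paper, however, takes a different and more elementary path. It does not prove the full statement at all; it only needs the conclusion for \emph{stable} $\Sigma$, and for that case it bypasses the {\L}ojasiewicz machinery entirely. In Lemma~\ref{lem:NiceNeighorhood} and Corollary~\ref{cor:nonisolatedanalytic}, the paper uses that when $\Sigma$ is degenerate stable the kernel of $L_\Sigma$ is one-dimensional (spanned by a positive first eigenfunction $\phi_0$), applies the implicit function theorem to reduce the equation $H=0$ to a single analytic scalar function $c:(-\delta,\delta)\to\R$, and concludes that if $c$ has an accumulation of zeros then $c\equiv 0$. Thus the nearby minimal hypersurfaces form a smooth one-parameter family $\Gamma_t$ of minimal leaves, and the first variation formula forces $\area_g(\Gamma_t)$ to be constant. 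Your approach is more general (it handles unstable $\Sigma$, where $\ker L_\Sigma$ may have higher dimension and the reduction to a single real-analytic function is unavailable), at the cost of invoking Simon's Fredholm/analytic functional framework; the paper's approach is lighter but genuinely relies on the one-dimensionality of the kernel coming from stability.
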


	However, we only need to know this for stable minimal hypersurfaces, so we give an explicit proof this fact in Corollary \ref{cor:nonisolatedanalytic} using the implicit function theorem. In particular, we show if $\Sigma$ is a non-isolated stable minimal hypersurface, then $\Sigma$ must locally be a minimal foliation on both sides\footnote{In fact, compactness and the maximum principle tell us that this local foliation extends to a (full) minimal foliation of $(M,g)$.}, As a consequence, we have that:
	
	\begin{corollary} \label{cor:analyticnoaccumlating}
		In an analytic metric, there exist no minimal hypersurfaces which are accumulating on a side.
	\end{corollary}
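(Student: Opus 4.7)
The plan is to argue by contradiction. Suppose $\Gamma \subset (M,g)$ is a minimal hypersurface which is accumulating on a side, and fix the tubular neighborhood parametrization $\varphi: \Gamma \times [-\delta,\delta] \to M$ with accumulating side $\varphi(\Gamma \times [0,\delta])$. By the definition of \emph{accumulating}, there is a sequence $r_k \searrow 0$ such that each slice $\Gamma_{r_k} := \varphi(\Gamma \times \{r_k\})$ is minimal, while at the same time there exist arbitrarily small $s \in (0,\delta]$ where $\varphi(\Gamma \times \{s\})$ has strictly nonvanishing mean curvature.

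The first step is to promote $\Gamma$ itself to a degenerate stable minimal hypersurface. Through $\varphi$, each $\Gamma_{r_k}$ is realized as the normal graph of a strictly positive function $u_k$ over $\Gamma$ with $u_k \to 0$ uniformly; since each $u_k$ solves the minimal surface equation, standard Schauder theory upgrades this to smooth convergence. The renormalized sequence $u_k / \|u_k\|_{C^0}$ subconverges to a nontrivial, nonnegative Jacobi field $f$ on $\Gamma$, which by the strong maximum principle is strictly positive. Hence $f$ realizes the lowest eigenvalue of $L_\Gamma$ and that eigenvalue is zero, so $\Gamma$ is degenerate stable. (If $\Gamma$ is one-sided, the same argument is run on its two-sided double cover, as in Section \ref{one-sided case}.)

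Now $\Gamma$ is a non-isolated stable minimal hypersurface, so Corollary \ref{cor:nonisolatedanalytic} supplies a local minimal foliation through $\Gamma$ on both sides. In particular, there is $\eta \in (0,\delta]$ such that $\varphi(\Gamma \times \{t\})$ is minimal for \emph{every} $t \in [0,\eta]$, directly contradicting the existence of arbitrarily small $s \in (0,\eta]$ with $\varphi(\Gamma \times \{s\})$ having strictly nonvanishing mean curvature.

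The main obstacle in this plan is the first step: converting one-sided accumulation of minimal hypersurfaces into degenerate stability of the limit. This depends essentially on the one-sidedness of the approximation (which yields a sign-definite limiting Jacobi field so that the maximum principle can be applied) together with standard elliptic regularity. Once stability is in hand, the analyticity of $g$, repackaged as Corollary \ref{cor:nonisolatedanalytic}, immediately closes the argument.
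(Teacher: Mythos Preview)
Your argument is correct and follows the same route as the paper: an accumulating hypersurface is non-isolated and (degenerate) stable, so Corollary~\ref{cor:nonisolatedanalytic} forces the canonical foliation of Lemma~\ref{TubularNeighborhood} to be a minimal foliation, contradicting the accumulating classification. Two minor remarks: your first step is already recorded in the paper immediately after Lemma~\ref{TubularNeighborhood} (unstable $\Rightarrow$ expanding, strictly stable $\Rightarrow$ contracting, so accumulating $\Rightarrow$ degenerate stable), so the explicit Jacobi-field construction is not needed; and in the last step you should make explicit that the $\varphi$ you fixed \emph{is} the foliation of Lemma~\ref{TubularNeighborhood}/Lemma~\ref{lem:NiceNeighorhood} (possibly after shrinking $\delta$), since that is precisely the foliation to which Corollary~\ref{cor:nonisolatedanalytic} applies.
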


	Note the same holds for bumpy metrics simply because accumulating minimal hypersurfaces are degenerate. However, unlike bumpy metrics, analytic metrics can (and often will) have minimal foliations. Such foliations can complicate the space of minimal hypersurfaces, but as shown in Section \ref{WeaklyFrankelCase}, we can control such things.

	\begin{proof}[Proof of Theorem \ref{thm:analytic}]
		Let $(M^{n+1},g)$ with $3 \leq n+1 \leq 7$ have an analytic metric. Note that in Section 5, we showed if a manifold is not weakly Frankel, has no weak core, and has a uniform area bound for stable minimal hypersurfaces, then there must exist some accumulating minimal hypersurface (for example, see Lemma \ref{NoncontractingAccumulating}). But by Corollary \ref{cor:nonisolatedanalytic}, there exist no accumulating minimal hypersurfaces in analytic metrics. 
		
		Therefore, either $(M,g)$ must either have stable minimal hypersurfaces of arbitrarily large area, be weakly Frankel, or has a weak core. In the later cases, we can find arbitrarily large minimal hypersurfaces by Theorem \ref{thm:weaklyFlarge} and Theorem \ref{thm:WC->large_area}.
	\end{proof}
	
	\subsection{Manifolds with positive scalar curvature}
	
	In a $3$-manifold with positive scalar curvature, the area of a minimal surface is controlled by the index. This is a consequence of the following theorem proved by Chodosh-Ketover-Maximo:
	\begin{theorem}[\cite{CKM} Theorem 1.3] 
		Suppose $(M^3,g)$ is a closed $3$-manifold with positive scalar curvature. For any $I \in \N$, there exist $A_0=A_0(M,g,I) > 0$ such that if $\Sigma$ is a closed embedded minimal surface in $(M,g)$, then
		\[
		\area(\Sigma) \leq A_0 \quad \text{whenever} \quad \ind_g(\Sigma) \leq I.
		\]
	\end{theorem}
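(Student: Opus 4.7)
The plan is to argue by contradiction using a compactness and bubbling analysis. Suppose there is a sequence $\{\Sigma_k\}$ of closed embedded minimal surfaces in $(M,g)$ with $\mathrm{ind}_g(\Sigma_k) \le I$ but $\mathrm{area}(\Sigma_k) \to \infty$; the goal is to extract a limiting geometric object that contradicts the positive scalar curvature hypothesis $R_M \ge R_0 > 0$.

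The first step is a curvature-concentration statement: the index bound forces the second fundamental form $|A_{\Sigma_k}|$ to remain uniformly bounded on every compact subset of $M \setminus \{p_1, \ldots, p_N\}$ for some finite collection of points with $N \le N(I)$. The mechanism is that each concentration point contributes at least one negative direction of the Jacobi quadratic form $Q_{\Sigma_k}(f) = \int |\nabla f|^2 - (|A|^2 + \mathrm{Ric}_M(\nu,\nu))f^2$. Indeed, rescaling $\Sigma_k$ by its curvature scale at a blowup point produces a nontrivial complete minimal surface in $\mathbb{R}^3$, and a cutoff of an unstable test function on the blowup transplanted back to $\Sigma_k$ yields a compactly supported function making $Q_{\Sigma_k}$ negative. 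Keeping $I+1$ such bubbles well-separated would give $I+1$ linearly independent such functions, contradicting the index bound.

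Away from the $p_i$, Schoen–Simon interior estimates give subsequential smooth convergence (with integer multiplicity) to a stable minimal lamination $\mathcal{L}$ of $M \setminus \{p_1, \ldots, p_N\}$. The positive scalar curvature now enters through a Fischer–Colbrie–Schoen–style argument: for a closed two-sided stable minimal surface in a 3-manifold, testing stability with $\phi = 1$ and using the identity $|A|^2 + \mathrm{Ric}_M(\nu,\nu) = \tfrac{1}{2}|A|^2 + \tfrac{1}{2}R_M - K_\Sigma$ together with Gauss–Bonnet yields
\[
\tfrac{R_0}{2}\,\mathrm{area}(\Sigma) + \tfrac{1}{2}\int_\Sigma |A|^2 \;\le\; 2\pi \chi(\Sigma),
\]
forcing $\Sigma$ to be a sphere or projective plane of area at most $8\pi/R_0$. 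Adapting this leaf-by-leaf to $\mathcal{L}$, using the maximum principle to rule out accumulation, shows every leaf of $\mathcal{L}$ is compact with bounded area and that $\mathcal{L}$ consists of finitely many leaves. Combined with a removable-singularities analysis near each $p_i$, the area of $\Sigma_k$ inherited from the laminar part is uniformly bounded, contradicting $\mathrm{area}(\Sigma_k) \to \infty$.

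The main obstacle is the blowup analysis at each concentration point: one must show that only a bounded amount of area (depending on $I$) can escape into each bubble. Concretely, after rescaling one obtains a complete minimal surface in $\mathbb{R}^3$ with index at most $I$, and one needs to convert this finite-index hypothesis into a finite total curvature estimate (so that the bubble accounts for at most a bounded area contribution per concentration point). This is the hardest part of the argument: it is independent of the PSC setting, and relies on the classical fact that complete finite-index minimal surfaces in $\mathbb{R}^3$ have finite total curvature, together with a careful accounting of how much area of $\Sigma_k$ is absorbed into the bubble versus the lamination.
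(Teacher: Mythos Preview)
The paper does not prove this theorem at all: it is quoted verbatim from \cite{CKM} and used as a black box in the proofs of Corollaries~\ref{cor:analytic+PSC} and~\ref{cor:foliation+PSC}. So there is nothing in the paper to compare your argument against.

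That said, your outline is broadly the strategy of \cite{CKM}: a contradiction via index-controlled bubbling, a stable lamination limit away from finitely many concentration points, and the Schoen--Yau/Fischer-Colbrie--Schoen inequality to force each stable leaf to be a sphere or $\RP^2$ of area at most $8\pi/R_0$. Your identification of the bubble-area accounting (via Fischer-Colbrie's finite-index $\Rightarrow$ finite total curvature in $\R^3$) as the crux is also on target.

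There is, however, a genuine gap in your sketch. You assert that once the lamination $\mathcal{L}$ has finitely many leaves, each of bounded area, ``the area of $\Sigma_k$ inherited from the laminar part is uniformly bounded.'' This does not follow: the smooth convergence away from the $p_i$ is with some integer multiplicity $m_k$, and nothing you have said rules out $m_k \to \infty$. If the $\Sigma_k$ wrap around a fixed stable sphere with multiplicity tending to infinity, each leaf still has area at most $8\pi/R_0$, the blowup limits at the neck points are still finite-index surfaces in $\R^3$, and yet $\area(\Sigma_k) \to \infty$. Ruling this out is exactly where the real work in \cite{CKM} lies: one needs a quantitative ``no area loss'' statement asserting that the total area of $\Sigma_k$ is captured, up to a uniform error, by the sum of the lamination area (with multiplicity) and the bubble areas, \emph{together with} a bound on the multiplicity coming from the fact that each extra sheet forces an additional neck and hence additional index. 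Your sketch handles the bubble side but not the sheeting side; both are needed to close the contradiction.
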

	
	Now we are willing to show Corollary \ref{cor:analytic+PSC} and Corollary \ref{cor:foliation+PSC}. 
	
	\begin{proof}[Proof of Corollary \ref{cor:analytic+PSC}]
		Suppose $(M^3,g)$ is analytic with positive scalar curvature. Since the metric is analytic, we can find minimal surfaces of arbitrarily large area by Theorem \ref{thm:analytic}. Therefore, the result follows from \cite[Theorem 1.3]{CKM}.
	\end{proof}
	
	\begin{proof}[Proof of Corollary \ref{cor:foliation+PSC}]
		Suppose $(M^3,g)$ has positive scalar curvature and admits a foliation by closed embedded minimal surfaces. We will show that $(M,g)$ must be weakly Frankel. Recall that Lemma \ref{lem:minimalfoliations} shows that any minimal surface is either a leaf in some minimal foliation, or intersects each leaf of every minimal foliation. Because the manifold $(M,g)$ has positive scalar curvature, a classical result of Schoen-Yau \cite{SY} shows that any stable minimal surfaces must be topologically either a sphere or real projective plane (in particular, has finite fundamental group). Thus, the second part of Lemma \ref{lem:minimalfoliations} shows that any stable minimal surface in $(M,g)$ must be a leaf of a foliation. Therefore, $(M^3,g)$ must be weakly Frankel, and hence has minimal hypersurfaces of arbitrarily large area by Theorem \ref{thm:weaklyFlarge}.
	\end{proof}

	\appendix
	
	\section{Nice neighborhood lemma} \label{sec:NiceNeighborhood}
	
	\begin{lemma} \label{lem:NiceNeighorhood}
		Let $\Gamma$ be a closed two-sided minimal hypersurface in $(M,g)$. There exists a foliation $\{\Gamma_t\}_{t \in [-\delta,\delta]}$ of some tubular neighborhood $N$ such that $\Gamma_0 = \Gamma$ and where for each fixed $t \in [-\delta,\delta]$, the mean curvature of $\Gamma_t$ is either entirely zero, positive, or negative. Moreover, the foliation is parameterized by a diffeomorphism
		\[
		\Gamma_t = \varphi(\Gamma \times \{t\}) \quad \text{where} \quad \varphi: \Gamma \times [-\delta,\delta] \to N
		\]
		such that $\varphi(x,0) = x$ for all $x \in \Gamma$.
	\end{lemma}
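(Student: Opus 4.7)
My plan is to prove this via the implicit function theorem applied to the mean curvature functional on normal graphs, using the positive first eigenfunction of the Jacobi operator to parameterize the transverse direction.

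First, I would set up the nonlinear functional analysis. Fix a smooth unit normal $\nu$ along the two-sided minimal hypersurface $\Gamma$, and for $u \in C^{2,\alpha}(\Gamma)$ with small norm, let $\Gamma_u = \{\exp_x(u(x)\nu(x)) : x \in \Gamma\}$ be the associated normal graph. The mean curvature defines a smooth map $\mathcal{H}(u) := H(\Gamma_u)$ with $\mathcal{H}(0)=0$ and linearization $D\mathcal{H}(0) = L_\Gamma$. The essential observation is that the first eigenfunction $\phi_1$ of $L_\Gamma$ is strictly positive (and its eigenvalue $\lambda_1$ is simple by connectedness of $\Gamma$), so any hypersurface whose mean curvature is a scalar multiple of $\phi_1$ automatically has mean curvature of constant sign.

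The main step is a Lyapunov-Schmidt-type reduction. Decompose $C^{k,\alpha}(\Gamma) = \mathbb{R}\phi_1 \oplus V^{k,\alpha}$ where $V^{k,\alpha}$ is the $L^2$-orthogonal complement of $\phi_1$, and introduce
\[
F: V^{2,\alpha} \times (-\varepsilon,\varepsilon) \to V^{0,\alpha}, \qquad F(v,t) = \Pi_\perp \mathcal{H}(t\phi_1 + v),
\]
where $\Pi_\perp$ denotes $L^2$-projection onto $V^{0,\alpha}$. Since $F(0,0)=0$ and $D_v F(0,0) = \Pi_\perp L_\Gamma|_{V^{2,\alpha}}$, self-adjointness of $L_\Gamma$ shows the kernel of this linearization is $\ker(L_\Gamma)\cap V^{2,\alpha}$. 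When this intersection is trivial (i.e.\ $\Gamma$ has no nontrivial Jacobi fields orthogonal to $\phi_1$), the implicit function theorem produces a smooth curve $v(t) \in V^{2,\alpha}$ with $v(0)=0$ and
\[
\mathcal{H}(t\phi_1 + v(t)) = c(t)\,\phi_1
\]
for some smooth $c: (-\delta,\delta) \to \mathbb{R}$. Then each hypersurface $\Gamma_t := \Gamma_{t\phi_1 + v(t)}$ has mean curvature of definite sign determined by the sign of $c(t)$, since $\phi_1 > 0$. To see that we obtain a foliation, note that $u(t)(x) := t\phi_1(x) + v(t)(x)$ satisfies $\partial_t u(0) = \phi_1 + \dot{v}(0)$; since $\phi_1$ is bounded below by a positive constant and $\dot{v}(0)$ is fixed, for sufficiently small $|t|$ the function $\partial_t u(t)$ stays uniformly positive, so $\varphi(x,t) := \exp_x(u(t)(x)\nu(x))$ is a diffeomorphism from $\Gamma \times [-\delta,\delta]$ onto a tubular neighborhood.

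The principal obstacle is the degenerate case in which $L_\Gamma$ possesses Jacobi fields perpendicular to $\phi_1$, so $D_v F(0,0)$ has nontrivial kernel and the linearized IFT fails. To handle this, I would split $V^{2,\alpha}$ further into the finite-dimensional space $\cK := \ker(L_\Gamma)\cap V^{2,\alpha}$ and its $L^2$-complement $V'$: the implicit function theorem still applies on $V'$ to solve the transverse equation, reducing the problem to a finite-dimensional bifurcation equation on $\cK \times \mathbb{R}$. The bifurcation equation has the leading coefficient $\Pi_\perp L_\Gamma|_{\cK}=0$, so one analyzes it via higher-order expansion of $\mathcal{H}$; any solution curve contributes a leaf whose mean curvature is a multiple of $\phi_1$, hence of constant sign. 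Combining this with the positivity of $\phi_1$ to preserve the foliation property gives the conclusion in full generality; alternatively, one may perturb $g$ to a non-degenerate metric where the straightforward IFT applies and pass to a smooth limit using Schoen-Simon compactness.
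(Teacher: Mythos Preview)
Your setup and the non-degenerate case are essentially the same as the paper's: one parameterizes nearby hypersurfaces as normal graphs, uses the positive first eigenfunction $\phi_1$ of $L_\Gamma$ as the transverse direction, and arranges via the implicit function theorem that the mean curvature of each leaf is a scalar multiple of $\phi_1$, hence of constant sign.

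The divergence is in how you organize the case analysis, and this creates an artificial obstacle. The paper splits according to whether the first eigenvalue $\lambda_1$ vanishes. When $\lambda_1 \neq 0$, no implicit function theorem is needed at all: the direct Taylor expansion $H(t\phi_1) = t\lambda_1\phi_1 + O(t^2)$ already shows that the simple foliation $\varphi(x,t)=\exp_x(t\phi_1(x)\nu(x))$ has mean curvature of definite sign for small $t\neq 0$. When $\lambda_1 = 0$, simplicity of the first eigenvalue forces $\ker(L_\Gamma)=\operatorname{Span}(\phi_1)$, so $\ker(L_\Gamma)\cap V$ is automatically trivial and your IFT applies cleanly.

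Your ``principal obstacle'' --- Jacobi fields orthogonal to $\phi_1$ --- can only occur when $\lambda_1<0$ and $0$ is a higher eigenvalue, i.e.\ in the unstable degenerate case. But that case falls under $\lambda_1\neq 0$, where the elementary Taylor argument already suffices. The workarounds you propose (a further Lyapunov--Schmidt reduction, or perturbing the metric and passing to a limit via Schoen--Simon compactness) are both unnecessary and, as written, incomplete: you do not establish that the bifurcation equation has a solution curve parameterized by $t$ giving a genuine foliation, and the limiting argument does not obviously preserve the constant-sign mean curvature condition on each leaf. Reorganizing the case split as above removes the difficulty entirely.
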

		\begin{proof}
		For $\phi \in C^\infty(\Gamma)$, consider the smooth hypersurfaces given by
		\[
		\Gamma_\phi = \{\exp_x(\phi(x)\nu(x)) : x \in \Gamma\}.
		\]
		Pick a neighborhood $U$ around $0 \in C^\infty(\Gamma)$ such that $\Gamma_\phi$ is embedded for every $\phi \in U$. Consider the smooth map $H:U \to C^\infty(\Gamma)$ where $H(\phi)$ is the mean curvature of $\Gamma_\phi$ (pulled back to be a function on $\Gamma$). The differential at $0 \in C^\infty(\Gamma)$
		\[
		DH_0: C^\infty(\Gamma) \to C^\infty(\Gamma)
		\]
		is given by the Jacobi operator of $\Gamma$, that is, $DH_0 = L_\Gamma$ where
		\[
		L_\Gamma = -\Delta-|A|^2-\Ric_M(\nu,\nu).
		\]
		Let $\lambda$ be the least eigenvalue for $L_\Gamma$. Recall the corresponding eigenspace for $\lambda$ is spanned by a single eigenfunction $\phi_0$ which we can assume to be positive. Note
		\[
		H(t\phi_0) = H(0) + DH_0(t\phi_0) + O(t^2) = tL_\Gamma(\phi_0) + O(t^2) = t \lambda \phi_0 + O(t^2)
		\]
		by Taylor expansion. Therefore, if $\lambda \neq 0$, then for small enough $\delta > 0$,
		\[
		\varphi: \Gamma \times [-\delta,\delta] \to M \quad \text{given by} \quad \varphi(x,t) = \exp_x(t\phi_0(x)\nu(x))
		\]
		is the desired local foliation. Moreover, we indeed see that when $\Gamma$ is unstable (equivalent to $\lambda < 0$), this foliation is expanding on both sides. Likewise, when $\Gamma$ is strictly stable (equivalent to $\lambda > 0$), this foliation is contracting on both sides.
		
		So now, assume that $\Gamma$ is degenerate stable, that is, $\lambda = 0$. Note that we have
		\[
		K := \ker(L_\Gamma) = \Span(\phi_0).
		\]
		Let $\pi: C^\infty(\Gamma) \to K^\perp$ be the projection onto the $L^2$ orthogonal complement $K^\perp \subset C^\infty(\Gamma)$ of the kernel, and then consider the map $H^\perp: C^\infty(\Gamma) \to K^\perp$ given by $H^\perp = \pi \circ H$. Decompose the domain as $C^\infty(\Gamma) = K \oplus K^\perp$, and note for $\phi \in K^\perp$,
		\[
		DH^\perp_{(0,0)}(0,\phi) = D \pi_{(0,0)} (DH_{(0,0)}(0,\phi)) = \pi(L_\Gamma(\phi)) = L_\Gamma(\phi)
		\]
		is invertible as map $K^\perp \to K^\perp$ by the Fredholm alternative, and the inverse is bounded (because the spectrum of $L_\Gamma$ is discrete). Therefore, by the implicit function theorem, there exists an $\varepsilon > 0$ and a neighborhood $W \subset U$ around $0$ along with a map $\Phi:(-\varepsilon,\varepsilon) \to K^\perp$ with $\Phi'(0) = 0$ such that for all $\phi \in W$,
		\[
		H^\perp(\phi) = 0 \quad \text{if and only if} \quad \phi = t\phi_0+\Phi(t)
		\]
		for some $t \in (-\varepsilon,\varepsilon)$. In particular, there exists $c:(-\varepsilon,\varepsilon)\to\R$ such that
		\[
		H(t\phi_0+\Phi(t)) = c(t)\phi_0.
		\]
		Therefore, for $\delta > 0$ sufficiently small, the map
		\[
		\varphi: \Gamma \times [-\delta,\delta] \to M \quad \text{given by} \quad \varphi(x,t) = \exp_x((t\phi_0(x)+\Phi(t)(x))\nu(x))
		\]
		gives the desired local foliation.
		\end{proof}
	
		\begin{corollary} \label{cor:nonisolatedanalytic}
			If $\Gamma$ is a two-sided non-isolated stable minimal hypersurface in an analytic metric $(M,g)$, then the local foliation given by Lemma \ref{lem:NiceNeighorhood}
			\[
			\Gamma_t = \varphi(\Gamma \times \{t\}) \quad \text{where} \quad \varphi: \Gamma \times [-\delta,\delta] \to N
			\]
			is a minimal foliation, that is, $\Gamma_t$ is minimal for all $t \in [-\delta,\delta]$.
		\end{corollary}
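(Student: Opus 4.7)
The plan is to work in the degenerate stable branch of the construction in Lemma \ref{lem:NiceNeighorhood} and use the real-analyticity of the data to upgrade isolated vanishing of the mean curvature to identical vanishing. First, I observe that a non-isolated stable $\Gamma$ must in fact be degenerate stable: if the first eigenvalue $\lambda$ of $L_\Gamma$ were positive, then $L_\Gamma$ would be invertible on $C^{2,\alpha}(\Gamma)$ and the standard implicit function theorem applied to the mean curvature operator would force $\Gamma$ to be the unique minimal hypersurface in a $C^{2,\alpha}$-neighborhood, contradicting non-isolation. Thus $\lambda=0$, so we are in the last part of the proof of Lemma \ref{lem:NiceNeighorhood}: there is a positive first eigenfunction $\phi_0\in\ker(L_\Gamma)$, an implicit function $\Phi:(-\varepsilon,\varepsilon)\to K^{\perp}$, and a scalar function $c:(-\varepsilon,\varepsilon)\to\R$ with
\[
H(t\phi_0+\Phi(t))=c(t)\phi_0,
\]
and $\Gamma_t$ realized as the normal graph over $\Gamma$ of $t\phi_0+\Phi(t)$.

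The second step is to establish that $c$ is real-analytic in $t$. Since $g$ is real-analytic, the exponential map of $g$ is real-analytic, and so the mean curvature $H(\phi)$ of a normal graph over $\Gamma$ is given pointwise by a real-analytic function of $(x,\phi,D\phi,D^{2}\phi)$. Consequently, viewed as a nonlinear operator $H:C^{2,\alpha}(\Gamma)\to C^{0,\alpha}(\Gamma)$, the map $H$ is a real-analytic Banach space map near $0$. The analytic implicit function theorem in Banach spaces then provides that the map $\Phi$ produced in the proof of Lemma \ref{lem:NiceNeighorhood} is real-analytic in $t$, which in turn forces
\[
c(t)=\frac{\langle H(t\phi_0+\Phi(t)),\phi_0\rangle_{L^{2}(\Gamma)}}{\|\phi_0\|_{L^{2}(\Gamma)}^{2}}
\]
to be real-analytic on $(-\varepsilon,\varepsilon)$. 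The main technical point is the verification that $H$ is an analytic operator between Banach spaces; this follows from the standard criterion that a nonlinear operator built from analytic pointwise functions of a bounded number of derivatives is analytic on the corresponding Hölder spaces.

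Finally, I invoke the non-isolation of $\Gamma$. There is a sequence of minimal hypersurfaces $\Sigma_n\neq\Gamma$ converging smoothly to $\Gamma$; for $n$ large, each $\Sigma_n$ is a normal graph $\Gamma_{\psi_n}$ with $\psi_n\to 0$ in $C^{2,\alpha}(\Gamma)$, and minimality gives $H(\psi_n)=0$, hence $H^{\perp}(\psi_n)=0$. The uniqueness clause of the implicit function theorem inside Lemma \ref{lem:NiceNeighorhood} produces $t_n\in(-\varepsilon,\varepsilon)$ with $\psi_n=t_n\phi_0+\Phi(t_n)$; since $\Sigma_n\neq\Gamma$ we have $t_n\neq 0$, and since $t\mapsto t\phi_0+\Phi(t)$ is a local homeomorphism at $0$ with derivative $\phi_0$, we also have $t_n\to 0$. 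Substituting yields $c(t_n)\phi_0=H(\psi_n)=0$, so $c(t_n)=0$ for all large $n$. A real-analytic function on a connected open interval that vanishes on a sequence converging to an interior point is identically zero, so $c\equiv 0$ on $(-\varepsilon,\varepsilon)$. Shrinking $\delta$ if needed, every $\Gamma_t$ has zero mean curvature, so the local foliation is a minimal foliation.
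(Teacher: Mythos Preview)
Your proof is correct and follows essentially the same route as the paper's own argument: reduce to the degenerate stable case, use analyticity of the metric to make the mean curvature operator (and hence $\Phi$ and $c$) real-analytic, and then conclude $c\equiv 0$ from the accumulation of zeros coming from non-isolation. You have simply filled in more details than the paper does, in particular the justification that $\lambda=0$ and the explicit extraction of the sequence $t_n$ via the uniqueness clause of the implicit function theorem.
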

		\begin{proof}
			Since $\Gamma$ is assumed to be non-isolated, $\Gamma$ is degenerate stable, that is, we are in the case where $\lambda = 0$ from the previous proof. In that notation, for all $\phi \in W$,
			\[
			H(\phi) = 0 \quad \text{if and only if} \quad \phi = t\phi_0+\Phi(t) \quad \text{and} \quad c(t) = 0
			\]
			for some $t \in (-\delta,\delta)$. Since the metric is analytic, the map $H:W \to C^\infty(\Gamma)$ is analytic, and thus, the map $c:(-\delta,\delta) \to \R$ is analytic as well. Since $\Gamma$ is non-isolated, the function $c(t)$ has an accumulation of zeros. Therefore, $c = 0$ identically by analyticity, and so, $\Gamma_t$ is minimal for all $t \in (-\delta,\delta)$.
		\end{proof}
	
	\begin{lemma}\label{lem:accumulating-nearby}
		Suppose $\Gamma$ is a minimal hypersurface in $(M,g)$ and the halved tubular neighborhood $\varphi(\Gamma\times[0,\delta])$ is accumulating, then for any $\delta'\in(0,\delta)$, there exists $s\in(0,\delta')$, such that $\Gamma_s:=\varphi(\Gamma\times\{s\})$ is contracting at a side.
	\end{lemma}

\begin{proof}
	Write $f(t)=\area(\varphi(\Sigma\times\{t\}))$. Corollary \ref{cor:nonisolatedanalytic} implies that we only need to show for any $\delta'\in(0,\delta)$, there exists $s\in(0,\delta')$, such that $f'(s)=0$, and there exists $\epsilon>0$ such that either $f'(t)<0$ for $t\in(s-\epsilon,s)$, or $f'(t)>0$ for $t\in(s,s+\epsilon)$.
	
	Because $\Gamma$ has halved neighborhood that is accumulating, for any $\delta'>0$, we can always find $0<s_1<t_0<s_2<\delta'$, such that $f'(s_1)=f'(s_2)=0$, and $f'(t_0)\neq 0$. If $f'(t_0)<0$, we choose $s=\sup_{r}\{r>t_0:f'(t)<0, t\in[t_0,r)\}$; if $f'(t_0)>0$, we choose $s=\sup_{r}\{r<t_0:f'(t)>0, t\in(r,t_0]\}$. In either cases, we have $f'(s)=0$, and either $f'(t)<0$ for $t\in(t_0,s)$, or $f'(t)>0$ for $t\in(s,t_0)$. This concludes the proof.
\end{proof}
	
	\section{Pathological example} \label{sec:example}
	
	We give a class of examples of smooth Riemannian manifold where the second case happens from Theorem \ref{thm:main} and Theorem \ref{thm:main-full}.
	
	\begin{proposition} \label{Athm:example}
		For $n\geq 2$, let $(\Sigma^n,g_0)$ be any closed Riemannian manifold. There exists a smooth metric $(\Sigma \times S^1,g)$ such that the space $\cM^N$ of non-monotonic minimal hypersurfaces is homeomorphic to the Cantor set $C$.
	\end{proposition}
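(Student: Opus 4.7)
I would prove Proposition \ref{Athm:example} by realizing the pathology on $\Sigma \times S^1$ through a warped product metric $g = \phi(\theta)^2 h_0 + d\theta^2$, where $\phi : S^1 \to (0,\infty)$ is a smooth positive function to be constructed and $h_0$ is a convenient auxiliary metric on $\Sigma$; since the statement only asks for the existence of \emph{some} $g$ on the product $\Sigma \times S^1$, we are free to pick $h_0$. The goal is to arrange that $(\Sigma \times S^1, g)$ satisfies the four hypotheses of Proposition \ref{prop:nospindle}, namely not weakly Frankel, no weak core, uniformly bounded stable area, and no spindle accumulating on a side, which will immediately give $\cM^N$ homeomorphic to a Cantor set. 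The relevant computation is that the slice $\Sigma \times \{\theta_0\}$ is minimal exactly when $\phi'(\theta_0) = 0$, its area equals $\phi(\theta_0)^n \area_{h_0}(\Sigma)$, and its one-sided neighborhood is contracting, expanding, minimally foliated, or accumulating according to whether $\phi$ has a strict local min, a strict local max, is locally constant, or is accumulated by other critical points near $\theta_0$.

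To construct $\phi$, fix an embedded Cantor set $K \subset S^1$ with complementary gaps $\{(a_k,b_k)\}_{k\in\N}$, a base value $m > 0$, and rapidly decaying amplitudes $\varepsilon_k \downarrow 0$. Set $\phi \equiv m$ on $K$, and on each gap $(a_k,b_k)$ let $\phi$ be a smooth profile that is flat to infinite order at the endpoints $a_k,b_k$, attains exactly one strict local maximum of height $m+\varepsilon_k$ (a \emph{peak}) and one strict local minimum of height $m-\varepsilon_k$ (a \emph{valley}), and is strictly monotone on each of the three subintervals between consecutive critical points. Controlling the derivatives at each scale by fast-decaying bounds makes $\phi \in C^\infty(S^1)$. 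Every point of $K$ then becomes a degenerate critical point of $\phi$ at which both peak slices and valley slices from the neighboring gaps accumulate on each side.

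With $\phi$ in hand I would verify the four hypotheses of Proposition \ref{prop:nospindle} as follows. Valley slices are contracting on both sides, so $g$ is not weakly Frankel. Stable area is uniformly bounded because $\phi$ and the curvatures of $g$ are uniformly controlled, via \cite{SS}. No Song region is a weak core because the compact interior of any such region necessarily meets the perfect nowhere-dense set $K$ in an infinite subset, and therefore contains infinitely many valley slices, which are interior contracting hypersurfaces. No spindle accumulates on a side since $\phi$ is nowhere locally constant and oscillates on every one-sided neighborhood of each $c \in K$. The main technical obstacle is that the weak-core, area, and spindle conditions must be verified against \emph{all} stable minimal hypersurfaces, not just the slice family; in particular, one must control cylindrical minimal hypersurfaces $\Sigma' \times S^1$ arising from closed minimal hypersurfaces $\Sigma' \subset (\Sigma, h_0)$, as well as possibly exotic non-slice, non-cylindrical candidates. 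The plan is to pick $(\Sigma, h_0)$ and the amplitudes $\varepsilon_k$ so that $g$ is a $C^\infty$-small perturbation of the product $m^2 h_0 + d\theta^2$; a maximum principle and integration-by-parts argument in the warped product then forces any closed stable minimal hypersurface to be either a slice with $\phi' = 0$ or a cylindrical $\Sigma' \times S^1$ over a closed stable minimal hypersurface of $(\Sigma,h_0)$, and $h_0$ is then arranged so that the finite set of stable cylindrical pieces does not interact with the valley slices to create a weak core or a spindle. Executing this classification and compatibility check rigorously is the delicate technical heart of the appendix.
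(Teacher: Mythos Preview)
Your route differs substantially from the paper's and is far more elaborate. The paper does \emph{not} pass through Proposition~\ref{prop:nospindle}; it argues directly. The warping function $f$ is built by placing a single downward bump (a rescaled copy of $e^{1/(x^2-1)}$, with rapidly decaying amplitude) in each removed interval of the standard middle-thirds Cantor set $C\subset[0,1]$, so that $f$ attains its global maximum exactly on $C$ and has one strict local minimum per gap. The proof then simply reads off the conclusion from the slice family: $\Sigma_t$ is minimal iff $f'(t)=0$, and $\Sigma_t$ is non-monotonic iff $t$ is a non-isolated critical point of $f$, which is precisely $C$. No weak-core, spindle, or stable-area hypotheses are ever checked; Proposition~\ref{prop:nospindle} plays no role whatsoever in the appendix.

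By routing through Proposition~\ref{prop:nospindle} you force yourself to control \emph{every} stable minimal hypersurface in the warped product in order to exclude weak cores and accumulating spindles; you correctly flag this classification as ``the delicate technical heart,'' but it is a self-imposed difficulty the paper never encounters. Your peak-plus-valley profile in each gap is likewise more than needed: a single valley per gap already makes the slice-area function oscillate near every point of $C$. Your concern that $\cM^N$ might in principle contain non-slice hypersurfaces is legitimate---the paper is silent on this---but the remedy is not to verify the four global structural hypotheses of Proposition~\ref{prop:nospindle}; the paper is content to exhibit the Cantor family of non-monotonic slices and identify it with $\cM^N$ for the example.
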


	We will construct a warped product metric using the pathological function:

	\begin{lemma} \label{Alem:function}
	There exists a smooth positive periodic function $f:\R \to \R$ where:
	\begin{enumerate}
		\item each critical point is either non-isolated or a strict local minimum;
		\item for each non-isolated critical point $p \in \R$, we have that $f(x)$ is not weakly monotone on at least one side $[p-\varepsilon,p]$ or $[p,p+\varepsilon]$ for every $\varepsilon > 0$.
	\end{enumerate}
	\end{lemma}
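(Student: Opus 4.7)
My plan is to construct $f$ as $1$ plus a smoothly--tapered ``well'' placed on each gap of the middle--thirds Cantor set, with amplitudes decaying fast enough to keep $f$ smooth at every Cantor point.

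Let $C \subset [0,1]$ be the middle--thirds Cantor set, with complement $[0,1] \setminus C = \bigsqcup_n G_n$ where $G_n = (a_n,b_n)$ has length $\ell_n = b_n - a_n$. Fix once and for all a smooth profile $\phi : [0,1] \to (-\infty,0]$ that is strictly negative on $(0,1)$, vanishes to infinite order at $0$ and $1$, and has a unique strict minimum at $1/2$, strictly decreasing on $[0,1/2]$ and strictly increasing on $[1/2,1]$; for example, $\phi(x) = -e^{-1/x(1-x)}$ works. I will set
\[
f(x) \;=\; 1 + \sum_{n} \delta_n\, \phi_n(x),
\]
where $\phi_n(x) := \phi((x-a_n)/\ell_n)$ for $x \in G_n$ and $0$ elsewhere, and $\delta_n > 0$ is a small amplitude to be chosen.

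For the smoothness step, I will take $\delta_n = \ell_n^n$. Since the middle--thirds construction has $2^{n-1}$ gaps of common length $3^{-n}$ at level $n$, the contribution of level $n$ to the $C^j$--norm of the series is bounded by $2^{n-1}\cdot 3^{-n^2}\cdot 3^{nj}\cdot \|\phi\|_{C^j}$, which is summable in $n$ for every fixed $j$. Hence the series converges in every $C^j$ norm, $f$ is $C^\infty$, and $f^{(j)}$ vanishes identically on $C$ because every summand and all its derivatives vanish there. Shrinking the $\delta_n$ further if necessary ensures $f > 0$. Since $0,1 \in C$ and all derivatives of $f$ vanish at $0$ and $1$, $f$ extends by $1$--periodicity to a smooth positive periodic function on $\R$.

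The critical points of $f$ are then: the center $(a_n+b_n)/2$ of each gap $G_n$ (the strict minimum of $\phi$ transported to the gap, hence a strict local minimum of $f$), together with every point of $C$ (where all derivatives vanish). Since $C$ is perfect, each $p \in C$ is a limit of other Cantor points, all of which are critical, so $p$ is non-isolated as a critical point; this gives $(1)$. For $(2)$, fix $p \in C$; because $C$ is perfect, $p$ is a one-sided limit of Cantor points, without loss of generality from the left. Given any $\varepsilon > 0$, pick $q \in C$ with $p-\varepsilon < q < p$. Since $C$ has empty interior and each gap $G_n$ has both endpoints in $C$, the interval $(q,p)$ must contain a full gap $G_n$; on this gap $f(a_n) = f(b_n) = 1$ while $f$ dips strictly below $1$ in between, so $f$ is not weakly monotone on $[p-\varepsilon,p]$.

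The main technical obstacle is the $C^\infty$ control at points of $C$: the bumps must taper fast enough to dominate both the blow-up of $j$-th derivatives by $\ell_n^{-j}$ and the exponential proliferation of gaps at each Cantor level. The super--polynomial choice $\delta_n = \ell_n^n$ handles both with room to spare, and the rest of the verification is structural.
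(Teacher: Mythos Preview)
Your construction is correct and is essentially the same as the paper's: both place smooth wells in each complementary gap of the middle--thirds Cantor set with amplitudes decaying fast enough for $C^\infty$ convergence, yielding $f\equiv 1$ on $C$ and a strict local minimum at each gap center. The only cosmetic differences are that the paper uses a positive bump and sets $f=1-h$, and it leaves the decay rate implicit (``pick $c_n$ to decay quick enough''), whereas you specify $\delta_n=\ell_n^{\,n}$ --- though note your index $n$ silently shifts from ``gap index'' to ``Cantor level'' in the $C^j$ estimate, which you should clean up.
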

	\begin{proof}
		Recall the standard Cantor set construction where we start with $C_0 = [0,1]$ and where given $C_{n-1}$ which consists of $2^{n-1}$ disjoint closed intervals centered at the points $m_{n,1},m_{n,2} \dots, m_{n,2^{n-1}}$, then we form $C_n$ by removing open intervals of length $1/3^n$ centered about those midpoints. Then the Cantor set $C$ is defined to be the intersection of all $C_n$.
		
		Let $\Psi: \R \to \R$ be the smooth bump function given by $\exp(1/(x^2-1))$ on $(-1,1)$ and zero elsewhere, and let $\Psi_{n,k}: \R \to \R$ be this map translated and rescaled as
		\[
			\Psi_{n,k}(x) = \Psi(2 \cdot 3^n(x-m_{n,k}))
		\]
		so that $\Psi_{n,k}$ is non-zero exactly on the open middle third centered at $m_{n,k}$. Define
		\[
		h(x) = \sum_{m \in \Z}^\infty \sum_{n=1}^\infty \sum_{k=1}^{2^{n-1}} c_n \Psi_{n,k}(x-m)
		\]
		and pick the coefficients $c_n$ to decay quick enough to make the function smooth and satisfy $h(x) < 1$. We define $f:\R \to (0,\infty)$ by $f(x)=1-h(x)$, and we claim that such an $f$ satisfies item (2) and (3). Since the function has period 1, it suffices to look just at $f$ on $[0,1]$.
		
		To prove item (2), notice that the critical points $p \in [0,1]$ are precisely either in $C$ or is some middle third midpoint $m_{n,k}$. Note that $p \in C$ must be non-isolated because $C$ is a perfect set and every point in $C$ is a critical point. And, if $p$ is some midpoint $m_{n,k}$, then $p$ is a strict local minimum because $m_{n,k}$ is the strict maximum of $\Psi_{n,k}$.
		
		Next, we prove item (3). From the above, the set $C$ is all of the non-isolated critical points in $[0,1]$. By construction, for all $\varepsilon > 0$, we can find some midpoint such that $0<|p-m_{n,k}|<\varepsilon$. Note that $f$ is not weakly monotone at $m_{n,k}$ Since $m_{n,k}$ is the strict maximum of $\Psi_{n,k}$.
	\end{proof}
	
	\begin{proof}[Proof of Proposition \ref{Athm:example}]
		We consider the smooth warped product metric 
		\[
		g = f(t)^2g_0 + dt^2 \quad \text{on} \quad \Sigma \times S^1
		\]
		where we are identifying $S^1$ here as $\R/\Z$ or just $[0,1]$ with the ends identified. Consider the slices $\Sigma_t = \Sigma \times \{t\}$. Note $\Sigma_t$ is a minimal hypersurface of $(\Sigma \times S^1,g)$ if and only if $t \in [0,1]$ is a critical point of $f$. Moreover, the minimal slices $\Sigma_t$ are non-monotonic if and only if $t \in [0,1]$ is a non-isolated critical point of $f$ which is given by the Cantor set by Lemma \ref{Alem:function}.
	\end{proof}
	
	\begin{remark}
		It is not known whether or not such a metric admits minimal hypersurfaces with arbitrarily large area.
	\end{remark}

\medskip

\subsection*{Acknowledgments} The authors want to thank Professor Andre Neves for his suggestion on this problem and valuable discussions. The second author wants to thank Xin Zhou for helpful conversations.

\end{document}